\newcommand{\R}{\mathbf{R}}
\newcommand{\e}{\mathrm{e}}
\renewcommand{\P}{\mathrm{P}}
\newcommand {\E}{\mathrm{E}}
\renewcommand{\d}{\mathrm{d}}
\renewcommand{\phi}{\varphi}
\newcommand{\lip}{\mathrm{Lip}}
\renewcommand{\le}{\leqslant}
\renewcommand{\leq}{\leqslant}
\renewcommand{\ge}{\geqslant}
\renewcommand{\geq}{\geqslant}
\newtheorem{stat}{Statement}[section]
\newtheorem{proposition}[stat]{Proposition}
\newtheorem{corollary}[stat]{Corollary}
\newtheorem{theorem}[stat]{Theorem}
\newtheorem{lemma}[stat]{Lemma}
\theoremstyle{definition} 
\newtheorem{definition}[stat]{Definition}
\numberwithin{equation}{section}
\begin{document}

\title{Analysis of the gradient of the solution to a stochastic
	heat equation via fractional Brownian motion%
	\thanks{Research supported in part by NSF grants DMS-1006903 
	and DMS-1307470 (D.K. and P.M.), EPSRC grant EP/J017418/1 (M.F.),
	the Simons Foundation (Simons Visiting Professorship, D.K.), 
	Mathematisches Forschungsinstitut Oberwolfach (D.K.),
	and TU-Dresden (D.K.).}
}
\author{Mohammud Foondun\\Loughborough University
	\and Davar Khoshnevisan\\University of Utah
	\and Pejman Mahboubi\\University of Utah
}

\date{June 11, 2014}
\maketitle

\begin{abstract}
	Consider the stochastic partial differential equation
	$\partial_t u = Lu+\sigma(u)\xi$, where
	$\xi$ denotes space-time white noise and $L:=-(-\Delta)^{\alpha/2}$
	denotes the fractional Laplace operator of index $\alpha/2\in(\nicefrac12\,,1]$.
	We study the detailed behavior of the approximate spatial gradient
	$u_t(x)-u_t(x-\varepsilon)$ at fixed times $t>0$, as $\varepsilon\downarrow0$.
	We discuss a few applications of this work to the study of the sample functions
	of the solution to the KPZ equation as well.\\
	
	\noindent{\it Keywords:} The stochastic heat equation.\\
	
	\noindent{\it \noindent AMS 2010 subject classification:}
	Primary. 60H15, 60G17; Secondary. 60H10, 47B80.
\end{abstract}
\section{Introduction and main results}
We consider the stochastic partial differential equation
\begin{align}\label{SHE}
	\frac{\partial}{\partial t} u_t(x) =-(-\Delta)^{\alpha/2}
	u_t(x) + \sigma(u_t(x)) \,\xi,
	\tag{SHE}
\end{align}
where $\alpha\in(1\,,2]$ is a fixed ``spatial scaling''
parameter, $-(-\Delta)^{\alpha/2}$ denotes
the fractional Laplacian to the power $\alpha/2$, and
$\xi$ denotes space-time white noise. 
Throughout, we assume that the initial function $u_0$ 
is non random and bounded. 
We assume also that $\sigma$ is Lipschitz continuous; that is, 
$\lip_\sigma<\infty$, where 
\begin{equation}
	\lip_\sigma:=\sup_{-\infty<x<y<\infty}\left|
	\frac{\sigma(y)-\sigma(x)}{y-x}\right|.
\end{equation}
It is well known that, under the present conditions,  (SHE) has
a continuous solution that is uniquely defined by the moment
condition
\begin{equation}\label{eq:moments}
	\sup_{x\in\R}\sup_{t\in[0,T]}\E\left(\left| u_t(x)\right|^k\right)<\infty
	\qquad\text{for all $T>0$},
\end{equation}
for one, hence all, $k\in[2\,,\infty)$; see  \cite{DebbiDozzi}.

The main objective of this paper is to study the spatial gradient of the
random function $x\mapsto u_t(x)$ where the time parameter $t>0$
is held fixed. Our results will have the \emph{a priori} [unsurprising]
consequence that
$\partial u_t(x)/\partial x$ does not exist. Therefore, instead of
studying the gradient itself we consider an approximate un-normalized
gradient $u_t(x)-u_t(x-\varepsilon)$, where $\varepsilon\approx 0$.

Our main result is the following:
\begin{theorem}\label{th:Differentiate}
	For every fixed $t>0$,
	there exists an {\rm fBm} $F:=\{F(x)\}_{x\in\R}$ with Hurst index
	$H:=\frac12(\alpha-1)\in(0\,,\nicefrac12]$ such that 
	\begin{equation}\label{eq:RSDE}
		\lim_{\varepsilon\downarrow0}\sup_{x\in\R}\P\left\{
		\left| \frac{u_t(x) - u_t(x-\varepsilon)}{F(x)-F(x-\varepsilon)} - 
		\mathfrak{A}_\alpha\sigma(u_t(x))
		\right| > \lambda\right\} =0,
	\end{equation}
	for all $\lambda>0$, where $\mathfrak{A}_\alpha$ is the
	following numerical constant:
	\begin{equation}\label{A(alpha)}
		\mathfrak{A}_\alpha := \left\{ 2\Gamma(\alpha)\left\vert
		\cos\left(\alpha\pi/2\right)\right\vert\right\}^{-1/2}.
	\end{equation}
\end{theorem}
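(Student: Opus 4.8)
The plan is to work throughout with the mild (Walsh) formulation
\begin{equation*}
	u_t(x) = (p_t * u_0)(x) + \int_0^t\!\!\int_\R p_{t-s}(x-y)\,\sigma(u_s(y))\,\xi(\d s\,\d y),
\end{equation*}
where $p_t$ is the fractional heat kernel, characterized by $\widehat{p_t}(\zeta)=\e^{-t|\zeta|^\alpha}$. Since $p_t$ is smooth for $t>0$ and $u_0$ is bounded, the increment $(p_t*u_0)(x)-(p_t*u_0)(x-\varepsilon)$ of the deterministic term is $O(\varepsilon)$, hence negligible against $\varepsilon^H$ because $H\le\nicefrac12<1$. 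So everything reduces to the stochastic convolution increment $N_\varepsilon(x):=\int_0^t\!\int_\R[p_{t-s}(x-y)-p_{t-s}(x-\varepsilon-y)]\,\sigma(u_s(y))\,\xi(\d s\,\d y)$. I would isolate the corresponding \emph{linear} field $Z_t(x):=\int_0^t\!\int_\R p_{t-s}(x-y)\,\xi(\d s\,\d y)$ (the case $\sigma\equiv1$), whose spatial increment is $D_\varepsilon(x):=Z_t(x)-Z_t(x-\varepsilon)$; the whole argument then rests on comparing $N_\varepsilon$ with $\sigma(u_t(x))D_\varepsilon$, and $D_\varepsilon$ with an fBm increment.

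The field $x\mapsto Z_t(x)$ is stationary Gaussian, and Plancherel's theorem gives its spectral density $f_Z(\zeta)=(4\pi)^{-1}|\zeta|^{-\alpha}(1-\e^{-2t|\zeta|^\alpha})$. The substitution $\zeta\mapsto\zeta/\varepsilon$ in $\E(D_\varepsilon(x)^2)=\int_\R 4\sin^2(\varepsilon\zeta/2)\,f_Z(\zeta)\,\d\zeta$ then yields $\E(D_\varepsilon(x)^2)\sim\mathfrak{A}_\alpha^2\,\varepsilon^{2H}$, with precisely the constant \eqref{A(alpha)}: the limiting integral $\pi^{-1}\int_\R|\eta|^{-\alpha}\sin^2(\eta/2)\,\d\eta$ evaluates, via the Mellin transform of $1-\cos$ together with the reflection formula $\Gamma(\alpha)\Gamma(1-\alpha)=\pi/\sin(\pi\alpha)$, to $\{2\Gamma(\alpha)|\cos(\alpha\pi/2)|\}^{-1}=\mathfrak{A}_\alpha^2$. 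To manufacture the fBm itself I would enlarge the probability space by an independent centered Gaussian field $R$ with spectral density $(4\pi)^{-1}|\zeta|^{-\alpha}\e^{-2t|\zeta|^\alpha}$ and set $F:=\mathfrak{A}_\alpha^{-1}(Z_t+R)$. Since the two densities add to the exactly self-similar $(4\pi)^{-1}|\zeta|^{-\alpha}$, the process $F$ is a genuine fBm of Hurst index $H$ with $\E(|F(x)-F(x-\varepsilon)|^2)=\varepsilon^{2H}$ (exactly), while $\mathfrak{A}_\alpha(F(x)-F(x-\varepsilon))-D_\varepsilon(x)=R(x)-R(x-\varepsilon)=O(\varepsilon)$ in $L^2$ because $R$, having a rapidly decaying spectrum, is smooth. (Note that $F$ may, and does, depend on $t$, which the statement permits.)

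The crux is the localization step. Writing $N_\varepsilon(x)-\sigma(u_t(x))D_\varepsilon(x)=\int_0^t\!\int_\R[p_{t-s}(x-y)-p_{t-s}(x-\varepsilon-y)]\,[\sigma(u_s(y))-\sigma(u_t(x))]\,\xi(\d s\,\d y)$, I would estimate its moments by the Burkholder--Davis--Gundy inequality, bound $|\sigma(u_s(y))-\sigma(u_t(x))|\le\lip_\sigma|u_s(y)-u_t(x)|$, and invoke the standard space--time Hölder moment bounds $\E(|u_s(y)-u_t(x)|^k)\lesssim|t-s|^{k(\alpha-1)/(2\alpha)}+|x-y|^{kH}$. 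The squared kernel concentrates its mass on the region $t-s\lesssim\varepsilon^\alpha$, $|x-y|\lesssim\varepsilon$, where both terms are $\lesssim\varepsilon^{2H}$; combined with $\int_0^t\!\int_\R[\cdots]^2\,\d y\,\d s\sim\mathfrak{A}_\alpha^2\varepsilon^{2H}$ this gives $\E(|N_\varepsilon(x)-\sigma(u_t(x))D_\varepsilon(x)|^2)\lesssim\varepsilon^{4H}$, uniformly in $x$. The error thus has standard deviation $O(\varepsilon^{2H})=o(\varepsilon^H)$, one factor $\varepsilon^H$ below the main term. Controlling the kernel's tails outside the concentration region and propagating the bound to the $k$-th moment (needed for uniformity in $x$) is the main technical obstacle.

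Finally I would assemble the pieces. The numerator $[u_t(x)-u_t(x-\varepsilon)]-\mathfrak{A}_\alpha\sigma(u_t(x))[F(x)-F(x-\varepsilon)]$ is the sum of the $O(\varepsilon)$ deterministic increment, the localization error of order $\varepsilon^{2H}$, and $\sigma(u_t(x))(R(x)-R(x-\varepsilon))=O(\varepsilon)$, so its second moment is $O(\varepsilon^{4H})$ uniformly in $x$. To pass to the ratio I would use the elementary inclusion $\{|A/B|>\lambda\}\subseteq\{|A|>\lambda\delta\varepsilon^H\}\cup\{|B|<\delta\varepsilon^H\}$, valid irrespective of any dependence between numerator $A$ and denominator $B$. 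Chebyshev's inequality bounds the first event by $C\varepsilon^{4H}/(\lambda\delta\varepsilon^H)^2=C\varepsilon^{2H}/(\lambda\delta)^2\to0$, while Gaussian anti-concentration bounds the second by $\P\{|\mathcal N(0,1)|<\delta\}\le C\delta$, uniformly in $x$ and $\varepsilon$ because $F(x)-F(x-\varepsilon)\sim\mathcal N(0,\varepsilon^{2H})$. Hence $\limsup_{\varepsilon\downarrow0}\sup_{x}\P\{\cdots>\lambda\}\le C\delta$ for every $\delta>0$, and letting $\delta\downarrow0$ gives \eqref{eq:RSDE}. The uniformity in $x$ is supplied throughout by stationarity of the Gaussian increments and by the $x$-uniform moment bound \eqref{eq:moments}.
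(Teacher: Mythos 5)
Your architecture mirrors the paper's --- linearize around the additive-noise field $Z_t$, complete $Z_t$ to an exact fBm, compare the nonlinear increment with $\sigma(u_t(x))D_\varepsilon(x)$, then pass to the ratio via Chebyshev plus Gaussian anti-concentration --- and two of your three blocks are sound: your spectral completion $F:=\mathfrak{A}_\alpha^{-1}(Z_t+R)$ is distributionally identical to the paper's Proposition 3.1 (which realizes the same complementary smooth field as a Wiener integral of the \emph{same} noise over $(t,\infty)\times\R$, rather than adjoining it), and your closing $\delta$-splitting argument is a correct version of the paper's final step. The gap is in the crux step, and it is twofold. First, the stochastic integral you propose to feed into Burkholder--Davis--Gundy is not a Walsh integral: the integrand $\sigma(u_s(y))-\sigma(u_t(x))$ contains $\sigma(u_t(x))$, which is measurable with respect to the noise up to time $t$ and hence is \emph{not} predictable on $(0,t)\times\R$. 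Your displayed identity is only shorthand for $N_\varepsilon(x)-\sigma(u_t(x))\cdot D_\varepsilon(x)$, a product of two correlated random variables, and BDG gives nothing for such a product (an anticipating/Skorokhod interpretation would produce a Malliavin correction term that your estimate ignores). This is precisely the difficulty the paper's Lemma 5.1 is built to evade: it localizes to the box $\mathbf{B}_\beta(x,t;\varepsilon)=[t-\beta\varepsilon^\alpha,t]\times[x-\gamma\varepsilon,x+\gamma\varepsilon]$, replaces $\sigma(u_t(\tilde x))$ by $\sigma(u_{t-\beta\varepsilon^\alpha}(\tilde x))$ --- which \emph{is} adapted relative to the noise inside the box, so BDG applies --- and then pays separately, via Cauchy--Schwarz and the temporal modulus of continuity (Proposition 2.4), for swapping $u_{t-\beta\varepsilon^\alpha}(\tilde x)$ back to $u_t(\tilde x)$.

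Second, your quantitative claim $\E\left(|N_\varepsilon(x)-\sigma(u_t(x))D_\varepsilon(x)|^2\right)\lesssim\varepsilon^{4H}$ does not follow from the fixed-window concentration heuristic. If the window is a constant multiple of $(\varepsilon^\alpha,\varepsilon)$, the kernel's $L^2$ mass outside it is a \emph{constant fraction} of the total $\varepsilon^{2H}$ (Proposition 4.1 quantifies it as $\lesssim\varepsilon^{2H}\beta^{-1/2}$), and out there the factor $\sigma(u_s(y))-\sigma(u_t(x))$ is $O(1)$, not $o(1)$; so the tail error is of order $\varepsilon^{H}\beta^{-1/4}$ and is $o(\varepsilon^{H})$ only if $\beta\to\infty$ as $\varepsilon\downarrow0$. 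But growing windows inflate the interior modulus cost from $\varepsilon^{2H}$ to roughly $\beta^{3H/2}\varepsilon^{2H}$ (this is the content of Lemma 5.1, whose spatial window must be as wide as $\gamma\varepsilon$ with $\gamma\approx\beta^{3/2}$ to control the heavy tails of the stable kernel). Balancing the two errors --- the paper takes $\beta=\varepsilon^{-2b}$ with $b=(\alpha-1)/(3\alpha-2)$ --- yields only $\|N_\varepsilon(x)-\sigma(u_t(x))D_\varepsilon(x)\|_2\lesssim\varepsilon^{H+b/2}$, much weaker than your $\varepsilon^{2H}$ (for $\alpha=2$: exponent $\nicefrac58$ versus $1$). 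Fortunately any $o(\varepsilon^{H})$ bound suffices for your final assembly, so the theorem does not need your strong claim; but as written your sketch establishes neither the strong bound nor the weaker sufficient one, because the tail-versus-interior trade-off and its optimization --- the actual heart of the paper's proof --- is exactly the step you defer as ``the main technical obstacle'' and then assert.
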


Theorem \ref{th:Differentiate} basically asserts that $x\mapsto u_t(x)$
solves the following rough stochastic differential equation:
$$
	\frac{\d u_t(x) }{\d x}
	= \mathfrak{A}_\alpha\sigma(u_t(x))\,
	\frac{\d F(x)}{\d x}\qquad(x\in\R),
	\eqno{\textnormal{(R-SDE)}}
$$
where $F$ denotes an fBm  with Hurst index $H:=\frac12(\alpha-1)\in(0\,,\nicefrac12]$.
It has recently been shown in \cite{KSXZ}, using other methods,
that the \emph{temporal} approximate gradient of 
(SHE) solves a rough differential equation that
is driven by a fractional Brownian motion
with Hurst index $H/\alpha\in(0\,,\nicefrac14]$. 

When $H\in(\nicefrac12\,,1]$, the
random function $F$ is smoother than a H\"older-continuous
function of index $>\nicefrac12$, and hence (R-SDE) can be solved
by using classical theory of Young integrals. Therefore, Theorem
\ref{th:Differentiate} shows the existence of solutions to all
remaining possible rough differential equations that are driven by
fractional Brownian motion. However, we caution that (R-SDE)
is an anticipative stochastic differential equation, even in the case
that $\alpha=2$ where $F$ simplifies to a standard Brownian motion.

Stochastic differential equations
such as (R-SDE) have been the subject of intense recent activity
\cite{AlosLeonNualart,AlosMazetNualart,CoutinFritzVictoir,%
ErramiRusso,FritzVictoir2010a,GradinaruRussoVallois,Gubinelli,KSXZ,%
LyonsQian,Lyons:95,Lyons:94,NualartTindel,RussoVallois,Unterberger}.
As such, Theorem \ref{th:Differentiate} can be viewed
as a non-trivial contribution to the existence theory of very rough 
[anticipative] stochastic 
differential equations. More significantly, the proof of Theorem \ref{th:Differentiate} also 
teaches us about the local structure of
the stochastic heat equation (SHE). For instance, one can conclude fairly easily
from the method we employed in the proof of Theorem \ref{th:Differentiate} a local law of the iterated logarithm, as the following suggests.

\begin{corollary}\label{cor:LIL}
	Choose and fix $t>0$ and $x\in\R$. Then with probability one,
	\[
		\limsup_{\varepsilon\downarrow 0}\frac{u_t(x)-u_t(x-\varepsilon)}{
		\sqrt{2\varepsilon^{\alpha-1}\log\log(1/\varepsilon)}}
		=-\liminf_{\varepsilon\downarrow 0}\frac{u_t(x)-u_t(x-\varepsilon)}{
		\sqrt{2\varepsilon^{\alpha-1}\log\log(1/\varepsilon)}}
		= \mathfrak{A}_\alpha |\sigma(u_t(x))|.
	\]
\end{corollary}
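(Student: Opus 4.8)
The plan is to prove the local law of the iterated logarithm (LIL) stated in Corollary~\ref{cor:LIL} by transferring the classical LIL for fractional Brownian motion through the asymptotic identity established in Theorem~\ref{th:Differentiate}. First I would recall Chung's (or the standard Lévy-type) local law of the iterated logarithm for an fBm $F$ with Hurst index $H=\tfrac12(\alpha-1)$: for a fixed point $x$, almost surely
\[
	\limsup_{\varepsilon\downarrow0}\frac{F(x)-F(x-\varepsilon)}{\sqrt{2\varepsilon^{2H}\log\log(1/\varepsilon)}}=1,
\]
with the corresponding $\liminf$ equal to $-1$ by symmetry. Since $2H=\alpha-1$, the normalizing denominator $\sqrt{2\varepsilon^{2H}\log\log(1/\varepsilon)}$ is precisely $\sqrt{2\varepsilon^{\alpha-1}\log\log(1/\varepsilon)}$, which is exactly the normalization appearing in the corollary. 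Thus the increment $F(x)-F(x-\varepsilon)$ already has the right fluctuation scale, and the whole task reduces to showing that dividing $u_t(x)-u_t(x-\varepsilon)$ by this scale multiplies the fBm LIL constant $1$ by the factor $\mathfrak{A}_\alpha|\sigma(u_t(x))|$.

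The key step is therefore a multiplicative comparison. I would write
\[
	\frac{u_t(x)-u_t(x-\varepsilon)}{\sqrt{2\varepsilon^{\alpha-1}\log\log(1/\varepsilon)}}
	= \frac{u_t(x)-u_t(x-\varepsilon)}{F(x)-F(x-\varepsilon)}\cdot
	\frac{F(x)-F(x-\varepsilon)}{\sqrt{2\varepsilon^{\alpha-1}\log\log(1/\varepsilon)}},
\]
and argue that, along the sequence of $\varepsilon$ realizing the $\limsup$ of the second factor, the first factor converges to the deterministic constant $\mathfrak{A}_\alpha\sigma(u_t(x))$. The right-hand LIL constant then emerges as the product $\mathfrak{A}_\alpha\sigma(u_t(x))\cdot 1$, and the absolute value appears because the sign of $F(x)-F(x-\varepsilon)$ oscillates, so positive increments feed the $\limsup$ while negative ones feed the $\liminf$. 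Since $\sigma(u_t(x))$ is a single fixed random variable (once $t$ and $x$ are fixed) that factors out, the product of the random constant $\mathfrak{A}_\alpha\sigma(u_t(x))$ with the $\pm1$ fBm LIL constants yields exactly the stated $\pm\mathfrak{A}_\alpha|\sigma(u_t(x))|$.

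The main obstacle is upgrading the convergence in Theorem~\ref{th:Differentiate} to what the LIL actually needs. Theorem~\ref{th:Differentiate} only gives convergence in probability, and moreover it is stated uniformly over $x$ but only as $\varepsilon\downarrow0$ continuously; a genuine almost-sure statement along a fixed $x$ requires controlling the ratio simultaneously along the (random) subsequence $\varepsilon_n\downarrow0$ that achieves the fBm $\limsup$. The clean way around this is to extract from the proof of Theorem~\ref{th:Differentiate} the stronger quantitative estimate it is built on---typically a bound of the form $\E|u_t(x)-u_t(x-\varepsilon)-\mathfrak{A}_\alpha\sigma(u_t(x))(F(x)-F(x-\varepsilon))|^k \le C\varepsilon^{k(H+\delta)}$ for some small $\delta>0$---and then deploy a Borel--Cantelli argument along a geometric subsequence $\varepsilon_n=\theta^n$ together with a chaining/monotonicity argument to fill the gaps. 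I would therefore phrase the corollary's proof as: (i) invoke the fBm LIL for the denominator; (ii) use the moment bound underlying Theorem~\ref{th:Differentiate} plus Borel--Cantelli to show that the error term, once divided by $\sqrt{2\varepsilon^{\alpha-1}\log\log(1/\varepsilon)}$, tends almost surely to zero along $\varepsilon_n=\theta^n$; (iii) control the oscillation between consecutive $\varepsilon_n$ by a standard continuity estimate and let $\theta\uparrow1$; and (iv) combine to read off the claimed $\limsup$ and $\liminf$. The phrase \emph{one can conclude fairly easily} in the excerpt signals that the authors expect this transfer to be routine given the machinery in the Theorem's proof, so I would keep the argument brief and lean on the quantitative estimates already developed there.
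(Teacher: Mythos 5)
Your proposal is correct and is essentially the paper's own argument: the paper likewise reduces the corollary to the almost-sure strong approximation $\left|(\nabla_\varepsilon u_t)(x)-\mathfrak{A}_\alpha\sigma(u_t(x))(\nabla_\varepsilon F)(x)\right|=o\left(\varepsilon^{(\alpha-1+q)/2}\right)$ for $q\in(0,b)$, obtained from exactly the moment bound you postulate (the estimate \eqref{eq:E(error^k)} from the proof of Theorem \ref{th:Differentiate}, with $\delta=b/2$ in your notation), and then concludes by invoking the law of the iterated logarithm for fractional Brownian motion via Qualls--Watanabe. The only difference is packaging: where you run Borel--Cantelli along geometric scales plus a chaining/gap-filling step, the paper gets the same uniform almost-sure control in one stroke by applying Kolmogorov's continuity theorem to the auxiliary process $G(\varepsilon):=u_t(x-\varepsilon)-\mathfrak{A}_\alpha\sigma(u_t(x))F(x-\varepsilon)$, whose increments are controlled by that same moment estimate.
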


One cannot improve the convergence-in-probability assertion in
\eqref{eq:RSDE} 
to a statement with almost-sure convergence. This is because
there are infinitely-many small random values of $\varepsilon>0$ such that 
the denominator $F(x)-F(x-\varepsilon)$ of the difference quotient in
\eqref{eq:RSDE} is zero. However, the following almost-sure-in-density
improvement does hold:
\begin{corollary}\label{a.s.}
	Fix $t>0$ and $x\in\R$. With probability one,
	\begin{equation}\label{eq:a.s}
		\lim_{s\to0}\frac{1}{s}\int_0^s\left|\frac{u_t(x)-u_t(x-\varepsilon)}
		{F(x)-F(x-\varepsilon)}-\mathfrak{A}_\alpha\sigma(u_t(x))\right|\d\varepsilon
		=0.
	\end{equation}
\end{corollary}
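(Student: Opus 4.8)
The plan is to upgrade the convergence-in-probability statement of Theorem~\ref{th:Differentiate} to an almost-sure Cesàro-type (density) statement by controlling moments of the difference quotient and appealing to a Fubini-plus-Borel--Cantelli argument. Write
\[
	\Phi(\varepsilon) := \left|\frac{u_t(x)-u_t(x-\varepsilon)}
	{F(x)-F(x-\varepsilon)}-\mathfrak{A}_\alpha\sigma(u_t(x))\right|,
\]
so that the goal \eqref{eq:a.s} is to show $s^{-1}\int_0^s\Phi(\varepsilon)\,\d\varepsilon\to0$ almost surely as $s\downarrow0$. The first step is to quantify the convergence in Theorem~\ref{th:Differentiate}: I expect that the proof of that theorem in fact yields a moment bound of the form $\E\bigl[\min(\Phi(\varepsilon)\,,1)\bigr]\le \eta(\varepsilon)$ for some explicit deterministic function $\eta$ with $\eta(\varepsilon)\downarrow0$ as $\varepsilon\downarrow0$ (the truncation by $1$ is what handles the small set where the fBm denominator is near zero, which is precisely the obstruction to almost-sure convergence of the quotient itself).

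Granting such a quantitative bound, the second step is to pass to the integrated quantity. Since $\Phi\ge0$, by Tonelli's theorem
\begin{equation*}
	\E\left[\frac1s\int_0^s\min(\Phi(\varepsilon)\,,1)\,\d\varepsilon\right]
	=\frac1s\int_0^s\E\bigl[\min(\Phi(\varepsilon)\,,1)\bigr]\,\d\varepsilon
	\le\frac1s\int_0^s\eta(\varepsilon)\,\d\varepsilon,
\end{equation*}
and the right-hand side tends to $0$ as $s\downarrow0$ because $\eta(\varepsilon)\to0$ and Cesàro averages of a function converging to $0$ also converge to $0$. Thus the averaged process converges to $0$ in $\L^1$, hence along a deterministic sequence $s_n\downarrow0$ one gets almost-sure convergence to $0$ after extracting a fast-decaying subsequence (for instance $s_n$ chosen so that the $\L^1$ norms are summable) and invoking Borel--Cantelli. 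The third step is the interpolation between the discrete times $s_n$ and a continuous limit: because $\varepsilon\mapsto\min(\Phi(\varepsilon)\,,1)$ is nonnegative, the map $s\mapsto\int_0^s\min(\Phi(\varepsilon)\,,1)\,\d\varepsilon$ is nondecreasing, and a standard monotonicity sandwich between consecutive $s_{n+1}\le s\le s_n$ (arranging $s_n/s_{n+1}$ bounded) upgrades the subsequential statement to the full limit $s\downarrow0$.

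The final step is to remove the truncation. Since $\min(\Phi(\varepsilon)\,,1)=\Phi(\varepsilon)$ whenever $\Phi(\varepsilon)\le1$, it suffices to show that the contribution of the set $\{\varepsilon\in(0,s):\Phi(\varepsilon)>1\}$ to the average $s^{-1}\int_0^s\Phi\,\d\varepsilon$ is negligible. This is where the main obstacle lies: on the bad set the integrand $\Phi$ can be arbitrarily large precisely because $F(x)-F(x-\varepsilon)$ can vanish, so one cannot naively control $\int\Phi\,\mathbf{1}_{\{\Phi>1\}}$ by a first-moment bound. I would handle this by exploiting the explicit structure of the difference quotient: the numerator $u_t(x)-u_t(x-\varepsilon)$ and the denominator $F(x)-F(x-\varepsilon)$ should be comparable in a refined sense (both are asymptotically governed by the same small-scale Gaussian behavior established in the proof of Theorem~\ref{th:Differentiate}), so the ratio itself has integrable tails in $\varepsilon$ after averaging, even though it fails to be pointwise bounded. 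Concretely, I expect to prove a bound $\E\bigl[s^{-1}\int_0^s\Phi(\varepsilon)\,\mathbf{1}_{\{\Phi(\varepsilon)>1\}}\,\d\varepsilon\bigr]\to0$ by using a second-moment or $\L^p$ estimate on the numerator together with the fact that the density of zeros of $\varepsilon\mapsto F(x)-F(x-\varepsilon)$ contributes only an $o(1)$ fraction of the interval $(0,s)$; the self-similarity and local nondeterminism of fBm are the natural tools for the latter. Combining the truncated density convergence with this negligibility of the large-values set yields \eqref{eq:a.s}.
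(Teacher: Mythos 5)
Your Steps 1--3 (truncation at level $1$, Tonelli, Borel--Cantelli along a summable subsequence, and the monotonicity sandwich) are sound in outline, and the dyadic-plus-monotonicity interpolation is in fact the same device the paper uses at the end of its own proof. The genuine gap is Step 4, and it is not a technical gap but a fatal one: the quantity you propose to bound, $\E\bigl[s^{-1}\int_0^s\Phi(\varepsilon)\,\1_{\{\Phi(\varepsilon)>1\}}\,\d\varepsilon\bigr]$, is infinite for every $s>0$, so no refinement of that estimate can succeed. Indeed, writing $(\nabla_\varepsilon u_t)(x)=\mathfrak{A}_\alpha\sigma(u_t(x))(\nabla_\varepsilon F)(x)+E_\varepsilon$, one has $\Phi(\varepsilon)=|E_\varepsilon|/|(\nabla_\varepsilon F)(x)|$, and the error $E_\varepsilon$ does \emph{not} degenerate on the event that the denominator is near zero: it contains, for instance, the contribution $\sigma(u_t(x))(\nabla_\varepsilon S)(x)$ of the smooth Gaussian process $S$ of Proposition \ref{pr:fBm}, which is not perfectly correlated with $(\nabla_\varepsilon F)(x)$. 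Hence the conditional expectation of $|E_\varepsilon|$ given $(\nabla_\varepsilon F)(x)=d$ stays bounded away from zero for small $|d|$, and since $\E(1/|\mathcal N|)=\infty$ for a Gaussian random variable, one gets $\E[\Phi(\varepsilon)]=\infty$ (this is already so in the additive case $\sigma\equiv1$, which any valid method must handle). Consequently $\E[\Phi(\varepsilon)\1_{\{\Phi(\varepsilon)>1\}}]\ge\E[\Phi(\varepsilon)]-1=\infty$, and by Tonelli your Step 4 expectation equals $+\infty$ for each $s$. You half-recognized the issue (``one cannot naively control $\int\Phi\,\1_{\{\Phi>1\}}$ by a first-moment bound''), but the tools you then invoke (second moments of the numerator, the density of zeros of fBm, local nondeterminism) cannot rescue an $L^1(\P)$ bound on a random variable that is simply not integrable; only a pathwise, not an in-expectation, treatment of the bad set can work.

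The two ideas that make the proof go through, and which your proposal is missing, are: (i) control the numerator \emph{pathwise before dividing} --- the paper first proves the strong approximation \eqref{eq:SA}, namely that $|(\nabla_\varepsilon u_t)(x)-\mathfrak{A}_\alpha\sigma(u_t(x))(\nabla_\varepsilon F)(x)|=o(\varepsilon^{(\alpha-1+q)/2})$ a.s.\ as $\varepsilon\downarrow0$ for $q\in(0,b)$, obtained from the uniform moment bound \eqref{eq:E(error^k)} via the Kolmogorov continuity theorem applied to $G(\varepsilon)=u_t(x-\varepsilon)-\mathfrak{A}_\alpha\sigma(u_t(x))F(x-\varepsilon)$; this reduces the corollary to controlling the purely Gaussian functional $\int_0^s\varepsilon^{(\alpha-1+q)/2}|(\nabla_\varepsilon F)(x)|^{-1}\,\d\varepsilon$; and (ii) use \emph{fractional} moments of order $r\in(0,1)$ on that functional: since $\E(|\mathcal N|^{-r})<\infty$ precisely when $r<1$, the paper obtains $\E(|X_s|^r)\le c\,s^{r(2+q)/2}$, and then Chebyshev, Borel--Cantelli along $s=2^{-n}$, and monotonicity of $s\mapsto X_s$ finish the argument. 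In short, the obstruction you correctly identified (zeros of the fBm increment in the denominator) is overcome not by any expectation bound on the untruncated ratio, but by an almost-sure bound on the numerator combined with sub-unit negative moments of the Gaussian denominator; as it stands, your Steps 1--3 prove only the weaker truncated statement $\lim_{s\to0}s^{-1}\int_0^s\min(\Phi(\varepsilon),1)\,\d\varepsilon=0$.
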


We can also deduce an interesting ``central limit theorem,'' whose limit
law is a mixture of mean-zero normal distributions.

\begin{corollary}\label{cor:CLT}
	Choose and fix $t>0$ and $x\in\R$. Then, for all $a\in\R$,
	\begin{equation}
		\lim_{\varepsilon\downarrow 0}\P\left\{
		\frac{u_t(x)-u_t(x-\varepsilon)}{\varepsilon^{(\alpha-1)/2}} \le a\right\}
		= \P\left\{ |\sigma(u_t(x))| \times
		\mathcal{N}\le \frac{a}{\mathfrak{A}_\alpha}\right\},
	\end{equation}
	where $\mathcal{N}$ denotes a standard Gaussian random variable, independent
	of $u_t(x)$.
\end{corollary}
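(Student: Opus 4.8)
The plan is to combine Theorem~\ref{th:Differentiate} with Slutsky's theorem and one genuinely new ingredient, an \emph{asymptotic independence} fact. Fix $t>0$ and $x\in\R$ and set
\[
	Q_\varepsilon:=\frac{u_t(x)-u_t(x-\varepsilon)}{F(x)-F(x-\varepsilon)},
	\qquad
	G_\varepsilon:=\frac{F(x)-F(x-\varepsilon)}{\varepsilon^{(\alpha-1)/2}},
\]
so that the random variable appearing in the Corollary equals $Q_\varepsilon G_\varepsilon$. Theorem~\ref{th:Differentiate}, applied at the single point $x$, gives $Q_\varepsilon\to\mathfrak{A}_\alpha\sigma(u_t(x))$ in probability. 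Since $F$ is an {\rm fBm} of Hurst index $H=\tfrac12(\alpha-1)$, we have $\E[(F(x)-F(x-\varepsilon))^2]=\varepsilon^{2H}=\varepsilon^{\alpha-1}$, so that $G_\varepsilon\sim\mathcal{N}(0,1)$ for \emph{every} $\varepsilon>0$; in particular $\{G_\varepsilon\}_{\varepsilon>0}$ is tight. Decomposing $Q_\varepsilon G_\varepsilon=\mathfrak{A}_\alpha\sigma(u_t(x))G_\varepsilon+(Q_\varepsilon-\mathfrak{A}_\alpha\sigma(u_t(x)))G_\varepsilon$, the second summand is a tight family multiplied by a sequence converging to $0$ in probability, hence is itself $o_{\mathrm{P}}(1)$. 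First I would therefore reduce the problem to identifying the limiting law of $\mathfrak{A}_\alpha\sigma(u_t(x))G_\varepsilon$.

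The crux is the joint convergence $(u_t(x),G_\varepsilon)\Rightarrow(u_t(x),\mathcal{N})$ with $\mathcal{N}$ standard Gaussian and \emph{independent} of $u_t(x)$. Granting this, the continuous mapping theorem gives $\mathfrak{A}_\alpha\sigma(u_t(x))G_\varepsilon\Rightarrow\mathfrak{A}_\alpha\sigma(u_t(x))\mathcal{N}$; and since $\mathcal{N}$ is symmetric and independent of $u_t(x)$, the product $\sigma(u_t(x))\mathcal{N}$ has the same law as $|\sigma(u_t(x))|\mathcal{N}$, whence $\P\{\mathfrak{A}_\alpha\sigma(u_t(x))\mathcal{N}\le a\}=\P\{|\sigma(u_t(x))|\,\mathcal{N}\le a/\mathfrak{A}_\alpha\}$, which is exactly the right-hand side. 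To prove the asymptotic independence I would use the fact that the proof of Theorem~\ref{th:Differentiate} couples $F$ to the driving noise so that, to leading order as $\varepsilon\downarrow0$, $F(x)-F(x-\varepsilon)$ agrees with $\mathfrak{A}_\alpha^{-1}$ times the Wiener integral $I_\varepsilon:=\int_0^t\!\int_\R[G_{t-s}(x-y)-G_{t-s}(x-\varepsilon-y)]\,\xi(\d s\,\d y)$, where $G$ is the fractional heat kernel; it then suffices to prove that $I_\varepsilon/(\mathrm{Var}\,I_\varepsilon)^{1/2}$ is asymptotically independent of $u_t(x)$.

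The mechanism is scale separation in time. Fix a large constant $K$, put $s_0:=t-K\varepsilon^{\alpha}$, and let $\mathcal{F}_{s_0}$ be the $\sigma$-field generated by $\xi$ on $[0,s_0]\times\R$. From the mild formulation of (SHE), the contribution to $u_t(x)$ of the noise on $(s_0,t]$ has squared $L^2(\P)$-norm at most a constant times $\int_0^{K\varepsilon^{\alpha}}\|G_r\|_{L^2(\R)}^2\,\d r\asymp K^{1-1/\alpha}\varepsilon^{\alpha-1}$, which tends to $0$ as $\varepsilon\downarrow0$ because $\alpha>1$; hence $u_t(x)$ is approximated in $L^2(\P)$ by an $\mathcal{F}_{s_0}$-measurable random variable (the mild solution truncated to $[0,s_0]$). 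Splitting $I_\varepsilon$ at time $s_0$, the heat-kernel bound $\varepsilon^2\int_{K\varepsilon^{\alpha}}^{t}\|\partial_z G_r\|_{L^2(\R)}^2\,\d r\asymp K^{1-3/\alpha}\varepsilon^{\alpha-1}$ shows that the $\mathcal{F}_{s_0}$-measurable (pre-$s_0$) part of $I_\varepsilon$ carries only a fraction $O(K^{1-3/\alpha})$ of the total variance $\asymp\varepsilon^{\alpha-1}$, a fraction that vanishes as $K\to\infty$ since $1-3/\alpha<0$. The complementary (post-$s_0$) part is a Wiener integral over $(s_0,t]\times\R$, hence Gaussian and independent of $\mathcal{F}_{s_0}$. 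Computing $\E[f(u_t(x))\exp(\mathrm{i}\theta G_\varepsilon)]$ for bounded continuous $f$ by conditioning on $\mathcal{F}_{s_0}$, replacing $u_t(x)$ and $G_\varepsilon$ by these approximants, and letting first $\varepsilon\downarrow0$ and afterwards $K\to\infty$, factorizes the limit as $\E[f(u_t(x))]\,\e^{-\theta^2/2}$, which is precisely the desired independence.

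The hard part will be this final step. Unlike $u_t(x)$, whose dependence on the noise near time $t$ is negligible, the fine increment $I_\varepsilon$ is \emph{not} concentrated in time at $s=t$: its variance receives comparable contributions from the entire window $t-s\lesssim\varepsilon^{\alpha}$, so the two localization estimates above must be carried out with explicit control of the constants to guarantee that the $\mathcal{F}_{s_0}$-measurable fraction of $I_\varepsilon$ really does tend to $0$ uniformly in small $\varepsilon$ as $K$ grows. The remaining care lies in justifying the interchange of the $\varepsilon\downarrow0$ and $K\to\infty$ limits, which I expect to handle by a routine $\limsup$ estimate, using that the quantity $\E[f(u_t(x))\exp(\mathrm{i}\theta G_\varepsilon)]$ does not depend on $K$.
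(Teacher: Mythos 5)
Your proposal is correct, but it takes a genuinely different route from the paper's own proof. The paper does not invoke Theorem \ref{th:Differentiate} as a black box; instead it re-enters the localization machinery: setting $\beta:=\log(1/\varepsilon)$ and $\mathcal{I}_\varepsilon(x,t):=\int_{\mathbf{B}_\beta(x,t;\varepsilon)}(\nabla_\varepsilon p_{t-s})(y-x)\,\xi(\d s\,\d y)$, it combines Proposition \ref{pr:localize:Z}, Corollary \ref{cor:localize:u}, Lemma \ref{lem:localize:u:1} and Proposition \ref{pro:Modulus:t} to prove \eqref{eq:localize:u:2}, namely that $(\nabla_\varepsilon u_t)(x)$ equals $\sigma(u_{t-\beta\varepsilon^\alpha}(x))\,\mathcal{I}_\varepsilon(x,t)$ up to an $L^k$-error of order $\varepsilon^{(\alpha-1)/2}|\log(1/\varepsilon)|^{-1/4}$. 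Freezing the coefficient at time $t-\beta\varepsilon^\alpha$ makes it \emph{exactly} independent of the Gaussian variable $\mathcal{I}_\varepsilon(x,t)$ (disjoint time slices of the noise), while it still converges a.s.\ to $\sigma(u_t(x))$; the mixture law then follows in a single limit after replacing $\mathcal{I}_\varepsilon$ by $(\nabla_\varepsilon Z_t)(x)$ and then by $\mathfrak{A}_\alpha(\nabla_\varepsilon F)(x)$. You instead keep $u_t(x)$ intact, reduce via Slutsky to asymptotic independence of the pair $(u_t(x),G_\varepsilon)$, and prove that by cutting both the solution and the Gaussian increment at $s_0=t-K\varepsilon^\alpha$, with a double limit ($\varepsilon\downarrow0$, then $K\to\infty$) and a characteristic-function factorization. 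The underlying mechanism---temporal scale separation plus independence of white noise over disjoint time intervals---is the same in both proofs, and your two variance bounds are exactly of the type the paper establishes (your pre-$s_0$ bound for $I_\varepsilon$ is the $Q_1$ estimate inside the proof of Proposition \ref{pr:localize:Z}, with $\beta=K$). What your route buys: you never need the spatial part of the localization (the paper's $Q_2$) nor Lemma \ref{lem:localize:u:1}, and the asymptotic-independence lemma is of independent interest. What the paper's route buys: exact rather than asymptotic independence, a single limit instead of two, and the quantitative rate \eqref{eq:localize:u:2}, which is reused verbatim in the proof of Corollary \ref{cor:VAR}. One caveat you share with the paper: weak convergence identifies the limiting distribution function only at its continuity points, so if $\P\{\sigma(u_t(x))=0\}>0$ the limit law has an atom at the origin and the case $a=0$ would require separate treatment in either argument.
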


Let us also mention a corollary about the variations of the solution to (SHE).
This result extends the recent work of Posp\`\i{}\v{s}il and Tribe \cite{PospisilTribe}.

\begin{corollary}\label{cor:VAR}
	If $\phi:\R\to\R$ is Lipschitz continuous, then
	for all non random reals $b>a$ and $t>0$,
	\begin{equation}\begin{split}
		&\lim_{n\to\infty} \sum_{a2^n\le j \le b2^n} \phi( u_t
			(j 2^{-n})) \left| u_t( (j+1)2^{-n})- u_t\left( j2^{-n}\right)
			\right|^{2/(\alpha-1)}\\
		&\hskip1.7in
			=\mathfrak{B}_\alpha \int_a^b \phi(u_t(x))
			\left[ \sigma(u_t(x))\right]^{2/(\alpha-1)}\d x,
	\end{split}\end{equation}
	almost surely and in $L^2(\P)$, where $\mathfrak{B}_\alpha$ is
	the following numerical constant:
	\begin{equation}\label{B(alpha)}
		\mathfrak{B}_\alpha
		:= \frac{1}{\sqrt\pi}\left| \frac{1}{\Gamma(\alpha)\cos(\alpha\pi/2)}
		\right|^{1/(\alpha-1)} \Gamma\left( \frac{1}{2}+\frac{1}{\alpha-1}\right).
	\end{equation} 
\end{corollary}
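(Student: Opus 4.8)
\emph{Proof proposal.} The engine is Theorem~\ref{th:Differentiate}: at the dyadic scale $\varepsilon=2^{-n}$ the increment $u_t((j+1)2^{-n})-u_t(j2^{-n})$ is, to leading order, $\mathfrak{A}_\alpha\sigma(u_t(j2^{-n}))$ times the corresponding increment of an fBm $F$ of Hurst index $H=\tfrac12(\alpha-1)$. Since $2/(\alpha-1)=1/H$, the left-hand sum is a weighted $1/H$-power variation, and the natural candidate for the constant is $\mathfrak{B}_\alpha=\mathfrak{A}_\alpha^{1/H}\,\E(|\mathcal{N}|^{1/H})$ with $\mathcal{N}$ standard Gaussian. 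The first thing I would do is verify this algebraically: from $\E(|\mathcal{N}|^{1/H})=\pi^{-1/2}2^{1/(2H)}\Gamma(\tfrac12+\tfrac1{2H})$ and $\mathfrak{A}_\alpha^{1/H}=\{2\Gamma(\alpha)|\cos(\alpha\pi/2)|\}^{-1/(2H)}$, the powers of $2$ cancel and one recovers exactly \eqref{B(alpha)} (using $1/(2H)=1/(\alpha-1)$ and $\Gamma(\alpha)>0$). This identifies the target and reduces the corollary to a weighted power-variation law of large numbers for $x\mapsto u_t(x)$; here the bracket $[\sigma(u_t(x))]^{2/(\alpha-1)}$ on the right is to be read as $|\sigma(u_t(x))|^{1/H}$, reflecting the absolute value in the increments.

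The analytic core is a \emph{quantitative} one-scale estimate, which is where I expect the real work to sit. Writing $D_j:=u_t((j+1)2^{-n})-u_t(j2^{-n})$ and letting $\Delta_jF$ be the matching fBm increment, the plan is to show that conditionally on $u_t(j2^{-n})$ (and on the noise away from a shrinking space--time neighbourhood of $(t,j2^{-n})$) the quantity $D_j$ is approximately a centred Gaussian of standard deviation $\mathfrak{A}_\alpha|\sigma(u_t(j2^{-n}))|\,2^{-nH}$, so that
\begin{equation*}
	\E\big(|D_j|^{1/H}\mid\cdot\big)=\mathfrak{B}_\alpha\,|\sigma(u_t(j2^{-n}))|^{1/H}\,2^{-n}\,(1+o(1)),
\end{equation*}
with error $o(2^{-n})$ uniformly in $j$. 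Concretely I would prove a rate bound of the form
\begin{equation*}
	\E\Big(\big|\,|D_j|^{1/H}-\mathfrak{A}_\alpha^{1/H}|\sigma(u_t(j2^{-n}))|^{1/H}|\Delta_jF|^{1/H}\big|\Big)\le C\,2^{-n(1+\kappa)}
\end{equation*}
for some $\kappa>0$, by showing that the remainder $D_j-\mathfrak{A}_\alpha\sigma(u_t(j2^{-n}))\Delta_jF$ has $L^q$-norm of strictly better order than $2^{-nH}$ (a gain of a Hölder power) and then invoking the elementary inequality $\big||a|^p-|b|^p\big|\le p\,(|a|\vee|b|)^{p-1}|a-b|$. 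Freezing the Lipschitz weights $\phi(u_t(\cdot))$ and $\sigma(u_t(\cdot))$ across each cell adds only an $O(2^{-nH})$ error by spatial Hölder continuity of $u_t$, which is summable after multiplication by the $\sim(b-a)2^n$ cells.

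Granting the one-scale estimate, the two remaining steps are standard. Summing the conditional expectations turns $\sum_j\phi(u_t(j2^{-n}))\,\mathfrak{B}_\alpha|\sigma(u_t(j2^{-n}))|^{1/H}2^{-n}$ into a Riemann sum converging to $\mathfrak{B}_\alpha\int_a^b\phi(u_t(x))|\sigma(u_t(x))|^{1/H}\,\d x$. For the fluctuation I would bound the variance of $S_n:=\sum_j\phi(u_t(j2^{-n}))|D_j|^{1/H}$ about this conditional mean; the crucial input is decorrelation of well-separated increments. At the fBm level this is the summability of the fractional-Gaussian-noise covariance $\rho(k)\asymp H(2H-1)|k|^{2H-2}$, which holds precisely because $H\le\tfrac12$ (with $H=\tfrac12$ giving independence), and the same decay must be transported to the increments of $u_t$. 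This yields $\mathrm{Var}(S_n)=O(2^{-n})$, hence $L^2(\P)$ convergence; and because the partitions form the geometric sequence $\{2^{-n}\}$ with summable $L^2$ error, the Borel--Cantelli lemma upgrades this at once to almost-sure convergence, giving both assertions.

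The hard part is the quantitative one-scale estimate with a rate, uniform in $j$. Theorem~\ref{th:Differentiate} supplies only convergence in probability at a single scale, which is far too weak once $\sim 2^n$ cells are summed; closing the gap requires a local-limit analysis of the stochastic-integral representation of $D_j$ (splitting the time integral into a near-diagonal part that produces the Gaussian/fGn behaviour of order $2^{-nH}$ and a lower-order remainder), together with a quantitative decoupling of the random weight $\sigma(u_t(j2^{-n}))$ from the near-diagonal fluctuation. Making this decoupling uniform in $j$ and strong enough in $L^q$ to survive summation is where the bulk of the effort lies.
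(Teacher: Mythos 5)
Your proposal follows the same skeleton as the paper's proof: identify the constant as $\mathfrak{B}_\alpha=\mathfrak{A}_\alpha^{2/(\alpha-1)}\,\E(|\mathcal{N}|^{2/(\alpha-1)})$ (your algebra is correct, and this is exactly what the paper's Corollary \ref{cor:Mean:Linear} encodes, since $\E(|\nabla_\varepsilon Z_t|^2)=\mathfrak{A}_\alpha^2\varepsilon^{\alpha-1}+O(\varepsilon^2)$); replace each increment by $\sigma(\text{weight})$ times a Gaussian quantity with a quantitative $L^k$ rate; pass to conditional means via a variance bound; and finish with Chebyshev, Borel--Cantelli along the dyadic sequence, and a Riemann sum. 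Your concern that Theorem \ref{th:Differentiate} only supplies convergence in probability is resolved by the paper's own machinery: the proof of that theorem establishes the uniform moment bound $\|(\nabla_\varepsilon u_t)(x)-\mathfrak{A}_\alpha\sigma(u_t(\tilde x))(\nabla_\varepsilon F)(x)\|_k\le A'\varepsilon^{(\alpha-1+b)/2}$ with $b=(\alpha-1)/(3\alpha-2)>0$, which is precisely the ``gain of a H\"older power'' your one-scale estimate asks for.

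The genuine divergence is in the decoupling/decorrelation step, which you correctly flag as the hard part but leave open. The paper does not transport fGn covariance decay to the increments of $u_t$; that route would still have to face the fact that the weights $\phi(u_t(j2^{-n}))$ and $\sigma(u_t(j2^{-n}))$ are correlated with the very increments being summed, so the variance of $S_n$ does not factor. Instead the paper makes the independence exact: (i) all weights are moved to the earlier time $t-\beta\varepsilon^\alpha$, with $\beta=\beta(n)=n^{16/(\alpha-1)}$ growing polynomially, the replacement cost being summable by Proposition \ref{pro:Modulus:t} and Corollary \ref{cor:Modulus:x}; (ii) each increment is replaced, via Corollary \ref{cor:localize:u} and Lemma \ref{lem:localize:u:1} (combined in \eqref{eq:localize:u:2}), by $\sigma(u_{t-\beta\varepsilon^\alpha}(j\varepsilon))\,\mathcal{I}_\varepsilon(j\varepsilon\,;t)$, where $\mathcal{I}_\varepsilon$ is the stochastic integral over the box $\mathbf{B}_\beta(j\varepsilon,t;\varepsilon)$, hence a functional of the noise \emph{after} time $t-\beta\varepsilon^\alpha$ only. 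Consequently the weights are exactly independent of all the $\mathcal{I}$'s, and $\mathcal{I}_i,\mathcal{I}_j$ are exactly independent whenever the boxes are disjoint (all but polynomially-in-$n$ many pairs), so the variance bound is immediate with no fGn covariance input; then $\E(|\mathcal{I}_\varepsilon(j\varepsilon\,;t)|^{2/(\alpha-1)})=\mathfrak{B}_\alpha 2^{-n}(1+o(1))$ by Proposition \ref{pr:localize:Z} together with Corollary \ref{cor:Mean:Linear}, and the Riemann sum closes the argument as in \eqref{H:1}. In short, your plan is sound and essentially parallel to the paper's, but its one open step --- conditioning on ``the noise away from a shrinking neighbourhood'' uniformly in $j$ --- is exactly what the paper's localization framework plus the time-shift of the weights was built to deliver, and that device is what your sketch is missing.
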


In Section \ref{sec:VS} we describe further consequences of these
Corollaries to the analysis of the KPZ equation of statistical mechanics. See,
in particular, Corollary \ref{cor:KPZ}. The latter corollary is deeply connected to,
and complements,
the recent works \cite{Hairer,QuastelRemenik} on the quadratic variation, in 
the space variable, of the Hopf--Cole solution to the KPZ equation.

Throughout, we write
\begin{equation}\label{eq:nabla}
	(\nabla_\varepsilon f)(x) := f(x)-f(x-\varepsilon),
\end{equation}
as substitute for the approximate spatial gradient of any real
function $f$. We also adopt the following notation 
\begin{equation}
	\|X\|_k:=\left\{ \E(|X|^k) \right\}^{1/k},
\end{equation}
 for every $k\in [1,\,\infty)$ and $X\in L^k(\Omega)$.
\section{Preliminaries}
Throughout this paper, we make use of the Walsh 
theory of SPDEs \cite{Walsh} to interpret  \eqref{SHE} as the following integral equation, 
\begin{equation}\label{mild}
	u_t(x) = (p_t*u_0)(x) + \int_{(0,t)\times\R} p_{t-s}(y-x)\sigma(u_s(y))\, \xi(\d s\,\d y),
\end{equation}
where $\{p_t(x)\}_{t>0,x\in\R}$ denotes the heat kernel of the fractional Laplacian
and the stochastic integral is a Walsh--It\^o integral. 
The function $(t\,,x)\mapsto p_t(x)$ 
also describes the transition density functions 
of an isotropic $\alpha$-stable L\'evy
process $\{X_t\}_{t\ge 0}$, 
and are determined via their Fourier transforms that are normalized as follows:
\begin{equation}\label{pihat}
	\hat p_t(\chi)=\e^{-t|\chi|^\alpha}\qquad(t> 0,\, \chi\in\R).
\end{equation} 

The theory of Dalang \cite{Dalang} ensures the existence of a 
solution $u$ to the SPDE (SHE), which is unique among all
solutions that satisfy  \eqref{eq:moments} 
for all $T>0$ and $k\in[2\,,\infty)$.

The proof of the main result of this paper will rely on some perturbation
arguments which will require the study of the following 
linear stochastic heat equation.  The main results of this section 
will be devoted to the study of this linear equation.  First, let us 
fix some notation. 

Let us consider the following linearization of (SHE):
\begin{align}\label{SHE-Linear}
	\frac{\partial}{\partial t} Z_t(x) =-(-\Delta)^{\alpha/2}
	Z_t(x) + \xi,
	\tag{L-SHE}
\end{align}
subject to $Z_0(x):=0$ for all $x\in\R$.
In keeping with \eqref{mild},
the solution to (L-SHE) can be written as the Wiener-integral process,
\begin{equation}\label{Z}
	Z_t(x) := \int_{(0,t)\times\R} p_{t-s}(y-x)\, \xi(\d s\,\d y)
	\qquad(t>0,\, x\in\R).
\end{equation}

It is very well known that $x\mapsto Z_t(x)$ has a version that is
H\"older continuous of any index $<(\alpha-1)/2$. This
fact relies on another well-known bound of the form
$\E(|Z_t(x)-Z_t(x-\varepsilon)|^2)=O(\varepsilon^{\alpha-1})$,
as $\varepsilon\downarrow 0$.
Our first result is basically an improvement of such an estimate. 
It will be of central importance to our later needs.

\begin{lemma}\label{(0,t)timesR}
	Choose and fix a $T>0$. Then, 
	\begin{equation}
		\E\left( \left| Z_t(x) - Z_t(x-\varepsilon)\right|^2\right)
		=\mathfrak{A}_\alpha^2\varepsilon^{\alpha-1}
		+ O(\varepsilon^2)
		\qquad(\varepsilon\downarrow 0),
	\end{equation}
	uniformly for all $t\in[0\,,T]$ and $x\in\R$, where $\mathfrak{A}_\alpha$ is defined in \eqref{A(alpha)}.
\end{lemma}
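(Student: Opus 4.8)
The plan is to exploit linearity. Since $Z$ solves (L-SHE), the increment $\nabla_\varepsilon Z_t(x)$ is a centered Gaussian Wiener integral, so its second moment is a purely deterministic quantity that can be computed exactly. First I would apply the Wiener isometry to the representation \eqref{Z}, obtaining
\[
\E\!\left(|\nabla_\varepsilon Z_t(x)|^2\right)=\int_0^t\!\!\int_\R\left[p_{t-s}(y-x)-p_{t-s}(y-x+\varepsilon)\right]^2\d y\,\d s.
\]
The translation $z=y-x$ followed by the time reversal $r=t-s$ eliminates $x$ entirely, reducing the right-hand side to $\int_0^t\!\int_\R[p_r(z)-p_r(z+\varepsilon)]^2\,\d z\,\d r$; this already delivers the uniformity in $x\in\R$ for free.

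Next I would pass to the Fourier side. Using \eqref{pihat}, the spatial Fourier transform of $z\mapsto p_r(z)-p_r(z+\varepsilon)$ is $(1-\e^{i\chi\varepsilon})\e^{-r|\chi|^\alpha}$, and since $|1-\e^{i\chi\varepsilon}|^2=2(1-\cos(\chi\varepsilon))$, Plancherel's theorem turns the inner $z$-integral into $\pi^{-1}\int_\R(1-\cos(\chi\varepsilon))\e^{-2r|\chi|^\alpha}\,\d\chi$. Carrying out the elementary $r$-integration and invoking Tonelli then gives the closed form
\[
\E\!\left(|\nabla_\varepsilon Z_t(x)|^2\right)=\frac{1}{2\pi}\int_\R\frac{(1-\cos(\chi\varepsilon))\bigl(1-\e^{-2t|\chi|^\alpha}\bigr)}{|\chi|^\alpha}\,\d\chi.
\]

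The leading order comes from replacing the factor $1-\e^{-2t|\chi|^\alpha}$ by $1$. The resulting integral scales exactly: the substitution $u=\chi\varepsilon$ extracts a factor $\varepsilon^{\alpha-1}$ and leaves the pure constant $\frac{1}{2\pi}\int_\R(1-\cos u)|u|^{-\alpha}\,\d u$. The one genuine computation here is the classical evaluation $\int_0^\infty(1-\cos u)u^{-\alpha}\,\d u=\pi/\{2\Gamma(\alpha)|\cos(\alpha\pi/2)|\}$, valid on $1<\alpha\le2$ (the absolute value accounting for $\cos(\alpha\pi/2)<0$ there), which I would obtain from the Mellin transform of $1-\cos$ together with the reflection formula. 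This identifies the constant as exactly $\mathfrak{A}_\alpha^2$ of \eqref{A(alpha)}, producing the main term $\mathfrak{A}_\alpha^2\varepsilon^{\alpha-1}$.

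It then remains to bound the correction $R(\varepsilon,t):=\frac{1}{2\pi}\int_\R(1-\cos(\chi\varepsilon))\e^{-2t|\chi|^\alpha}|\chi|^{-\alpha}\,\d\chi$, which is where I expect the real work to lie. The elementary bound $1-\cos(\chi\varepsilon)\le\tfrac12\chi^2\varepsilon^2$ gives $R(\varepsilon,t)\le\frac{\varepsilon^2}{4\pi}\int_\R|\chi|^{2-\alpha}\e^{-2t|\chi|^\alpha}\,\d\chi$, and the remaining integral converges since $2-\alpha\ge0$ tames the origin while the stable exponential tames infinity; hence $R(\varepsilon,t)=O(\varepsilon^2)$, and combining it with the main term yields the expansion. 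The main obstacle is the uniformity in $t$: a scaling shows that the last $\chi$-integral behaves like $t^{-(3-\alpha)/\alpha}$, which blows up as $t\downarrow0$, so the $O(\varepsilon^2)$ constant is uniform only for $t$ bounded away from $0$. Near $t=0$ one must argue separately, for instance through the crude monotone bound $R(\varepsilon,t)\le R(\varepsilon,0)=\mathfrak{A}_\alpha^2\varepsilon^{\alpha-1}$, and reconcile this with the stated uniform expansion on $[0\,,T]$.
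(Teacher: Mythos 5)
Your computation is essentially identical to the paper's proof: Wiener isometry plus Plancherel, integration of the time variable via Fubini, splitting off the exactly-scaling term $\pi^{-1}\int_0^\infty(1-\cos(\chi\varepsilon))\chi^{-\alpha}\,\d\chi=\varepsilon^{\alpha-1}\pi^{-1}\int_0^\infty(1-\cos z)z^{-\alpha}\,\d z=\mathfrak{A}_\alpha^2\varepsilon^{\alpha-1}$, and bounding the leftover exponential term via $1-\cos\theta\le\theta^2$; the only cosmetic difference is that the paper cites \cite{K-SN} for the cosine integral where you propose the Mellin/reflection evaluation, with the same constant.

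The substantive difference is your final paragraph, and your suspicion there is correct --- in fact you have caught a genuine slip in the paper's own proof. Display \eqref{eq:e(1-cos)} asserts that $\varepsilon^2\int_0^\infty\e^{-2t\chi^\alpha}\chi^{2-\alpha}\,\d\chi=O(\varepsilon^2)$ \emph{uniformly for $t\in[0,T]$}, but, exactly as you computed, that integral scales like $t^{-(3-\alpha)/\alpha}$ and diverges as $t\downarrow0$. Moreover the uniform two-sided expansion cannot be rescued by any separate argument near $t=0$: since $Z_0\equiv0$, the left-hand side of the lemma vanishes at $t=0$, while the main term $\mathfrak{A}_\alpha^2\varepsilon^{\alpha-1}$ is not $O(\varepsilon^2)$ because $\alpha-1<2$; so the statement, read literally with uniformity down to $t=0$, is false. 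The correct conclusion of your (and the paper's) argument is uniformity over $t\in[t_0,T]$ and $x\in\R$ for each fixed $t_0>0$, or equivalently a remainder of size $O\bigl(\varepsilon^2t^{-(3-\alpha)/\alpha}\bigr)$. This costs nothing downstream: your remainder $R(\varepsilon,t)$ is nonnegative and enters with a minus sign, so the one-sided inequality $\E\left(|Z_t(x)-Z_t(x-\varepsilon)|^2\right)\le\mathfrak{A}_\alpha^2\varepsilon^{\alpha-1}$ \emph{does} hold uniformly on all of $[0,T]\times\R$, and that is the only form of the lemma the paper uses uniformly in $t$ (e.g.\ in Corollary \ref{cor:Modulus:x} and in \eqref{holder}); the two-sided asymptotics are invoked only at fixed $t>0$ or at times bounded away from zero. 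So rather than trying to ``reconcile'' the expansion on $[0,T]$, the honest fix is to state the expansion for $t$ bounded away from $0$ together with the uniform upper bound.
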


\begin{proof}
	To aid the presentation of the proof, we define
	\begin{equation}
		Q := \E\left(\left| Z_t(x) - Z_t(x-\varepsilon) \right|^2\right)
		\qquad(t,\varepsilon>0,\,x\in\R).
	\end{equation}
	The mild formulation of the solution together with Wiener's isometry and Plancherel's formula yield
	\begin{equation}\label{D2}\begin{split}
		Q &=\int_0^t\d s\int_{-\infty}^\infty\d y\
			\left[ p_s(y)-p_s(y-\varepsilon)\right]^2\\
		&= \frac{1}{2\pi}\int_0^t\d s\int_{-\infty}^\infty\d\chi\
			\e^{-2s|\chi|^\alpha}\left| 1- \e^{-i\chi\varepsilon}\right|^2\\
		&=\frac{1}{\pi}\int_0^t\d s\int_{-\infty}^\infty\d\chi\
			\e^{-2s|\chi|^\alpha}[1-\cos(\chi\varepsilon)],
	\end{split}\end{equation}
	where we have also used the simple fact that $|1-\exp(i\theta)|^2=2(1-\cos\theta)$
	for every $\theta\in\R$. We now use Fubini's theorem in order
	to interchange the order of integration, and then use symmetry to deduce
	the following:
	\begin{equation}\label{D3}\begin{split}
		Q & = \frac1\pi
			\int_0^\infty\frac{1-\e^{-2t|\chi|^\alpha}}{\chi^\alpha}
			[1-\cos(\chi\varepsilon)]\,\d\chi\\
		&=\frac1\pi\int_0^\infty\frac{1-\cos(\chi\varepsilon)}{\chi^\alpha}\,\d\chi
			-\frac1\pi\int_0^\infty\frac{\e^{-2t|\chi|^\alpha}}{\chi^\alpha}
			[1-\cos(\chi\varepsilon)]\,\d\chi.
	\end{split}\end{equation}
	Next, we separately compute the preceding integrals.  
	In order to calculate the first, we use scaling and then 
	evaluate the resulting integral using \cite[\S4, p.\ 13]{K-SN}:
	\begin{equation}\label{eq:1-cos}
		\int_0^\infty\frac{1-\cos(\chi\varepsilon)}{\chi^\alpha}\,\d\chi
		=\varepsilon^{\alpha-1} \cdot\int_0^\infty\frac{1-\cos z}{z^\alpha}\,\d z
		=\pi \mathfrak{A}_\alpha^2\varepsilon^{\alpha-1}.	
	\end{equation}
	Since $1-\cos \theta \le \theta^2$ for all $\theta\in\R$,
	the second integral on the last line of \eqref{D3} is bounded from above by
	\begin{equation}\label{eq:e(1-cos)}
		\int_0^\infty\frac{\e^{-2t|\chi|^\alpha}}{\chi^\alpha}
		[1-\cos(\chi\varepsilon)]\,\d\chi \le \varepsilon^2\cdot
		\int_0^\infty \e^{-2t\chi^\alpha}\chi^{2-\alpha}\,\d\chi=O(\varepsilon^2),
	\end{equation}
	uniformly for all $t\in[0\,,T]$. The lemma follows from combining the
	preceding two displays with \eqref{D3}.
\end{proof}

Lemma \ref{(0,t)timesR} has a number of immediate, though still useful,
consequences. We list two of those that we shall need. The
first computes the correct power that ``essentially linearizes''
the spatial increments of the process $Z$.

\begin{corollary}\label{cor:Mean:Linear}
	Choose and fix a $T>0$. Then, 
	\begin{equation}
		\E\left( \left| Z_t(x) - Z_t(x-\varepsilon)\right|^{2/(\alpha-1)}\right)
		=\mathfrak{B}_\alpha\varepsilon + O\left(
		\varepsilon^{2/(\alpha-1)} \right) \qquad(\varepsilon\downarrow 0),
	\end{equation}
	uniformly for all $t\in[0\,,T]$ and $x\in\R$.
\end{corollary}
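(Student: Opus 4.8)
The plan is to reduce everything to the variance via Gaussianity. For each fixed $t>0$ and $x\in\R$ the increment $Z_t(x)-Z_t(x-\varepsilon)$ is the Wiener integral of the deterministic kernel $p_{t-s}(y-x)-p_{t-s}(y-x+\varepsilon)$ against space--time white noise, hence a centered Gaussian random variable whose variance is exactly the quantity $Q$ computed in Lemma \ref{(0,t)timesR}. For a centered Gaussian $G$ with variance $\Sigma^2$ and any $p\ge 1$ one has the exact identity $\E(|G|^p)=C_p\,(\Sigma^2)^{p/2}$, where $C_p:=\E(|\mathcal N|^p)=2^{p/2}\Gamma((p+1)/2)/\sqrt\pi$ and $\mathcal N$ is standard normal. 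Taking $p:=2/(\alpha-1)$, so that $p/2=1/(\alpha-1)$, collapses the corollary to inserting the variance from Lemma \ref{(0,t)timesR} and raising it to the power $1/(\alpha-1)$.

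By Lemma \ref{(0,t)timesR} we may write $\Sigma^2=\mathfrak A_\alpha^2\varepsilon^{\alpha-1}+O(\varepsilon^2)$ (uniformly in $x$ and in $t$, as in that lemma), so the leading contribution is $C_p(\mathfrak A_\alpha^2\varepsilon^{\alpha-1})^{1/(\alpha-1)}=C_p\,\mathfrak A_\alpha^{2/(\alpha-1)}\,\varepsilon$. The one genuinely computational step is to check that this coincides with $\mathfrak B_\alpha\varepsilon$. For that I would substitute $C_p=2^{1/(\alpha-1)}\Gamma(\tfrac12+\tfrac1{\alpha-1})/\sqrt\pi$ together with $\mathfrak A_\alpha^{2/(\alpha-1)}=2^{-1/(\alpha-1)}\{\Gamma(\alpha)|\cos(\alpha\pi/2)|\}^{-1/(\alpha-1)}$ read off from \eqref{A(alpha)}, note that the two factors $2^{\pm 1/(\alpha-1)}$ cancel, and recognize that the surviving product is precisely $\mathfrak B_\alpha$ as defined in \eqref{B(alpha)}. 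This constant bookkeeping is the step where I would be most careful.

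The remainder is the main subtlety, because $1/(\alpha-1)$ is not an integer when $\alpha<2$, so one cannot exponentiate the two summands of $\Sigma^2$ separately. Instead I would factor out the leading power and write $(\Sigma^2)^{1/(\alpha-1)}=\mathfrak A_\alpha^{2/(\alpha-1)}\,\varepsilon\,(1+\delta)^{1/(\alpha-1)}$ with $\delta:=O(\varepsilon^2)/(\mathfrak A_\alpha^2\varepsilon^{\alpha-1})=O(\varepsilon^{3-\alpha})\to 0$; a binomial expansion then gives $(1+\delta)^{1/(\alpha-1)}=1+O(\delta)$, so the correction to $\mathfrak B_\alpha\varepsilon$ is controlled by the cross term and is of order $\varepsilon^{4-\alpha}$. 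Since $\alpha\in(1,2]$ this is a strictly higher power of $\varepsilon$ than the leading term, hence $o(\varepsilon)$ (and it coincides with the advertised $O(\varepsilon^{2/(\alpha-1)})$ at the Brownian endpoint $\alpha=2$); this negligibility is exactly the content that the leading-order statement, and its later applications such as Corollary \ref{cor:VAR}, require. The uniformity in $t$ and $x$ is inherited directly from Lemma \ref{(0,t)timesR}, since $\mathfrak A_\alpha$ and $C_p$ are constants and the implied constant bounding $\delta$ is the one already furnished by that lemma.
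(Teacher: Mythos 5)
Your proposal is correct and takes essentially the same route as the paper: reduce the moment to the variance via the exact Gaussian scaling identity $\E\left(|X|^{2/(\alpha-1)}\right)=c\,\left[\E\left(X^2\right)\right]^{1/(\alpha-1)}$ with $c=2^{1/(\alpha-1)}\Gamma\left(\tfrac12+\tfrac{1}{\alpha-1}\right)/\sqrt{\pi}$, insert the variance expansion of Lemma \ref{(0,t)timesR}, and match constants against \eqref{B(alpha)}; your bookkeeping of the cancelling factors $2^{\pm 1/(\alpha-1)}$ is exactly the computation that makes $c\,\mathfrak{A}_\alpha^{2/(\alpha-1)}=\mathfrak{B}_\alpha$.

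The only divergence is in the remainder term, and there your treatment is the more careful one. The paper's proof stops at ``apply \eqref{theabove} together with Lemma \ref{(0,t)timesR} and \eqref{B(alpha)}'' and never expands $\left[\mathfrak{A}_\alpha^2\varepsilon^{\alpha-1}+O(\varepsilon^2)\right]^{1/(\alpha-1)}$; the advertised remainder $O\left(\varepsilon^{2/(\alpha-1)}\right)$ is what one would get by raising the two summands to the power $1/(\alpha-1)$ separately, which---as you point out---is not legitimate for non-integer exponents. Your factor-and-expand argument gives $O\left(\varepsilon^{4-\alpha}\right)$, and that exponent is in fact the true one: for fixed $t>0$ the subtracted integral in \eqref{D3} is genuinely of order $\varepsilon^2$, with asymptotic coefficient $\tfrac{1}{2\pi}\int_0^\infty\e^{-2t\chi^\alpha}\chi^{2-\alpha}\,\d\chi>0$, so the correction to $\mathfrak{B}_\alpha\varepsilon$ is of exact order $\varepsilon^{4-\alpha}$, which for $\alpha\in(1\,,2)$ is strictly \emph{larger} than $\varepsilon^{2/(\alpha-1)}$. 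So, strictly speaking, neither your argument nor the paper's establishes the displayed remainder when $\alpha<2$; the statement's error term should read $O\left(\varepsilon^{4-\alpha}\right)$. As you note, this weaker remainder is still $o(\varepsilon)$, which is all that the later uses of this corollary (e.g.\ the Riemann-sum step in the proof of Corollary \ref{cor:VAR}, which only needs $\mathfrak{B}_\alpha\varepsilon(1+o(1))$) actually require.
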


\begin{proof}
	We appeal to the fact that every positive moment of a 
	centered Gaussian random variable $X$ is determined by the variance
	of $X$.  In particular,
		\begin{equation}\label{theabove}
			\E\left(|X|^{2/(\alpha-1)}\right) =
			c \left[\E\left(X^2\right)\right]^{1/(\alpha-1)},
		\end{equation}
		where
		\begin{equation}
			c  := \frac{1}{\sqrt{2\pi}}\int_{-\infty}^\infty|z|^{2/(\alpha-1)}
			\e^{-z^2/2}\,\d z = \frac{2^{1/(\alpha-1)}}{\sqrt\pi}\Gamma\left(
			\frac12+\frac{1}{\alpha-1}\right).
		\end{equation}
	For every fixed $t\in [0\,, T]$ and $x\in \R$,  
	the increment $Z_t(x) - Z_t(x-\varepsilon)$ is a Gaussian random variable with
	mean zero.  
	Therefore we may apply \eqref{theabove} 
	[with $X:=Z_t(x) - Z_t(x-\varepsilon)$],
	together with Lemma \ref{(0,t)timesR} and \eqref{B(alpha)},
	in order to conclude the proof of the corollary.
\end{proof}

Next we mention the following consequence of Lemma 
\ref{(0,t)timesR}; it states the increments of the solution $Z$ behaves much like those of the solution $u$ to the non-linear (SHE).

\begin{corollary}\label{cor:Modulus:x}
	Let $u_t(x)$ denote the solution to \eqref{SHE}. Then, for all real numbers
	$T>0$ and $k\in[2\,,\infty)$
	there exists a finite constant $C:=C_{k,T}$, such that 
	\begin{equation}
		\E\left( \left| u_t(x)-u_t(y)
		\right|^k\right)\le C|x-y|^{(\alpha-1)k/2},
	\end{equation}
	simultaneously for every $x,y\in\R$ and $t\in[0,T]$.
\end{corollary}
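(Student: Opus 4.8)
The plan is to work from the mild formulation \eqref{mild} and split the spatial increment $u_t(x)-u_t(y)$ into a deterministic part coming from the initial data and a stochastic-integral part, estimating each separately in $\|\cdot\|_k$ and then using $\E(|u_t(x)-u_t(y)|^k)=\|u_t(x)-u_t(y)\|_k^k$. Writing $h:=x-y$, the deterministic part is $(p_t*u_0)(x)-(p_t*u_0)(y)$, which is bounded in absolute value by $\|u_0\|_\infty\,\|p_t-p_t(\cdot-h)\|_{L^1(\R)}$. By the scaling $p_t(w)=t^{-1/\alpha}p_1(t^{-1/\alpha}w)$ and the smoothness and integrability of $p_1$ together with its derivative, the $L^1$-modulus of continuity of the kernel is $O(\min(1\,,t^{-1/\alpha}|h|))$; this is the smoother (essentially Lipschitz for $t$ bounded below) and easier contribution, and it is dominated by $|h|^{(\alpha-1)/2}$ since $(\alpha-1)/2<1$.

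The heart of the matter is the stochastic part,
$$ N:=\int_{(0,t)\times\R}\left[p_{t-s}(z-x)-p_{t-s}(z-y)\right]\sigma(u_s(z))\,\xi(\d s\,\d z), $$
and here the idea is to reduce the $L^k$ estimate directly to the $L^2$ computation already carried out in Lemma \ref{(0,t)timesR}. I would apply the Burkholder--Davis--Gundy inequality for Walsh--It\^o integrals together with Minkowski's integral inequality (valid since $k/2\ge1$) to obtain, for a constant $C_k$ depending only on $k$,
$$ \|N\|_k^2\le C_k\int_0^t\d s\int_{-\infty}^\infty\d z\ \left[p_{t-s}(z-x)-p_{t-s}(z-y)\right]^2\,\|\sigma(u_s(z))\|_k^2 . $$
Because $\sigma$ is Lipschitz, $\|\sigma(u_s(z))\|_k\le|\sigma(0)|+\lip_\sigma\|u_s(z)\|_k$, and the latter is bounded uniformly in $(s\,,z)$ by the moment condition \eqref{eq:moments}. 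Pulling this uniform bound out of the integral and reindexing $s\mapsto t-s$ leaves exactly the kernel integral $\int_0^t\d s\int[p_s(w)-p_s(w-h)]^2\,\d w$, which is precisely the quantity $Q$ evaluated in Lemma \ref{(0,t)timesR}. That lemma gives $Q=\mathfrak{A}_\alpha^2|h|^{\alpha-1}+O(h^2)$ uniformly in $t\in[0\,,T]$, whence $\|N\|_k\le C|h|^{(\alpha-1)/2}$ for $|h|\le1$; for $|h|>1$ the claimed bound is immediate from the uniform moment bound on $u$ itself.

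Combining the two parts and raising to the $k$-th power yields the corollary. I expect the only genuinely delicate point to be the uniformity in $t\in[0\,,T]$: the stochastic part is uniform for free because $Q$ is, but the deterministic part's modulus degenerates as $t\downarrow0$ unless one also controls the regularity of the bounded initial datum $u_0$ itself, so some care is needed near $t=0$. The remaining steps---the BDG/Minkowski chain and the invocation of \eqref{eq:moments}---are routine once the reduction to Lemma \ref{(0,t)timesR} is spotted, and indeed it is this reduction that makes the corollary an essentially immediate consequence of the lemma.
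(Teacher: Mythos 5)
Your proposal is correct and takes essentially the same route as the paper: a Burkholder--Davis--Gundy bound combined with the uniform moment bound \eqref{eq:moments} on $\|\sigma(u_s(z))\|_k$ reduces the stochastic term to exactly the Gaussian quantity computed in Lemma \ref{(0,t)timesR}, with the case $|x-y|>1$ handled directly by \eqref{eq:moments}. The one delicate point you flag---uniformity of the initial-data contribution as $t\downarrow0$---is also present (and treated no more carefully) in the paper's own proof, which simply absorbs that contribution into a $|x-y|^2$ term after noting that $p_t*u_0$ is smooth for each fixed $t>0$.
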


\begin{proof}
	Thanks to \eqref{eq:moments}, it suffices to consider the case that
	$|x-y|\le 1$, which we do from here on.
	
	Since $u_0$ is bounded, $(p_t*u_0)(x)$ is smooth for every fixed $t>0$. 
	Indeed, \eqref{pihat} ensures that
	$\hat p_t(\chi)$---whence also $p_t$---are in the Schwartz space
	of rapidly decreasing test functions.  Because $u_0$ is a tempered distribution
	a standard fact  \cite[Theorem 3.13]{S-W} implies that
	$(p_t*u_0)(x)$ is smooth. 
	An application of
	a sharp form of the Burkholder--Davis--Gundy inequality---as in \cite{FK}---%
	ensures that, uniformly for all
	$x,y\in\R$ and $t\in[0\,,T]$,
	\begin{align}
		&\|u_t(x)-u_t(y)\|^2_k\\\notag
		&\le c_2\left[|x-y|^2
			+\int_0^t\d s\int_{-\infty}^\infty\d z\
			|p_{t-s}(z-x)-p_{t-s}(z-y)|^2\|\sigma(u_s(z))\|_k^2\right],
	\end{align}
	where $c_2$ is positive and finite, and depends only on $k$ and $T$.
	[See \cite{FK} for the details of this sort
	of argument.]
	
	Because $\sigma$ has at-most-linear growth, \eqref{eq:moments}
	ensures that $\|\sigma(u_s(y))\|_k^2$ is bounded uniformly in
	$s\in[0\,,T]$ and $y\in\R$, and therefore we can find
	$c_3:=c_{k,T}\in(0\,,\infty)$ such that, uniformly for $x,y\in\R$ and
	$t\in[0\,,T]$,
	\begin{align}\notag
		\|u_t(x)-u_t(y)\|^2_k
			&\le c_3\left[|x-y|^2
			+\int_0^t\d r\int_{-\infty}^\infty\d z\
			|p_r(z-x)-p_r(z-y)|^2\right]\\
		&=c_3\left[|y-x|^2+ \E\left( \left| Z_t(x)-Z_t(y) \right|^2\right)\right].
	\end{align}
	Because we consider only the case that $|x-y|\le1$,
	Lemma \ref{(0,t)timesR} shows us that
	$\|u_t(x)-u_t(y)\|^2_k\le\text{const}\cdot |x-y|^{\alpha-1}$, as desired.
\end{proof}

Let us conclude this section with another continuity estimate. 
This bound follows essentially from the Appendix of \cite{FK};
see also \cite{DebbiDozzi}. Therefore, we will not describe a proof.
\begin{proposition}\label{pro:Modulus:t}
	Suppose that $u_t(x)$ be as in the previous Corollary, then for all $T>0$
	and $k\in[2\,,\infty)$, there exists a finite constant $C:=C_{k, T}$ such that 
	\begin{equation}
		\|u_s(x)-u_t(x)\|_k\leq C|t-s|^{(\alpha-1)/2\alpha},
	\end{equation}
	simultaneously for every $s, t\in [0, T]$ and $x\in \R$.
\end{proposition}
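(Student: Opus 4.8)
The plan is to mirror the proof of Corollary \ref{cor:Modulus:x}, replacing the spatial increment by a temporal one and replacing Lemma \ref{(0,t)timesR} by its temporal analogue. Assume without loss of generality that $0\le s<t\le T$, and write $\tau:=t-s$. Using the mild formulation \eqref{mild}, decompose
\[
	u_t(x)-u_s(x) = \left[(p_t*u_0)(x)-(p_s*u_0)(x)\right] + J,
\]
where $J$ is the difference of the two Walsh--It\^o integrals. Exactly as in the proof of Corollary \ref{cor:Modulus:x}, the sharp Burkholder--Davis--Gundy inequality of \cite{FK}, together with Minkowski's inequality and the uniform bound on $\|\sigma(u_{s'}(y))\|_k$ afforded by \eqref{eq:moments} and the at-most-linear growth of $\sigma$, reduces matters to
\[
	\|u_t(x)-u_s(x)\|_k^2 \le c\left[\left|(p_t*u_0)(x)-(p_s*u_0)(x)\right|^2 + \E\left(\left|Z_t(x)-Z_s(x)\right|^2\right)\right],
\]
uniformly over $x\in\R$ and $0\le s<t\le T$, where $Z$ is the linear solution \eqref{Z}. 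Thus the only genuinely new ingredient is a temporal analogue of Lemma \ref{(0,t)timesR}, namely the claim that $\E(|Z_t(x)-Z_s(x)|^2)=O(\tau^{(\alpha-1)/\alpha})$ uniformly; taking square roots then produces the asserted exponent $(\alpha-1)/2\alpha$.

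To establish this temporal second-moment bound I would proceed as in Lemma \ref{(0,t)timesR}. Splitting the noise into its contributions over $(0,s)\times\R$ and $(s,t)\times\R$ and invoking Wiener's isometry together with Plancherel's formula gives
\[
	\E\left(\left|Z_t(x)-Z_s(x)\right|^2\right) = \frac{1}{2\pi}\int_0^s\!\d s'\!\int_{-\infty}^\infty\!\d\chi\,\left(\e^{-(t-s')|\chi|^\alpha}-\e^{-(s-s')|\chi|^\alpha}\right)^2 + \frac{1}{2\pi}\int_s^t\!\d s'\!\int_{-\infty}^\infty\!\d\chi\,\e^{-2(t-s')|\chi|^\alpha},
\]
which is manifestly independent of $x$. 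After the substitution $v:=t-s'$ and the evaluation $\int_{-\infty}^\infty\e^{-2v|\chi|^\alpha}\,\d\chi=\mathrm{const}\cdot v^{-1/\alpha}$, the second term equals $\mathrm{const}\cdot\int_0^\tau v^{-1/\alpha}\,\d v=\mathrm{const}\cdot\tau^{(\alpha-1)/\alpha}$. For the first term I would factor $\e^{-(t-s')|\chi|^\alpha}-\e^{-(s-s')|\chi|^\alpha}=\e^{-(s-s')|\chi|^\alpha}(\e^{-\tau|\chi|^\alpha}-1)$, integrate the resulting $\e^{-2(s-s')|\chi|^\alpha}$ in $s'$ over $(0,s)$, and bound the residual factor $1-\e^{-2s|\chi|^\alpha}\le 1$; this leaves $\frac{1}{4\pi}\int_{-\infty}^\infty |\chi|^{-\alpha}(1-\e^{-\tau|\chi|^\alpha})^2\,\d\chi$. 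The scaling $\chi\mapsto\tau^{-1/\alpha}\chi$ then extracts the prefactor $\tau^{(\alpha-1)/\alpha}$ and leaves the dimensionless integral $\int_{-\infty}^\infty|\eta|^{-\alpha}(1-\e^{-|\eta|^\alpha})^2\,\d\eta$.

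The step I expect to require the most care is verifying that this last integral is finite, since the whole estimate rests on it. Near $\eta=0$ the integrand behaves like $|\eta|^{\alpha}$ and is harmless, while at infinity $(1-\e^{-|\eta|^\alpha})^2\to 1$, so the integrand is comparable to $|\eta|^{-\alpha}$, which is integrable precisely because $\alpha>1$; this is exactly where the standing restriction $\alpha\in(1,2]$ enters, and it is the mechanism that converts the naive variance exponent into the sharper $(\alpha-1)/\alpha$.

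Finally, the deterministic contribution is of strictly higher order in $\tau$, so it does not affect the stated H\"older exponent. Writing $(p_t*u_0)(x)-(p_s*u_0)(x)=\bigl(p_s*(p_\tau*u_0-u_0)\bigr)(x)$ and using that the isotropic $\alpha$-stable kernel has a finite first moment when $\alpha>1$, one bounds this term by $\mathrm{const}\cdot\tau^{1/\alpha}$; since $1/\alpha>(\alpha-1)/2\alpha$ for $\alpha\le 2$, its square $O(\tau^{2/\alpha})$ is dominated by the stochastic contribution $O(\tau^{(\alpha-1)/\alpha})$. This is the same smoothing mechanism that was already invoked for the spatial increment in the proof of Corollary \ref{cor:Modulus:x}, and it is the part that is treated in detail in \cite{FK,DebbiDozzi}. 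Collecting the three displays then yields the proposition.
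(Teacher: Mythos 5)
The paper never proves Proposition \ref{pro:Modulus:t}; it simply defers to the Appendix of \cite{FK} and to \cite{DebbiDozzi}. What you have written is a correct, self-contained reconstruction of that omitted argument, and it is structurally the exact temporal analogue of the paper's own proof of the spatial estimate, Corollary \ref{cor:Modulus:x}: reduce, via the sharp Burkholder--Davis--Gundy inequality of \cite{FK} and the uniform bound on $\|\sigma(u_r(y))\|_k$ coming from \eqref{eq:moments}, to the second moment of the temporal increment of the linear solution $Z$ in \eqref{Z}, and then compute that second moment by the Wiener isometry and Plancherel. Your computation of $\E(|Z_t(x)-Z_s(x)|^2)$ is sound in all its steps: the split of the noise over $(0,s)\times\R$ and $(s,t)\times\R$, the evaluation of the second piece as $\mathrm{const}\cdot\tau^{(\alpha-1)/\alpha}$ (the integral $\int_0^\tau v^{-1/\alpha}\,\d v$ converges precisely because $\alpha>1$), the bound $1-\e^{-2s|\chi|^\alpha}\le 1$ followed by the scaling $\chi\mapsto\tau^{-1/\alpha}\chi$ for the first piece, and the finiteness of $\int_{-\infty}^\infty|\eta|^{-\alpha}\left(1-\e^{-|\eta|^\alpha}\right)^2\d\eta$, where again $\alpha>1$ is what saves the tail. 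This yields the variance exponent $(\alpha-1)/\alpha$ and hence the claimed $L^k$ exponent $(\alpha-1)/(2\alpha)$.

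One caveat, which you inherit from the paper rather than introduce: your bound $\left\|p_\tau*u_0-u_0\right\|_\infty\le\mathrm{const}\cdot\tau^{1/\alpha}$ for the deterministic term is not available under the paper's standing assumption that $u_0$ is merely bounded; it requires $u_0$ to be Lipschitz (H\"older of order at least $(\alpha-1)/2$ would also suffice, giving the rate $\tau^{\gamma/\alpha}$). For bounded but discontinuous $u_0$ (say, an indicator function) the proposition as stated actually fails at $s=0$, since $(p_t*u_0)(x)-u_0(x)$ need not vanish as $t\downarrow0$. The paper makes the same silent assumption elsewhere: its proof of Corollary \ref{cor:Modulus:x} bounds the increment of $p_t*u_0$ by $\mathrm{const}\cdot|x-y|$ uniformly in $t\in[0,T]$, Corollary \ref{cor:localize:u} invokes $\left|(\nabla_\varepsilon p_t*u_0)(x)\right|\le\mathrm{const}\cdot\varepsilon$, and the KPZ application (Corollary \ref{cor:KPZ}) explicitly assumes Lipschitz initial data. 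So your proof is as correct as the statement permits; to make it airtight you should either state the regularity hypothesis on $u_0$ explicitly, or restrict to times bounded below by some $t_0>0$, where the smoothing bound $\|\partial_x(p_s*u_0)\|_\infty\le\mathrm{const}\cdot s^{-1/\alpha}\|u_0\|_\infty$ replaces the Lipschitz hypothesis at the cost of a constant depending on $t_0$.
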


\section{Fractional Brownian motion}
Recall that a random field
$\Phi:=\{\Phi(x)\}_{x\in\R}$ is called a \emph{fractional
Brownian motion with Hurst index $H\in(0\,,1)$} if $\Phi$ is a mean-zero
Gaussian process with 
\begin{equation}
	\E\left( \left| \Phi(x) - \Phi(y) \right|^2\right) 
	= |x-y|^{2H}\qquad\text{for all $x,y\in\R$}.
\end{equation}
We abbreviate this by saying that $\Phi$ is a $\textnormal{fBm}(H)$.
In particular, fBm($\nicefrac12$) is easily seen to be ordinary Brownian motion.

\begin{proposition}\label{pr:fBm}
	Fix some $t>0$. Then, the solution $Z_t$ to the linear stochastic  heat equation
	\textnormal{(L-SHE)}, at time $t$, can be decomposed as 
	\begin{equation}\label{eq:ZFS}
		Z_t(x) = \mathfrak{A}_\alpha
		F(x) + S(x)\qquad\text{for all $x\in\R$},
	\end{equation}
	where $F$ is $\textnormal{fBm}((\alpha-1)/2)$ and $S$ is a centered
	Gaussian process with $C^\infty$ sample functions.
\end{proposition}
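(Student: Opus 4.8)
The plan is to construct $F$ and $S$ explicitly from the spectral representation of $Z_t$ and to separate the ``rough'' part (which will be the fBm) from a smooth remainder. Recall from \eqref{Z} and \eqref{D2} that $Z_t(x)$ is a mean-zero Gaussian process, so the decomposition \eqref{eq:ZFS} is entirely a matter of matching covariances. The key observation is that the computation in Lemma \ref{(0,t)timesR} already isolates the two relevant pieces: the first integral on the last line of \eqref{D3}, namely $\tfrac1\pi\int_0^\infty \chi^{-\alpha}[1-\cos(\chi\varepsilon)]\,\d\chi = \mathfrak{A}_\alpha^2\varepsilon^{\alpha-1}$, is exactly $\mathfrak{A}_\alpha^2$ times the variance of the increment of an $\textnormal{fBm}((\alpha-1)/2)$, while the second integral, carrying the factor $\e^{-2t|\chi|^\alpha}$, is the smooth correction. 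So I would define $F$ to be the fBm whose stochastic spectral representation uses the kernel $\chi^{-\alpha/2}$ and define $S$ by the complementary spectral density.

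Concretely, I would write $Z_t$ as a white-noise integral over frequency space. Using \eqref{pihat} and Wiener's isometry, one has a representation of the form $Z_t(x) = \int_{-\infty}^\infty g_t(\chi)\,\e^{-i\chi x}\,W(\d\chi)$ (interpreted as a real Gaussian field via the standard symmetrization), where $|g_t(\chi)|^2$ is proportional to $\chi^{-\alpha}(1-\e^{-2t|\chi|^\alpha})$ — this is precisely the spectral density whose increment variance reproduces \eqref{D3}. I would then split $\chi^{-\alpha}(1-\e^{-2t|\chi|^\alpha}) = \chi^{-\alpha} - \chi^{-\alpha}\e^{-2t|\chi|^\alpha}$ at the level of the \emph{complex Gaussian measure}, not merely the density: that is, write $Z_t(x) = \mathfrak{A}_\alpha F(x) + S(x)$ where $F(x):=\mathfrak{A}_\alpha^{-1}\int \tilde g(\chi)\e^{-i\chi x}W(\d\chi)$ uses the singular part $|\tilde g(\chi)|^2 \propto \chi^{-\alpha}$ and $S$ uses the remaining part, so that $F$ and $S$ are built from the \emph{same} underlying noise $W$ and their sum literally reconstitutes $Z_t$. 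The harmonizable spectral representation of fBm guarantees that the first process is, up to the normalizing constant captured by $\mathfrak{A}_\alpha$, a genuine $\textnormal{fBm}((\alpha-1)/2)$, since its increment variance is $\mathfrak{A}_\alpha^2|x-y|^{\alpha-1}$ by \eqref{eq:1-cos}.

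It then remains to verify that $S$ has $C^\infty$ sample functions. For this I would show that $S$ is a centered Gaussian field whose covariance is smooth — indeed real-analytic — in $x$. The spectral density of $S$ is proportional to $\chi^{-\alpha}\e^{-2t|\chi|^\alpha}$, which for fixed $t>0$ decays faster than any polynomial as $|\chi|\to\infty$ and is integrable against $\chi^{2m}$ for every $m\ge 0$ (the factor $\e^{-2t|\chi|^\alpha}$ kills the high-frequency mass, and near $\chi=0$ the factor $1-\e^{-2t|\chi|^\alpha}\sim 2t|\chi|^\alpha$ cancels the $\chi^{-\alpha}$ singularity so that $S$'s density is in fact bounded near the origin). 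Consequently every spatial derivative $\partial_x^m S(x)$ exists as an $L^2(\P)$-limit and is again a stationary Gaussian field with finite variance; a Kolmogorov continuity argument applied to each derivative, together with the standard fact that a Gaussian field all of whose weak derivatives are continuous has a $C^\infty$ modification, yields the claim.

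The main obstacle I anticipate is the bookkeeping at the origin: one must confirm that peeling off the pure-power density $\chi^{-\alpha}$ to form $F$ leaves $S$ with a density that is genuinely nonsingular at $\chi=0$, rather than merely less singular. This is where the cancellation $1-\e^{-2t|\chi|^\alpha}=O(|\chi|^\alpha)$ is essential and must be tracked carefully, and it is also the step where one should be slightly careful about whether $F$ is defined via a low-frequency-regularized harmonizable integral (the integral $\int\chi^{-\alpha}\,\d\chi$ diverges at $0$, so the fBm itself is realized through increments, i.e. via the kernel $(\e^{-i\chi x}-1)\chi^{-\alpha/2}$ rather than $\e^{-i\chi x}\chi^{-\alpha/2}$ alone). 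Managing this infrared issue so that $F$ is a bona fide fBm and $S$ is simultaneously smooth — all from one shared noise — is the crux of the argument; the high-frequency smoothness of $S$ and the increment computation for $F$ are then routine given Lemma \ref{(0,t)timesR}.
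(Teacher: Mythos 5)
Your spectral route has a genuine gap at its central step. You require three things at once: (i) $F$ and $S$ are built from the \emph{same} noise $W$ and $Z_t=\mathfrak{A}_\alpha F+S$ holds literally; (ii) $F$ has increment spectral density proportional to $|\chi|^{-\alpha}$; (iii) $S$ has spectral density proportional to $|\chi|^{-\alpha}\e^{-2t|\chi|^\alpha}$. No such pair exists. Spectral densities of two summands add only when the summands are uncorrelated, and in that case the density left over for $S$ would have to be
\[
	\frac{1-\e^{-2t|\chi|^\alpha}}{|\chi|^{\alpha}}-\frac{1}{|\chi|^{\alpha}}
	=-\frac{\e^{-2t|\chi|^\alpha}}{|\chi|^{\alpha}}<0,
\]
which is not a density: the minus sign in \eqref{D3} is precisely the obstruction. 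Equivalently, by Lemma \ref{(0,t)timesR} the increments of $Z_t$ have \emph{strictly smaller} variance than those of $\mathfrak{A}_\alpha F$, so $Z_t$ cannot equal an fBm plus anything uncorrelated with it. If instead you enforce the pathwise identity from the shared noise---define $F$ harmonizably and set $S:=Z_t-\mathfrak{A}_\alpha F$---then it is the \emph{kernels} that subtract, not the densities (since $\sqrt{a-b}\neq\sqrt a-\sqrt b$), and $S$ acquires increment density proportional to $|\chi|^{-\alpha}\bigl(1-\sqrt{1-\e^{-2t|\chi|^\alpha}}\,\bigr)^2$ rather than $|\chi|^{-\alpha}\e^{-2t|\chi|^\alpha}$. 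Your smoothness verification is therefore carried out on the wrong function, and it also contains a concrete error that signals the confusion: $|\chi|^{-\alpha}\e^{-2t|\chi|^\alpha}$ is \emph{not} bounded near $\chi=0$---it blows up like $|\chi|^{-\alpha}$; the density tamed at the origin by the cancellation $1-\e^{-2t|\chi|^\alpha}\sim 2t|\chi|^\alpha$ is that of $Z_t$ itself, i.e.\ the one you had already peeled away. (The kernel-difference variant can in fact be pushed through: high frequencies are controlled because $1-\sqrt{1-\e^{-2t|\chi|^\alpha}}=O(\e^{-2t|\chi|^\alpha})$, and the infrared singularity of the difference kernel is annihilated by differentiation; but that is a different computation from the one you wrote down.)

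The paper sidesteps the sign problem by enlarging the probability space instead of splitting the noise that generates $Z_t$: it sets $S(x):=\int_{(t,\infty)\times\R}\left[p_s(y)-p_s(y-x)\right]\xi(\d s\,\d y)$, a Wiener integral over the \emph{future} of the noise, so that $S$ is independent of $Z_t$ (independent of $Z_t$, not of $F$). For $B:=Z_t-S$, independence makes increment variances add, and the time integrals $\int_0^t$ and $\int_t^\infty$ splice to $\int_0^\infty$, which equals $\mathfrak{A}_\alpha^2|x-x'|^{\alpha-1}$ by \eqref{eq:1-cos}; hence $F:=B/\mathfrak{A}_\alpha$ is exactly a fractional Brownian motion with Hurst index $(\alpha-1)/2$, the identity $Z_t=\mathfrak{A}_\alpha F+S$ holds by construction, and smoothness of $S$ follows from the damping factor $\e^{-2t\chi^\alpha}$ in \eqref{eq:E(dS2)}. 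Note that the paper's $S$ has exactly the spectral density you assigned to yours; the difference is that it enters as an independent \emph{completion} of $Z_t$ up to fBm (i.e.\ $\mathfrak{A}_\alpha F=Z_t-S$ with $S$ independent of $Z_t$, the negative cross-covariance between $F$ and $S$ doing the bookkeeping for the minus sign), not as an orthogonal summand of $Z_t$. Some extra independent randomness is unavoidable if $S$ is to carry that particular density, and your insistence on a single shared noise $W$ is what forecloses it.
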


\begin{proof}
	The random field $S$ is defined explicitly as the following
	Wiener integral process:
	\begin{equation}
		S(x) := 
		\int_{(t,\infty)\times\R} \left[ p_s(y)-p_s(y-x)\right] \, \xi(\d s\,\d y)
		\qquad(x\in\R).
	\end{equation}
	In order to verify that $S$ is a well-defined Gaussian random field
	we proceed by applying the Plancherel theorem, such as we did when
	we developed \eqref{D2}, and find that
	\begin{equation}\begin{split}
		\E\left( \left| S(x) \right|^2\right) &= \int_t^\infty \d s\int_{-\infty}^\infty
			\d y\ \left| p_s(y) - p_s(y-x)\right|^2\\
		&=\frac{2}{\pi}\int_t^\infty\d s\int_0^\infty\d\chi\
			\e^{-2s\chi^\alpha}\left[ 1-\cos(x\chi)\right]\\
		&=\frac{1}{\pi}\int_0^\infty \e^{-2t\chi^\alpha}
			\left(\frac{1-\cos(x\chi)}{\chi^\alpha}\right)\d\chi<\infty.
	\end{split}\end{equation}
	This proves that $S$ is a mean-0 Gaussian random field. Similarly,
	one shows that its $n$th generalized
	derivative $\d^n S/\d x^n$ is the mean-0
	Gaussian random field 
	\begin{equation}
		\frac{\d ^n S}{\d x^n}(x) = (-1)^{n+1}\int_{(t,\infty)\times\R}
		\frac{\d ^n p_s}{\d x^n} (y-x)\, \xi(\d s\,\d y).
	\end{equation}
	And we verify that the preceding is a well-defined bona fide stochastic
	process by checking that
	\begin{equation}\begin{split}
		\E\left( \left| \frac{\d^n S}{\d x^n}(x)\right|^2\right)
			&= 	\int_t^\infty\d s\int_{-\infty}^\infty\d y\
			\left|\frac{\d ^n p_s}{\d x^n} (y)\right|^2\\
		&=\frac{1}{2\pi}\int_t^\infty\d s\int_{-\infty}^\infty\d\chi\
			|\chi|^{2n}\e^{-2s|\chi|^\alpha},
	\end{split}\end{equation}
	thanks to Plancherel's theorem. Therefore, for every $x\in\R$
	and $n\ge 1$,
	\begin{equation}\label{eq:E(dS2)}
		\E\left( \left| \frac{\d^n S}{\d x^n}(x)\right|^2\right)
		= \frac{1}{2\pi}\int_0^\infty
		\e^{-2t\chi^\alpha}\chi^{2n-\alpha}\,\d\chi<\infty.
	\end{equation}
	The remaining details involve making a few routine
	computations that are similar to some of the calculations that were
	made earlier in this section. We only discuss the case when $n=1$.
	Since, by the Wiener isometry, for any $x_2>x_1$,
	\begin{align}
		\E\left(\left|\frac{\d S}{\d x}(x_2)-\frac{\d S}{\d x}(x_1)\right|^2\right)
		&=\int_t^\infty\d s\int_\R\d y\,|p'_s(y-x_2)-p'_s(y-x_1)|^2\notag\\
		&=\frac{1}{\pi}\int_0^\infty\e^{-2t|\chi|^\alpha}
		\frac{1-\cos\chi(x_2-x_1)}{\chi^{\alpha-1}}\d\chi\\
		&=O(|x_2-x_1|^2),\notag
	\end{align}
	 by~\eqref{eq:e(1-cos)}. Therefore, by the Kolmogorov's continuity theorem, 
	 the distributional derivative $(\d S/\d x)$ of 
	 $S$ is a continuous  function [up to a modification, which we may
	 choose to adopt without further mention], whence
	$S\in C^1(\R)$ a.s. And a very similar estimate shows that
	if $S\in C^k(\R)$ for some integer $k\ge 1$, then
	$S\in C^{k+1}(\R)$ a.s.\ [up to a modification, once again]. This
	proves that $S$ is a.s.\ $C^\infty$. 
	
	Now that we have defined $S$, let us observe that $Z_t$ and $S$ are
	totally independent from one another [this has to do only with the independence
	properties of white noise]. Therefore, it follows
	that 
	\begin{equation}\label{eq:B=Z-S}
		B(x) := Z_t(x)-S(x) \qquad(x\in\R)
	\end{equation}
	defines a mean-zero Gaussian random field
	whose distribution is computed as follows: For every $x,x'\in\R$,
	\begin{equation}\begin{split}
		\E\left( \left| B(x)-B(x')\right|^2\right) &= 
			\E\left( \left| Z_t(x)-Z_t(x') \right|^2\right) + 
			\E\left( \left| S(x)-S(x') \right|^2\right)\\
		&=\int_0^\infty\d s\int_{-\infty}^\infty\d y\ 
			\left[ p_s(y-x) - p_s(y-x')\right]^2\\
		&= \frac{1}{\pi}\int_0^\infty\d s\int_{-\infty}^\infty\d\chi\
			\e^{-2s|\chi|^\alpha}\left[ 1-\cos((x-x')\chi)\right],
	\end{split}\end{equation}
	thanks to Plancherel's theorem. This and Fubini's theorem together yield
	\begin{equation}\begin{split}
		\E\left( \left| B(x)-B(x')\right|^2\right) 
			&= \frac1{\pi}\int_0^\infty
			\left(\frac{ 1-\cos(|x-x'|\chi)}{\chi^\alpha}\right)\d\chi\\
		&=\pi^{-1}|x-x'|^{\alpha-1}\int_0^\infty
			\left(\frac{ 1-\cos z}{z^\alpha}\right)\d z\\
		&= \frac{|x-x'|^{\alpha-1}}{2\Gamma(\alpha)\vert\cos(\alpha\pi/2)\vert}
			=[\mathfrak{A}_\alpha]^2|x-x'|^{\alpha-1};
	\end{split}\end{equation}
	see \eqref{A(alpha)} and \eqref{eq:1-cos}. In particular,
	$F(x) := B(x)/\mathfrak{A}_\alpha$ $(x\in\R)$
	defines a fractional Brownian motion with Hurst index
	$(\alpha-1)/2$; and property \eqref{eq:ZFS} follows from the
	construction in \eqref{eq:B=Z-S}.
\end{proof}

\section{Localization}
Throughout, let us choose and fix a parameter
\begin{equation}\label{eq:beta:gamma}
	\beta>1,\text{ and set }\gamma := 1 + \beta^{3/2}.
\end{equation}
Then, we define a family of  space-time boxes as follows: For every $x\in\R$
and $\varepsilon>0$,
\begin{equation}\label{B}
	\mathbf{B}_\beta(x\,,t\,;\varepsilon) := \left[ t-\beta\varepsilon^\alpha,t \right]\times 
	[x-\varepsilon\gamma\,,x+\varepsilon\gamma].
\end{equation}
When $\varepsilon\approx0$, the preceding describes a very small box
in space-time; see the figure below.

\begin{figure}[h!]
 \begin{tikzpicture}[line/.style={&gt;=latex}]
\fill [white] (14.5,-.26) coordinate (0,1) rectangle (21,1.4);    
\fill [orange!30] (17.57,1) coordinate (0,1) rectangle(19.47,1.37);                 
   \draw [->] (17,-1.5) -- (17,2) node [right] {};
      \draw [white, thick] (17,-1.5) -- (17,-.5) node [right] {};
            \draw [white, thick] (17,-1.5) -- (17,-.5) node [right] {};
      \draw  [->](13,-.27) -- (21,-.27) node [above right] {};
            \draw  [->, white, very thick](13,-.27) -- (14.5,-.27) node [above right] {};
              \draw  [->, white, thick](13,-.27) -- (14.5,-.27) node [above right] {};
            \draw  [dashed](17,1.4) -- (19.5,1.4) node [right] {};
\draw  [dashed](17,1) -- (19.5,1) node [right] {};
             \draw [dashed] (17.55,-.2) -- (17.55,1.3) node [right] {};
               \draw [dashed] (18.55,-.2) -- (18.55,1) node [right] {};
        \draw [dashed] (19.5,-.2) -- (19.5,1.3) node [right] {};
\draw(16.6,1.4) node {$t$};
\draw(16.4,1) node {$t-\beta\varepsilon^\alpha$};
\draw(17.6,-.6) node {$x-\varepsilon\gamma$};
\draw(19.6,-.6) node {$x+\varepsilon\gamma$};
\draw(18.55,1.4) node {$\bullet$};
\draw[](18.55,1.8) node {$(t\,,x)$};
\draw[](18.55,1.19) node {\textcolor{darkgray}{${\mathbf B}_\beta(x\,,t\,;\varepsilon)$}};
\end{tikzpicture}
\caption{A depiction of the localization region}
\end{figure}
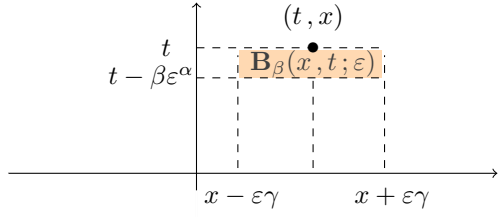

Let us recall the solution $Z$ to the linear equation (L-SHE), and 
the [approximate] gradient operator $\nabla_\varepsilon$ from \eqref{eq:nabla}.
We may observe that
\begin{equation}\label{eq:Z:SI}
	(\nabla_\varepsilon Z_t)(x) = Z_t(x) - Z_t(x-\varepsilon)=
	\int_{(0,t)\times\R}
	(\nabla_\varepsilon p_{t-s})(y-x)\, \xi(\d s\,\d y).
\end{equation}
The following is the main technical computation of this section.

\begin{proposition}[Localization of the gradient]\label{pr:localize:Z}
	There exists a finite and positive constant $A$---depending only on the
	value of $\alpha$---such that
	\begin{equation}
		\E\left( \left| (\nabla_\varepsilon Z_t)(x) - \int_{\mathbf{B}_\beta(x,t;\varepsilon) }
		(\nabla_\varepsilon p_{t-s})(y-x)\, \xi(\d s\,\d y)\right|^2\right) \le
		A\varepsilon^{\alpha-1}\beta^{-1/2},
	\end{equation}
	simultaneously for all $x\in\R$, $t>0$,
	$\varepsilon\in(0\,,1)$ and $\beta>1$.
\end{proposition}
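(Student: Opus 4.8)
The plan is to write the difference inside the expectation as a single Wiener integral over the part of the strip $(0,t)\times\R$ that lies outside the box, and then to invoke Wiener's isometry to reduce the whole statement to a deterministic $L^2$-computation. Subtracting the localized integral from \eqref{eq:Z:SI} gives
\[
	(\nabla_\varepsilon Z_t)(x)-\int_{\mathbf{B}_\beta(x,t;\varepsilon)}(\nabla_\varepsilon p_{t-s})(y-x)\,\xi(\d s\,\d y)
	=\int_{[(0,t)\times\R]\setminus\mathbf{B}_\beta(x,t;\varepsilon)}(\nabla_\varepsilon p_{t-s})(y-x)\,\xi(\d s\,\d y),
\]
so, after the isometry and the change of variables $r:=t-s$, $z:=y-x$ (which uses translation invariance of $p_r$), the quantity to be bounded becomes $\int\!\int(\nabla_\varepsilon p_r)(z)^2\,\d r\,\d z$ over the complement. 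I would split that complement into the disjoint union of the \emph{far-in-time} region $\{r>\beta\varepsilon^\alpha\}$ and the \emph{near-in-time, far-in-space} region $\{r\le\beta\varepsilon^\alpha,\ |z|>\varepsilon\gamma\}$, and bound the corresponding integrals $I_A$ and $I_B$ separately.

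For $I_A$ I would integrate in $z$ first via Plancherel, exactly as in the derivation of \eqref{D2} and using $\hat p_r(\chi)=\e^{-r|\chi|^\alpha}$ from \eqref{pihat}, to get $\int_\R(\nabla_\varepsilon p_r)(z)^2\,\d z=\pi^{-1}\int_\R\e^{-2r|\chi|^\alpha}[1-\cos(\chi\varepsilon)]\,\d\chi$. Integrating in $r$ over $(\beta\varepsilon^\alpha,\infty)$ contributes the factor $(2|\chi|^\alpha)^{-1}\e^{-2\beta\varepsilon^\alpha|\chi|^\alpha}$, and the substitution $z=\chi\varepsilon$ (as in \eqref{eq:1-cos}) then pulls out the scaling and leaves
\[
	I_A\le\frac{\varepsilon^{\alpha-1}}{\pi}\int_0^\infty\frac{\e^{-2\beta z^\alpha}[1-\cos z]}{z^\alpha}\,\d z.
\]
Splitting this integral at $z=1$, using $1-\cos z\le z^2/2$ on $(0,1)$ and $1-\cos z\le2$ on $(1,\infty)$, reduces the dominant part to $\int_0^\infty z^{2-\alpha}\e^{-2\beta z^\alpha}\,\d z$, which a Gamma-function evaluation shows equals a constant times $\beta^{-(3-\alpha)/\alpha}$; the tail over $z>1$ is exponentially small in $\beta$. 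Since $(3-\alpha)/\alpha\ge\nicefrac12$ exactly when $\alpha\le2$, this gives $I_A\le A_1\varepsilon^{\alpha-1}\beta^{-1/2}$, and this is the only place where the restriction $\alpha\in(1,2]$ is used.

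The main obstacle is $I_B$, where the spatial cutoff $|z|>\varepsilon\gamma$ forbids Plancherel and forces a direct encounter with the heavy polynomial tails of the stable kernel. Here I would bound the increment crudely, $(\nabla_\varepsilon p_r)(z)^2\le2[p_r(z)^2+p_r(z-\varepsilon)^2]$, and then use the scaling $p_r(z)=r^{-1/\alpha}p_1(zr^{-1/\alpha})$ together with the classical bound $p_1(w)\le C(1+|w|)^{-(1+\alpha)}$ for the symmetric $\alpha$-stable density. A substitution turns $\int_{|z|>\varepsilon\gamma}p_r(z)^2\,\d z$ into $r^{-1/\alpha}\int_{|w|>\varepsilon\gamma r^{-1/\alpha}}p_1(w)^2\,\d w$, where on $\{r\le\beta\varepsilon^\alpha\}$ the lower cutoff obeys $\varepsilon\gamma r^{-1/\alpha}\ge\gamma\beta^{-1/\alpha}\ge\beta^{3/2-1/\alpha}\ge1$; this is the precise sense in which the region is ``far in space.'' The tail bound then gives $\int_{|z|>\varepsilon\gamma}p_r(z)^2\,\d z\le C(\varepsilon\gamma)^{-(1+2\alpha)}r^2$, and integrating $r^2$ over $(0,\beta\varepsilon^\alpha)$ produces
\[
	I_B\le C\,\varepsilon^{\alpha-1}\gamma^{-(1+2\alpha)}\beta^3.
\]
The role of the choice $\gamma=1+\beta^{3/2}$ from \eqref{eq:beta:gamma} is now transparent: since $\gamma\ge\beta^{3/2}$ we have $\gamma^{-(1+2\alpha)}\beta^3\le\beta^{3/2-3\alpha}\le\beta^{-1/2}$ for every $\alpha>1$ and $\beta\ge1$, so $I_B\le A_2\varepsilon^{\alpha-1}\beta^{-1/2}$ as well. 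Adding $I_A$ and $I_B$ yields the claimed bound with $A:=A_1+A_2$, which depends only on $\alpha$; the estimate is uniform in $x$ by translation invariance and in $t$ because both $r$-integrals were enlarged to ranges free of $t$.
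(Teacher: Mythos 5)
Your proposal is correct, and its skeleton coincides with the paper's proof: Wiener's isometry, the identical splitting of $[(0,t)\times\R]\setminus\mathbf{B}_\beta$ into a far-in-time region and a near-in-time/far-in-space region, and Plancherel plus a Gamma-integral evaluation for the far-in-time piece (the paper bounds $1-\cos(\chi\varepsilon)\le(\chi\varepsilon)^2$ before integrating in time, you after; both land on $\beta^{-(3-\alpha)/\alpha}$). The genuine difference is the spatial-tail term. The paper bounds $\int_{|y|>\varepsilon\gamma}|p_s(y-\eta\varepsilon)|^2\,\d y$ by $\sup_z p_s(z)\cdot\P\{|X_s|>\varepsilon\beta^{3/2}\}$ and invokes the distributional tail bound $\P\{|X_1|>\lambda\}\le\text{const}\cdot\lambda^{-\alpha}$, which gives $Q_2\le\text{const}\cdot\varepsilon^{\alpha-1}\beta^{2-(1/\alpha)-(3\alpha/2)}$; you instead square the pointwise density bound $p_1(w)\le C(1+|w|)^{-(1+\alpha)}$ and integrate, obtaining the sharper exponent $\beta^{3/2-3\alpha}$. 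Both exponents are $\le-\nicefrac12$ on $\alpha\in(1,2]$, so either closes the argument; your route needs the (classical, and still valid at $\alpha=2$, where it is far from sharp) pointwise tail estimate for the stable density, while the paper needs only boundedness of the density together with the tail of the distribution function. One loose end in your write-up: after the bound $(\nabla_\varepsilon p_r)(z)^2\le2[p_r(z)^2+p_r(z-\varepsilon)^2]$, the shifted term is integrated over $|z|>\varepsilon\gamma$, which after the substitution $w=z-\varepsilon$ only guarantees $|w|>\varepsilon(\gamma-1)=\varepsilon\beta^{3/2}$; your cutoff verification $\varepsilon\gamma r^{-1/\alpha}\ge1$ covers only the unshifted term, and you should add that $\varepsilon(\gamma-1)r^{-1/\alpha}\ge\beta^{3/2-1/\alpha}\ge1$ as well, after which the same computation gives $\bigl(\varepsilon\beta^{3/2}\bigr)^{-(1+2\alpha)}\beta^3\varepsilon^{3\alpha}\le\varepsilon^{\alpha-1}\beta^{3/2-3\alpha}$ --- this is precisely the role of the ``$1+$'' in the paper's choice $\gamma=1+\beta^{3/2}$, which your unshifted-only verification does not yet exploit.
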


In the case that $\alpha=2$,
the preceding is similar to Lemma 3.6 of Hairer, Maas, and Weber \cite{HMW},
which has been used as a core of the solution theory of rough Burgers-like equations
\cite{H,Hairer,HM}. The novelty here is
the sharp description of the localization estimates in terms of the auxilliary parameter $\beta$.

\begin{proof}
	The Wiener isometry allows us to write
	\begin{equation}\begin{split}	
		Q:=&\ \E\left(\left| (\nabla_\varepsilon Z_t)(x) - \int_{\mathbf{B}_\beta(x,t;\varepsilon) }
			(\nabla_\varepsilon p_{t-s})(y-x)\, \xi(\d s\,\d y)\right|^2\right)\\
		=&\ \int_{[(0,t)\times\R]\setminus \mathbf{B}_\beta(0,t;\varepsilon)} 
			\left| (\nabla_\varepsilon p_{t-s})(y)
			\right|^2\, \d s\,\d y.
	\end{split}\end{equation}
	Let us  decompose $Q$ as
	\begin{equation}
		Q = Q_1 + Q_2,
	\end{equation}
	where
	\begin{equation}
		Q_1 := \int_0^{t-\beta\varepsilon^\alpha}\d s\int_{-\infty}^\infty
		\d y \left| \left(\nabla_\varepsilon p_{t-s}\right)(y)\right|^2,
	\end{equation}
	and
	\begin{equation}
		Q_2 := \int_{t-\beta\varepsilon^\alpha}^t\d s\int_{|y|>\varepsilon\gamma}
		\d y \left| \left(\nabla_\varepsilon p_{t-s}\right)(y)\right|^2.
	\end{equation}
	We estimate $Q_1$ and $Q_2$ in this order, since it is easier to bound $Q_1$.
	
	Because  $({\nabla_\varepsilon p_s})^{\widehat{}}\, (\chi) = \e^{-s|\chi|^\alpha} 
	(1-\e^{-i\chi\varepsilon}),$ Plancherel's theorem implies that
	\begin{align}\notag
		Q_1 &= \frac{1}{\pi}\int_{\beta\varepsilon^\alpha}^t\d s
			\int_{-\infty}^\infty\d\chi\ \e^{-2s|\chi|^\alpha}
			\left[ 1-\cos(\chi\varepsilon)\right]
			\le\frac{\varepsilon^2}{\pi}\int_{\beta\varepsilon^\alpha}^\infty\d s
			\int_{-\infty}^\infty\d\chi\ \e^{-2s|\chi|^\alpha}\chi^2\\
		&=\frac{\varepsilon^2}{2\pi} 
			\int_{-\infty}^\infty\e^{-2\beta\varepsilon^\alpha|\chi|^\alpha}
			|\chi|^{2-\alpha}\,\d\chi
			= \frac{\varepsilon^{\alpha-1}(2\beta)^{-(3-\alpha)/\alpha}}{\pi\alpha}
			\,\Gamma\left(\frac{3-\alpha}{\alpha}\right).
	\end{align}
	This is the desired bound for $Q_1$. The estimation of  $Q_2$ requires 
	a little more effort.
	
	First, we write
	\begin{align}
		Q_2 &=\int_0^{\beta\varepsilon^\alpha} \d s\int_{|y|>\varepsilon\gamma}
			\d y \left| (\nabla_\varepsilon p_s)(y)\right|^2\\\notag
		&\le 2\int_0^{\beta\varepsilon^\alpha} \d s\int_{|y|>\varepsilon\gamma}
			\d y \left| p_s(y)\right|^2 + 
			2\int_0^{\beta\varepsilon^\alpha} \d s\int_{|y|>\varepsilon\gamma}
			\d y \left| p_s(y-\varepsilon)\right|^2,
	\end{align}
	
	Next, let us recall that the inversion formula and symmetry together
	imply that
	$p_s(z) := \pi^{-1}\int_0^\infty\cos(z\chi)\exp\{-s\chi^\alpha\}\,\d\chi$,
	whence
	\begin{equation}\label{eq:p}
		\sup_{z\in\R}p_s(z) = p_s(0) 
		=\frac{\Gamma(1/\alpha)}{\alpha}\, s^{-1/\alpha}.
	\end{equation}
	In particular, if $0\le \eta\le1$ then
	\begin{equation}
		\int_{|y|>\varepsilon\gamma} |p_s(y-\eta\varepsilon)|^2\,\d y
		\le\frac{\Gamma(1/\alpha)}{\alpha}\, s^{-1/\alpha}
		\cdot\P\{|X_s|>\varepsilon(\gamma-\eta)\}.
	\end{equation}
	Since $\gamma=1+\beta^{3/2}$ [see \eqref{eq:beta:gamma}],
	we apply the triangle inequality to see that
	\begin{equation}
		\sup_{0\le\eta\le1}
		\int_{|y|>\varepsilon\gamma} |p_s(y-\eta\varepsilon)|^2\,\d y
		\le\frac{\Gamma(1/\alpha)}{\alpha}\, s^{-1/\alpha}
		\cdot\P\{|X_s|>\varepsilon\beta^{3/2}\}.
	\end{equation}
	By scaling, $\P\{|X_s|>\varepsilon\beta^{3/2}\}=
	\P\{|X_1|>\varepsilon\beta^{3/2}s^{-1/\alpha}\}$;
	and a well-known bound
	 on the tail of stable distributions shows that
	 $\P\{|X_1|>\lambda\}\le\text{const}\cdot\lambda^{-\alpha}$
	 for all $\lambda>0$, and hence
	 \begin{equation}
	 	\sup_{0\le\eta\le1}
		\int_{|y|>\varepsilon\gamma} |p_s(y-\eta\varepsilon)|^2\,\d y
		\le\text{const}\cdot s^{(\alpha-1)/\alpha} \varepsilon^{-\alpha}\beta^{-3\alpha/2}.
	 \end{equation}
	 We integrate this quantity from $s=0$ to $s=\beta\varepsilon^\alpha$
	 in order to see that
	 \begin{equation}
	 	Q_2 \le\text{const}\cdot\varepsilon^{\alpha-1}\beta^{2-(1/\alpha)-(3\alpha/2)}.
	 \end{equation}
	 
	 Finally, we can combine our bounds for $Q_1$ and $Q_2$
	 in order to deduce the inequality,
	 \begin{equation}
	 	Q \le \text{const}\cdot\varepsilon^{\alpha-1}\left[ \beta^{-(3-\alpha)/\alpha}
		+\beta^{2-(1/\alpha)-(3\alpha/2)}\right].
	 \end{equation}
	 Elementary analysis of the exponent of $\beta$ shows that
	 \begin{equation}
	 	\min\left( \frac{3-\alpha}{\alpha} \,, -2+\frac{1}{\alpha}+\frac{3\alpha}{2}\right) 
		\ge\frac12,
	 \end{equation}
	 because $1<\alpha\le 2$. Since $\beta>1$, it follows that 
	 $Q\le \text{const}\cdot \varepsilon^{\alpha-1}\beta^{-1/2}$,
	 as desired.
\end{proof}
Since Lemma \ref{(0,t)timesR} says that
$\E(|(\nabla_\varepsilon Z_t)(x)|^2)\approx \text{const}\cdot\varepsilon^{\alpha-1}$
when $\varepsilon\ll1$,
Proposition \ref{pr:localize:Z} shows that
\begin{equation}
	(\nabla_\varepsilon Z_t)(x) \approx \int_{\mathbf{B}_\beta(x\,,t\,;\varepsilon) }
	(\nabla_\varepsilon p_{t-s})(y-x)\, \xi(\d s\,\d y)
	\ \text{when $\beta\gg1$ and $\varepsilon\ll1$},
\end{equation}
to first order, where the approximation holds in $L^2(\P)$ and
has good uniformity in $(t\,,x\,,\varepsilon\,,\beta)$. 
A quick glance at \eqref{eq:Z:SI} then reveals the meaning of
Proposition \ref{pr:localize:Z}: If $\beta\gg1$ and $\varepsilon\ll1$, 
then most of the contribution to
the stochastic integral in \eqref{eq:Z:SI} comes from the region
$\mathbf{B}_\beta(x\,,t\,;\varepsilon) $. Because
$\mathbf{B}_\beta(x\,,t\,;\varepsilon) $ is a very small subset of $(0\,,t)\times\R$,
Proposition \ref{pr:localize:Z} is describing a \emph{strong
localization} property of the spatial gradient of $u$. 
We will prove that the subsequent results of this paper are
consequences of this localization property.
Let us illustrate the potential usefulness of Proposition \ref{pr:localize:Z}
by showing how it immediately implies a strong localization result for the 
spatial gradient of the solution
$u$ to the fully nonlinear heat equation (SHE).

\begin{corollary}\label{cor:localize:u}
	Choose and fix real numbers $T>0$ and $k\in[2\,,\infty)$. Then,
	there exists a finite constant $A$---depending only on $(\alpha\,,\beta\,,T,k)$---such
	that
	\begin{align}\notag
		&\E\left( \left| (\nabla_\varepsilon u_t)(x)
		 	- \int_{\mathbf{B}_\beta(x,t;\varepsilon) }(\nabla_\varepsilon p_{t-s})(y-x)
			\sigma(u_s(y))\, \xi(\d s\,\d y)\right|^k\right)\\
		&\hskip3in\le A\varepsilon^{(\alpha-1)k/2}\beta^{-k/4},
	\end{align}
	simultaneously for all $x\in\R$, $\varepsilon\in(0\,,1)$, $\beta>1$,
	and $t\in(0\,,T]$.
\end{corollary}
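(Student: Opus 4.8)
The plan is to reduce the nonlinear statement to the Gaussian kernel computation already carried out in Proposition~\ref{pr:localize:Z}, using a sharp form of the Burkholder--Davis--Gundy inequality to upgrade an $L^2$ bound on the kernel into an $L^k$ bound on the stochastic integral. First I would insert the mild formulation \eqref{mild} and apply the spatial difference operator $\nabla_\varepsilon$, which splits the increment as
\[
	(\nabla_\varepsilon u_t)(x) = \bigl(\nabla_\varepsilon(p_t*u_0)\bigr)(x)
	+ \int_{(0,t)\times\R}(\nabla_\varepsilon p_{t-s})(y-x)\,\sigma(u_s(y))\,\xi(\d s\,\d y).
\]
Subtracting the box integral from the corollary, the quantity to be estimated becomes the deterministic gradient $\bigl(\nabla_\varepsilon(p_t*u_0)\bigr)(x)$ plus the stochastic integral over the complementary region $[(0,t)\times\R]\setminus\mathbf{B}_\beta(x,t;\varepsilon)$. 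By the triangle inequality for $\|\cdot\|_k$ it is enough to control these two pieces separately.

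The deterministic piece is the minor one. Since $p_t$ lies in the Schwartz class for each fixed $t>0$ and $u_0$ is bounded, $p_t*u_0$ is smooth with bounded spatial derivative, so that $\bigl|\bigl(\nabla_\varepsilon(p_t*u_0)\bigr)(x)\bigr|=O(\varepsilon)$ uniformly in $x$; exactly as in the proof of Corollary~\ref{cor:Modulus:x}, this term is controlled uniformly for $t\in(0\,,T]$. Because $\varepsilon\le\varepsilon^{(\alpha-1)/2}$ for $\varepsilon\in(0\,,1)$ (recall $\alpha\le 2$), and because the constant $A$ in the statement is permitted to depend on $\beta$, this contribution is absorbed into the target bound $A\varepsilon^{(\alpha-1)k/2}\beta^{-k/4}$.

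The heart of the argument is the stochastic piece, and here I would apply the sharp Burkholder--Davis--Gundy inequality --- the same tool, as in \cite{FK}, that drove Corollary~\ref{cor:Modulus:x} --- to the Walsh--It\^o integral over the complement of the box, obtaining
\[
	\left\| \int_{[(0,t)\times\R]\setminus\mathbf{B}_\beta(x,t;\varepsilon)}
	(\nabla_\varepsilon p_{t-s})(y-x)\,\sigma(u_s(y))\,\xi(\d s\,\d y)\right\|_k^2
	\le c_k\int_{[(0,t)\times\R]\setminus\mathbf{B}_\beta(x,t;\varepsilon)}
	\left|(\nabla_\varepsilon p_{t-s})(y-x)\right|^2\,\|\sigma(u_s(y))\|_k^2\,\d s\,\d y.
\]
The at-most-linear growth of $\sigma$ together with the moment bound \eqref{eq:moments} yields $\sup_{s\in[0,T],\,y\in\R}\|\sigma(u_s(y))\|_k^2\le K_{k,T}<\infty$, so this factor pulls out of the integral. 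What then remains is precisely the deterministic kernel integral $Q$ computed inside the proof of Proposition~\ref{pr:localize:Z}, which that proposition bounds by $A\varepsilon^{\alpha-1}\beta^{-1/2}$. Consequently the $\|\cdot\|_k$ of the stochastic piece is at most a constant multiple of $\varepsilon^{(\alpha-1)/2}\beta^{-1/4}$, and raising to the $k$th power gives the claimed bound.

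The step requiring the most care --- and the only genuinely non-bookkeeping point --- is the invocation of the sharp BDG inequality: I would need to check that the constant $c_k$ is universal (independent of $x,t,\varepsilon,\beta$) and that factoring $\|\sigma(u_s(y))\|_k^2$ out of the integral is legitimate, i.e.\ that the uniform-in-$(s,y)$ moment control of $\sigma(u_s(y))$ truly holds over all of $[0\,,T]\times\R$. Everything else amounts to recognizing that the nonlinear localization integral is dominated, kernel by kernel, by the linear one already estimated in Proposition~\ref{pr:localize:Z}, so that no new oscillation or cancellation estimate is needed beyond what that proposition supplies.
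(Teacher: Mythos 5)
Your proposal is correct and follows essentially the same route as the paper: the same splitting via the mild formulation into a deterministic term of order $O(\varepsilon)$ plus a stochastic integral over the complement of $\mathbf{B}_\beta(x,t;\varepsilon)$, the same sharp Burkholder--Davis--Gundy bound with the uniform moment control $\sup_{s\in[0,T],\,y\in\R}\|\sigma(u_s(y))\|_k<\infty$ pulled out, and the same identification of the remaining kernel integral with the quantity bounded by $A\varepsilon^{\alpha-1}\beta^{-1/2}$ in Proposition~\ref{pr:localize:Z}. Nothing essential differs, and your exponent bookkeeping ($\beta^{-1/4}$ per $L^k$-norm, hence $\beta^{-k/4}$ after raising to the $k$th power) matches the stated corollary.
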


\begin{proof}

We begin by noting that 
	\begin{equation}\begin{split}
		&(\nabla_\varepsilon u_t)(x)\\ 
		&=[\nabla_\varepsilon (p_{t-s}*u_0 )](x)+
			\int_{(0,t)\times\R}(\nabla_\varepsilon p_{t-s})(y-x)
			\sigma(u_s(y))\, \xi(\d s\,\d y).
	\end{split}\end{equation}

	Next we observe that  $| ( \nabla_\varepsilon p_{t-s}*u_0 )(x) |
		\le \text{const}\cdot\varepsilon$; see the discussion at the 
		beginning of the proof of Corollary~\ref{cor:localize:u}.
	Therefore, because $\beta>1$, it suffices to prove that
	\begin{align}\notag
		&\E\left( \left| (\nabla_\varepsilon u_t)(x) -\left(\nabla_\varepsilon p_t*u_0
			\right)(x) - \int_{\mathbf{B}_\beta(x,t;\varepsilon) }(\nabla_\varepsilon p_{t-s})(y-x)
			\sigma(u_s(y))\, \xi(\d s\,\d y)\right|^k\right)\\
		&\hskip3in\le A\varepsilon^{(\alpha-1)k/2}\beta^{-k/2}.
	\end{align}
	Define
	\begin{equation}\begin{split}
		I_1 &:= \int_{(0,t)\times\R}(\nabla_\varepsilon p_{t-s})(y-x)
			\sigma(u_s(y))\, \xi(\d s\,\d y),\\
		I_2 &:= \int_{\mathbf{B}_\beta(x,t;\varepsilon) }(\nabla_\varepsilon p_{t-s})(y-x)
			\sigma(u_s(y))\, \xi(\d s\,\d y).
	\end{split}\end{equation}
	It remains to prove that
	\begin{equation}\label{goal:localize:u}
		\| I_1-I_2\|_k \le \text{const}\cdot \varepsilon^{(\alpha-1)/2}\beta^{-1/2}.
	\end{equation}
	
	An application of the Burkholder--Davis--Gundy inequality shows that
	\begin{equation}
		\|I_1-I_2\|_k^2 \le c\int_{[(0,t)\times\R]\setminus\mathbf{B}_\beta(x,t;\varepsilon) }
		\left| (\nabla_\varepsilon p_{t-s})(y-x)\right|^2
		\left\| \sigma(u_s(y))\right\|_k^2\, \d s\,\d y,
	\end{equation}
	uniformly for all $x\in\R$, $\beta>1$, $\varepsilon\in(0\,,1)$,
	and $t\in[0\,,T]$, where $c$ depends only on $k$ and $T$.
	[See Foondun and Khoshnevisan \cite{FK}
	for the details of this sort of argument.] 
	Thanks to \eqref{eq:moments} and the Lipschitz continuity
	of the function $\sigma$,  $C_k:=\sup_{s\in[0,T]}\sup_{y\in\R}
	\E(|\sigma(u_s(y))|^k)$ is finite. Consequently,
	\begin{equation}
		\E\left(\left| I_1-I_2\right|^k\right)
		\le c^{k/2}C_k\mathcal{T}^{k/2},
	\end{equation}
	where
	\begin{equation}\begin{split}
		\mathcal{T} &:=
			\int_{[(0,t)\times\R]\setminus\mathbf{B}_\beta(x,t;\varepsilon) }
			\left| (\nabla_\varepsilon p_{t-s})(y-x)\right|^2\,\d s\,\d y\\
		&=\E\left(\left|(\nabla_\varepsilon Z_t)(x) - \int_{\mathbf{B}_\beta(x,t;\varepsilon) }
			(\nabla_\varepsilon p_{t-s})(y-x)\, \xi(\d s\,\d y)\right|^2\right).
	\end{split}\end{equation}
	Therefore, Proposition \ref{pr:localize:Z} implies \eqref{goal:localize:u},
	whence the corollary.
\end{proof}

\section{Proof of Theorem \ref{th:Differentiate}}
Our proof of Theorem \ref{th:Differentiate} requires only one
more technical result.

\begin{lemma}\label{lem:localize:u:1}
	Choose and fix $T>0$ and $k\in[2\,,\infty)$. Then there exists a finite
	constant $A$ such that
	\begin{equation}\begin{split}\label{not-predictable}
		&\E\left(\left| \int_{\mathbf{B}_\beta(x,t;\varepsilon) }
			(\nabla_\varepsilon p_{t-s})(y-x)\left[ \sigma(u_s(y))-\sigma(u_t(\tilde x))\right]
			\xi(\d s\,\d y)\right|^k\right) \\
		&\hskip2.7in\le A\varepsilon^{(\alpha-1)k}\beta^{3(\alpha-1)k/4},
	\end{split}\end{equation}
	simultaneously for all $x\in\R$, 
	$\tilde x\in[x-\gamma\varepsilon\,, x+\gamma\varepsilon],$ 
	$t\in[0\,,T]$, $\varepsilon\in(0\,,1)$,
	and $\beta>1$.
\end{lemma}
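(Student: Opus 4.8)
The plan is to split the bracketed increment according to whether it reflects a change in space or a change in time, and to bound the two resulting stochastic integrals separately. Write
\[
\sigma(u_s(y))-\sigma(u_t(\tilde x))=\bigl[\sigma(u_s(y))-\sigma(u_s(\tilde x))\bigr]+\bigl[\sigma(u_s(\tilde x))-\sigma(u_t(\tilde x))\bigr],
\]
so that the integral in \eqref{not-predictable} decomposes as $I_{\mathrm{sp}}+I_{\mathrm{tm}}$, where $I_{\mathrm{sp}}$ carries the first bracket (a purely spatial increment at the frozen time $s$) and $I_{\mathrm{tm}}$ the second (a purely temporal increment at the frozen location $\tilde x$). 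Two ingredients will be used throughout: the Lipschitz bound $|\sigma(a)-\sigma(b)|\le\lip_\sigma|a-b|$, and the elementary estimate $\int_{\mathbf{B}_\beta(x,t;\varepsilon)}|(\nabla_\varepsilon p_{t-s})(y-x)|^2\,\d s\,\d y\le\int_{(0,t)\times\R}|(\nabla_\varepsilon p_{t-s})(y-x)|^2\,\d s\,\d y\le\text{const}\cdot\varepsilon^{\alpha-1}$, which is immediate from \eqref{eq:Z:SI} and Lemma \ref{(0,t)timesR}. I expect $I_{\mathrm{sp}}$ to produce the dominant power of $\beta$.

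The integrand of $I_{\mathrm{sp}}$ is predictable, since $u_s(y)$ and $u_s(\tilde x)$ are both measurable with respect to the noise up to time $s$. Hence the Burkholder--Davis--Gundy and Minkowski inequalities---exactly as in the proof of Corollary \ref{cor:localize:u}---give $\|I_{\mathrm{sp}}\|_k^2\le\text{const}\cdot\int_{\mathbf{B}_\beta(x,t;\varepsilon)}|(\nabla_\varepsilon p_{t-s})(y-x)|^2\,\|\sigma(u_s(y))-\sigma(u_s(\tilde x))\|_k^2\,\d s\,\d y$. On the box $|y-\tilde x|\le2\gamma\varepsilon$, so Corollary \ref{cor:Modulus:x} together with the Lipschitz property yields $\|\sigma(u_s(y))-\sigma(u_s(\tilde x))\|_k^2\le\text{const}\cdot(\gamma\varepsilon)^{\alpha-1}$; and since $\gamma=1+\beta^{3/2}\le2\beta^{3/2}$ this is at most $\text{const}\cdot\varepsilon^{\alpha-1}\beta^{3(\alpha-1)/2}$. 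Combining with the integral bound above gives $\|I_{\mathrm{sp}}\|_k^2\le\text{const}\cdot\varepsilon^{2(\alpha-1)}\beta^{3(\alpha-1)/2}$, whose $(k/2)$th power is precisely the right-hand side of \eqref{not-predictable}.

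The term $I_{\mathrm{tm}}$ is the crux, and its treatment is the step I expect to be the main obstacle: its integrand is \emph{not} predictable, because $\sigma(u_t(\tilde x))$ depends on the noise at the terminal time $t$, which lies above every $s$ in the box. The idea is to neutralize the anticipation by anchoring at the bottom of the box, $r_0:=t-\beta\varepsilon^\alpha$. Setting $W:=\int_{\mathbf{B}_\beta(x,t;\varepsilon)}(\nabla_\varepsilon p_{t-s})(y-x)\,\xi(\d s\,\d y)$ and adding and subtracting $\sigma(u_{r_0}(\tilde x))$, one obtains
\[
I_{\mathrm{tm}}=\int_{\mathbf{B}_\beta(x,t;\varepsilon)}(\nabla_\varepsilon p_{t-s})(y-x)\bigl[\sigma(u_s(\tilde x))-\sigma(u_{r_0}(\tilde x))\bigr]\,\xi(\d s\,\d y)-\bigl[\sigma(u_t(\tilde x))-\sigma(u_{r_0}(\tilde x))\bigr]\,W.
\]
The first integral now has a predictable integrand, so the Burkholder--Davis--Gundy argument applies; since $|s-r_0|\le\beta\varepsilon^\alpha$, Proposition \ref{pro:Modulus:t} and the Lipschitz property give $\|\sigma(u_s(\tilde x))-\sigma(u_{r_0}(\tilde x))\|_k^2\le\text{const}\cdot\varepsilon^{\alpha-1}\beta^{(\alpha-1)/\alpha}$, contributing a term of order $\varepsilon^{2(\alpha-1)}\beta^{(\alpha-1)/\alpha}$. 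For the remaining, genuinely anticipative, term I would not integrate: $W$ is a Wiener integral and hence Gaussian, so all its moments are comparable to $\|W\|_2$, and $\|W\|_2^2=\int_{\mathbf{B}_\beta(x,t;\varepsilon)}|(\nabla_\varepsilon p_{t-s})(y-x)|^2\,\d s\,\d y\le\text{const}\cdot\varepsilon^{\alpha-1}$. Cauchy--Schwarz then detaches the anticipative factor, $\|[\sigma(u_t(\tilde x))-\sigma(u_{r_0}(\tilde x))]W\|_k\le\|\sigma(u_t(\tilde x))-\sigma(u_{r_0}(\tilde x))\|_{2k}\,\|W\|_{2k}$, and Proposition \ref{pro:Modulus:t} (now with $|t-r_0|=\beta\varepsilon^\alpha$) bounds the first factor by $\text{const}\cdot\varepsilon^{(\alpha-1)/2}\beta^{(\alpha-1)/(2\alpha)}$; the product again has order $\varepsilon^{2(\alpha-1)}\beta^{(\alpha-1)/\alpha}$ in $\|I_{\mathrm{tm}}\|_k^2$.

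Finally, I would combine the three estimates by the triangle inequality in $L^k(\P)$ and take $(k/2)$th powers. The exponent of $\beta$ is governed by the larger of $3(\alpha-1)/2$ and $(\alpha-1)/\alpha$; since $1<\alpha\le2$ forces $3(\alpha-1)/2\ge(\alpha-1)/\alpha$ and $\beta>1$, the spatial contribution dominates, yielding the claimed factor $\varepsilon^{(\alpha-1)k}\beta^{3(\alpha-1)k/4}$. The anchoring at $r_0$ is the device that converts the non-predictable product $\sigma(u_t(\tilde x))W$ into a small temporal increment times a Gaussian factor, each estimable by elementary means, thereby avoiding any appeal to anticipative stochastic calculus.
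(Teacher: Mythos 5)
Your proof is correct and follows essentially the same route as the paper: the crucial device---anchoring at the bottom of the box, $r_0=t-\beta\varepsilon^\alpha$, so that the anticipative factor becomes the small temporal increment $\sigma(u_t(\tilde x))-\sigma(u_{r_0}(\tilde x))$ multiplying a Gaussian Wiener integral (handled by Cauchy--Schwarz and Gaussian moment equivalence), while the predictable parts are handled by Burkholder--Davis--Gundy together with Corollary \ref{cor:Modulus:x} and Proposition \ref{pro:Modulus:t}---is exactly the paper's $Q_1$/$Q_2$ decomposition. The only difference is cosmetic: you separate the spatial and temporal increments into two predictable integrals before anchoring (three terms instead of two), whereas the paper bounds the combined increment $u_s(y)-u_{t-\beta\varepsilon^\alpha}(\tilde x)$ in a single BDG application using both modulus estimates at once; the resulting exponents of $\varepsilon$ and $\beta$ are identical.
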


\begin{proof}
	This lemma is based on an application of the Burkholder--Davis--Gundy
	inequality. However, a small technical problem crops up when we try to 
	apply that inequality. Namely, that the quantity $\sigma(u_t(\tilde{x}))$
	that appears in \eqref{not-predictable} truly
	depends on the white noise by time $t$. At the same time,
	the stochastic integral of interest is computed over the set
	$\mathbf{B}_\beta(x,t;\varepsilon)\subset[0\,,t]\times\R$,
	and therefore also depends on the white noise by time $t$. In other words,
	\begin{equation}
		\int_{\mathbf{B}_\beta(x,t;\varepsilon)}\sigma(u_t(\tilde{x}))\,\xi(\d s\,\d y)
	\end{equation}
	is \emph{not} a Walsh integral of a predictable process [viewed as a function of
	$t$]; rather it is merely equal to 
	\begin{equation}
		\sigma(u_t(\tilde{x}))\cdot
		\int_{\mathbf{B}_\beta(x,t;\varepsilon)}\xi(\d s\,\d y);
	\end{equation}
	that is, a product of two correlated quantities.
	Thus, we consider first a related quantity $Q_1^{k/2}$, where
	\begin{equation}
		Q_1 :=\left\| \int_{\mathbf{B}_\beta(x,t;\varepsilon) }
			(\nabla_\varepsilon p_{t-s})(y-x)\left[ \sigma(u_s(y))-
			\sigma(u_{t-\beta\varepsilon^\alpha}(\tilde x))\right]
			\xi(\d s\,\d y)\right\|_k^2.
	\end{equation}
	Since $u_{t-\beta\varepsilon^\alpha}(\tilde{x})$ is measurable with
	respect to the white noise of $[0\,,t-\beta\varepsilon^\alpha]\times\R$,
	it is independent of the white noise of $\mathbf{B}_\beta(x,t;\varepsilon)$.
	Therefore, we may apply the Burkholder--Davis--Gundy inequality---%
	see Foondun and Khoshnevisan \cite{FK} for the details of this application---in
	order to see that
	\begin{equation}\begin{split}
		Q_1 &\le c\int_{\mathbf{B}_\beta(x,t;\varepsilon) }
			\left|(\nabla_\varepsilon p_{t-s})(y-x)\right|^2
			\left\|\sigma(u_s(y))-\sigma(u_{t-\beta\varepsilon^\alpha}(\tilde x))\right\|_k^2
			\d s\,\d y\\
		&\le c\lip^2_\sigma\int_{\mathbf{B}_\beta(x,t;\varepsilon) }
			\left|(\nabla_\varepsilon p_{t-s})(y-x)\right|^2
			\left\|u_s(y)-u_{t-\beta\varepsilon^\alpha}(\tilde x)\right\|_k^2
			\d s\,\d y,
	\end{split}\end{equation}
	where $c$ is some constant depending on $k$ and $T$. 
	Let $\tilde x\in[x-\gamma\varepsilon\,,x+\gamma\varepsilon]$.
	Corollary \ref{cor:Modulus:x} and Proposition
	\ref{pro:Modulus:t} together imply the bound
	\begin{align}\notag
		Q_1&\le K\int_{\mathbf{B}_\beta(x,t;\varepsilon) }
			\left|(\nabla_\varepsilon p_{t-s})(y-x)\right|^2
			\left[|\tilde x-y|^{\alpha-1}+|t-\beta\varepsilon^\alpha-s|^{(\alpha-1)/\alpha}
			\right] \d s\,\d y\\
		&\le K\varepsilon^{\alpha-1}[2\gamma+\beta^{1/\alpha}]^{\alpha-1} 
			\int_{\mathbf{B}_\beta(x,t;\varepsilon)}
			\left|(\nabla_\varepsilon p_{t-s})(z)\right|^2 
			\d s\,\d z,
	\end{align}
	where $K$ depends only on $\alpha\,,k$ and $T$. In the preceding, the $\d z$-integral
	ranges over $z\in[x-\varepsilon\gamma\,,x+\varepsilon\gamma]$. 
	If we replace $\mathbf{B}_\beta(x\,,t\,;\varepsilon) $ by $(0\,,t)\times\R$,
	then we obtain the bound
	\begin{equation}
		Q_1 \le K\varepsilon^{\alpha-1}[2\gamma+\beta^{1/\alpha}]^{\alpha-1}\E\left( 
		\left| (\nabla_\varepsilon Z_t)(0)\right|^2\right)
		\le K'\varepsilon^{(\alpha-1)}[2\gamma+\beta^{1/\alpha}]^{\alpha-1},
	\end{equation}
	thanks to Lemma~\ref{(0,t)timesR}. 
	
	Next we estimate the cost of estimating $u_t(\tilde{x})$
	by $u_{t-\beta\varepsilon^\alpha}(\tilde{x})$. Indeed,
	by the H\"older inequality, 
	\begin{align}\notag
		Q_2&:=\left\|\left[\sigma(u_t(\tilde x))-
			\sigma(u_{t-\beta\varepsilon^\alpha}(\tilde x))\right]
			\int_{\mathbf{B}_\beta(x,t;\varepsilon) }(\nabla_\varepsilon p_{t-s})(y-x)
			\xi(\d s\,\d y)\right\|_k^2\\\notag
		&\leq\lip_\sigma^2\left\|u_t(\tilde x) - 
			u_{t-\beta\varepsilon^\alpha}(\tilde x)\right\|_{2k}^2
			\left\|\int_{\mathbf{B}_\beta(x,t;\varepsilon) }(\nabla_\varepsilon p_{t-s})(y-x)
			\xi(\d s\,\d y)\right\|_{2k}^2\\\notag
		&\leq K\beta^{(\alpha-1)/\alpha}\varepsilon^{(\alpha-1)}
			\int_{\mathbf{B}_\beta(x,t;\varepsilon)}
			\left|(\nabla_\varepsilon p_{t-s})(z)\right|^2 
			\d s\,\d z\\
		&\leq K\beta^{(\alpha-1)/\alpha}\varepsilon^{2(\alpha-1)}.
	\end{align}
	Since $\gamma = 1+\beta^{3/2}\le2\beta^{3/2}$ 
	and $\beta^{1/\alpha}\leq \beta^{3/2}$, we can conclude that 
	\begin{align}
		\sqrt{Q_1}+\sqrt{Q_2}\leq c\,\varepsilon^{(\alpha-1)}\beta^{3(\alpha-1)/4}.
	\end{align}
	This and Minkowski's inequality together imply the lemma.
\end{proof}

Now we conclude our first main effort.

\begin{proof}[Proof of Theorem \ref{th:Differentiate}]
	We begin the proof by looking at the following quantity:
	\begin{equation}\label{eq:E(error^k)}
			\sup_{x\in\R}\sup_{t\in[0,T]}
			\| (\nabla_\varepsilon u_t)(x) 
			-\mathfrak{A}_\alpha\sigma(u_t(\tilde x))(\nabla_\varepsilon F)(x)
			\|_k;
		\end{equation}
	where $\tilde x\in[x-\gamma\varepsilon\,,x+\gamma\varepsilon]$.
	Let us first split the preceding expectation in two parts as follows:
	\begin{eqnarray*}
		\| (\nabla_\varepsilon u_t)(x) 
		-\mathfrak{A}_\alpha\sigma(u_t(x))(\nabla_\varepsilon F)(x)\|_k\le I_1+I_2;
	\end{eqnarray*}	
	where 
	\begin{equation}\label{eq:error}
		I_1 := \sup_{x\in\R}\sup_{t\in[0,T]}\left\| (\nabla_\varepsilon u_t)(x)
		-\sigma(u_t(\tilde x))(\nabla_\varepsilon Z_t)(x) \right\|_k;
	\end{equation}	
	and 
	\begin{equation}\label{I_2}
		I_2 :=\sup_{x\in\R}\sup_{t\in [0, T]}\|\sigma(u_t(\tilde x)) (\nabla_\varepsilon Z_t)(x) 
		-\mathfrak{A}_\alpha\sigma(u_t(\tilde x))(\nabla_\varepsilon F)(x)\|_k.
	\end{equation}	
	
	Consider first the quantity $I_1$. The cost of replacing 
	$\sigma(u_t(\tilde x))$ by $\sigma(u_t(x))$ is controlled 
	by the triangle inequality, which yields the bound
	\begin{equation}\label{holder}\begin{split}
		\|(\nabla_\varepsilon Z_t)(x)[\sigma(u_t(\tilde x))&-\sigma(u_t(x))]\|_k\\
		&\leq\|(\nabla_\varepsilon Z_t)(x)\|_{2k}\lip_\sigma\|u_t(\tilde x)-u_t(x)\|_{2k}\\
		&\leq A\varepsilon^{\alpha-1}.
	\end{split}\end{equation}
	[The last inequality follows from Lemma \ref{(0,t)timesR} and Corollary
	\ref{cor:Modulus:x}.]
	Lemma \ref{lem:localize:u:1} and Corollary \ref{cor:localize:u} together
	imply that
	\begin{equation}\label{b4opt}\begin{split}
		&\left\| (\nabla_\varepsilon u_t)(x)
		-\sigma(u_t(x))(\nabla_\varepsilon Z_t)(x) \right\|_k \\
		&\hskip1.7in\le c'\,\varepsilon^{\alpha-1}\beta^{3(\alpha-1)/4} + 
		c'\,\varepsilon^{(\alpha-1)/2}\beta^{-1/4},
	\end{split}\end{equation}
	where $c'$ denotes a finite constant that does not depend on
	the values of $x\in\R$,
	$t\in[0\,,T]$, and $\varepsilon\in(0\,,1)$. 
	Define $\beta := \varepsilon^{-2b}>1$. Since the
	left-hand side of \eqref{b4opt} does not depend on $\beta$, 
	we can optimize the right-hand side over $b>0$, 
	to find that the best bound in~\eqref{b4opt} is attained when
	\begin{equation}\label{b}
		b :=\frac{\alpha-1}{3\alpha-2}.
	\end{equation}
	This particular choice yields
	\begin{equation}\label{eq:error1}
		\left\| (\nabla_\varepsilon u_t)(x)
		-\sigma(u_t(x))(\nabla_\varepsilon Z_t)(x) \right\|_k
		\le c\,\varepsilon^{(\alpha-1+b)/2}.
	\end{equation}	
	Because $(\alpha-1+b)/2<\alpha-1$, we deduce from
	\eqref{holder} and \eqref{eq:error1} the following bound for $I_1$:
	\begin{equation}\label{eq:error}
		I_1\le c\,\varepsilon^{(\alpha-1+b)/2}.
	\end{equation}
	
	In order to bound $I_2$, we appeal to Proposition \ref{pr:fBm} and write
	\begin{equation}
		Z_t(x) = \mathfrak{A}_\alpha
		F(x) + S(x)\qquad(x\in\R),
	\end{equation}
	where $F$ is fBm($(\alpha-1)/2$) and $S$ is a mean-zero Gaussian process
	with $C^\infty$ trajectories. In accord with \eqref{eq:E(dS2)}, there
	exists a finite constant $A$ such that $\|(\nabla_\varepsilon S)(x)\|_2\le A\varepsilon$
	simultaneously for all $x\in\R$ and $\varepsilon>0$. Because the variance of a mean-zero
	Gaussian random variable determines all of its moments, we can find for all $k\in[2\,,\infty)$
	a finite constant $A_k$ such that
	\begin{equation}
		\sup_{x\in\R}\E \left(  | (\nabla_\varepsilon S)(x)  |^k \right) \le A_k
		\varepsilon^k,
	\end{equation}
	for all $\varepsilon>0$.
	Equivalently,
	\begin{equation}
		\sup_{x\in\R}\E\left( \left| (\nabla_\varepsilon Z_t)(x) 
		-\mathfrak{A}_\alpha(\nabla_\varepsilon F)(x)
		\right|^k\right) \le A_k \varepsilon^k,
	\end{equation}
	uniformly for all $\varepsilon>0$. Since $\sup_{t\in[0,T]}\sup_{x\in\R}
	\E(|\sigma(u_t(x))|^2 )<\infty$ for all $T>0$, the Cauchy--Schwarz inequality
	yields the following bound: For all $k\in[2\,,\infty)$ and $T>0$, there exists
	a finite constant $A'$ such that
	$I_2 \le A' \varepsilon$,
	uniformly for all $\varepsilon>0$.  
	Because $(\alpha-1+b)/2<1$, we then deduce from \eqref{eq:error} that
	for all $\varepsilon\in(0\,,1)$,	
	\begin{equation}
		I_1+I_2\leq  A'\,\varepsilon^{(\alpha-1+b)/2}\qquad
		(0<\varepsilon<1).
	\end{equation}
	Equivalently,
	\begin{equation}\label{eq:E(error^k)}
		\sup_{x\in\R}\sup_{t\in[0,T]}
		\E\left( \left| (\nabla_\varepsilon u_t)(x) 
		-\mathfrak{A}_\alpha\sigma(u_t(\tilde x))(\nabla_\varepsilon F)(x)
		\right|^k\right) \le A'\,\varepsilon^{k(\alpha-1+b)/2},
	\end{equation}
	where $\tilde x\in[x-\gamma\varepsilon\,,x+\gamma\varepsilon]$, 
	and the finite constant $A'$ does not depend on $\varepsilon$.
	
	We are now ready to complete the proof of Theorem \ref{th:Differentiate}.
	Set $\tilde x:=x$. Thanks to \eqref{eq:E(error^k)} and
	Chebyshev's inequality, for all $\zeta\in(0\,, b)$ and $\varepsilon\in(0\,,1)$,
	\begin{equation}
		\sup_{x\in\R}
		\P\left\{\left| \frac{(\nabla_\varepsilon u_t)(x)}{(\nabla_\varepsilon F)(x)}	
		- \mathfrak{A}_\alpha\sigma(u_t(x))
		\right|>\frac{\varepsilon^{(\alpha-1+\zeta)/2}}{|(\nabla_\varepsilon F)(x)|}\right\}
		\le A'\varepsilon^{k(b-\zeta)/2}.
	\end{equation}
	The preceding makes sense because $(\nabla_\varepsilon F)(x)$ is a.s.\  non zero,
	as it is a centered Gaussian random variable. This concludes the proof because
	it is easy to see that $(\nabla_\varepsilon F)(x)$ has the same distribution as
	$F(\varepsilon)$, regardless of the value of $x\in\R$,
	and the latter random variable has the same
	law as $\varepsilon^{(\alpha-1)/2}$ times a  standard Gaussian random variable $\mathcal N$, whence
	\begin{equation}
		\lim_{\varepsilon\downarrow 0}
		\P\left\{\frac{\varepsilon^{(\alpha-1+\zeta)/2}}{|(\nabla_\varepsilon F)(x)|}
		>\lambda\right\} = \lim_{\varepsilon\downarrow 0}
		\P\left(|\mathcal N|<\frac{\varepsilon^{\zeta/2}}{\lambda}\right)
		= 0,
	\end{equation}
	uniformly in $x\in\R$ and for all $\lambda>0$.
\end{proof}

\section{Proof of Corollary \ref{cor:LIL}}
Fix $x\in \R$ and $t>0$. We shall prove the following
``strong approximation'' result: 
Choose and fix $q\in(0\,,b)$, where $b$ is defined by~\eqref{b}. Then, with probability one,
\begin{equation}\label{eq:SA}
	\left| (\nabla_\varepsilon u_t)(x) - \mathfrak{A}_\alpha \sigma(u_t(x))(
	\nabla_\varepsilon F)(x)\right|= o\left( \varepsilon^{(\alpha-1+q)/2}\right),
\end{equation}
as $\varepsilon\downarrow 0$.

Corollary \eqref{cor:LIL}
is then a ready consequence of \eqref{eq:SA} and the law of the iterated logarithm for
fractional Brownian motion, which itself follows fairly readily
from Theorem 1.1 of \cite{QuallsWatanabe}.

Let us define a stochastic process $G$ via
\begin{align}
	G(\varepsilon)=u_t(x-\varepsilon)-\mathfrak{A}_\alpha\sigma(u_t(x))F(x-\varepsilon)
	\qquad(0\le \varepsilon\le x).
\end{align}
For all $x\ge \nu\ge \varepsilon\geq0$, we let $r:=\nu-\varepsilon$ and see that
\begin{align}
	|G(\varepsilon)-G(\nu)|&=
	\big|\nabla_{r}u_t(x-\varepsilon)-
	\mathfrak{A}_\alpha\sigma(u_t(x))\nabla_{r}F(x-\varepsilon)\big|.
\end{align}
Therefore, if we let $z:=x-\varepsilon$, then
\begin{equation}\begin{split}
	\|G(\varepsilon)-G(\nu)\|_k &=
		\big\|\nabla_{r}u_t(z)-
		\mathfrak{A}_\alpha\sigma(u_t(z+\varepsilon))\nabla_{r}F(z)\big\|_k\\
	&\leq A'r^{(\alpha-1+b)/2},
\end{split}\end{equation}
thanks to \eqref{eq:E(error^k)}. By the 
Kolmogorov's continuity theorem, $G$ is locally a H\"older-continuous process 
with any index $a\in(0\,,\frac12(\alpha-1)+b-k^{-1})$. 
Recall from~\eqref{b} that $b>0$ for all $\alpha>1$, and that $k$ is arbitrary. 
Therefore, by choosing $k$ sufficiently large, we find $q:= b-k^{-1}>0$.
Next we combine the law of the iterated logarithm for the 
fBm $F$ with~\eqref{eq:SA} to finish the proof.\qed

\section{Proof of Corollary \ref{a.s.}}
Define, for all $s>0$,
\begin{equation}\label{X_s}
	X_s:=\int_0^s\left| \frac{(\nabla_\varepsilon u_t)(x)}{(\nabla_\varepsilon F)(x)}
	 - \mathfrak{A}_\alpha \sigma(u_t(x))\right|\d\varepsilon.
\end{equation}
Choose and fix $q\in(0\,,b)$, where $b$ is defined by~\eqref{b}. 
Then, Minkowski's inequality and \eqref{eq:SA} together imply that,
with probability one,
\begin{equation}
	X_s =o\left(\int_0^s\frac{\varepsilon^{(\alpha-1+q)/2}}{%
	|(\nabla_\varepsilon F)(x)|}
	\,\d\varepsilon\right)\qquad(s\downarrow0).
\end{equation}
Since $\varepsilon^{(\alpha-1)/2}\nabla_\varepsilon F$ has a 
standard normal distribution, Minkowski's inequality
guarantees that for every $r\in(0\,,1)$ and $s>0$,
\begin{equation}
	\E(|X_s|^r)
	\leq \E(\mathcal{N}^{-r})\left|\int_0^s\varepsilon^{q/2}
	\,\d\varepsilon\right|^r
	= c s^{r(2+q)/2},
\end{equation}	
where $c\in(0\,,\infty)$ depends only on $r$, and
$\mathcal{N}$ is a random variable with the standard normal distribution. 
Therefore, by the Chebyshev inequality, 
\begin{equation}
	\P\left\{|X_{2^{-n}}|>\lambda 2^{-n\tau}\right\}
	\leq c2^{-nr(1+(q/2)-\tau)}\lambda^{-r},
\end{equation}
for all $\lambda,\tau>0$ and integers $n\ge 0$.
If we let $\tau<1+(q/2)$, then we can see from the Borel-Cantelli lemma that
\begin{equation}\label{dyadicLim}
	 \lim_{n\to\infty}2^{n\tau}X_{2^{-n}}=0
	 \qquad\text{a.s.}
\end{equation}
If $s$ is a real number in $[2^{-n-1},\,2^{-n}]$
for some integer $n\ge 0$, then $s^{-\tau}X_s \le 2^{\tau(n+1)}X_{2^{-n}}$.
Therefore,
\eqref{dyadicLim} implies that $X_s=o(s^\tau)$ a.s.,
as was claimed.
\qed

\section{Proof of Corollary \ref{cor:CLT}} 
Throughout this proof we write
\begin{equation}
	\beta := \beta(\varepsilon) := \log(1/\varepsilon),
\end{equation}
and
\begin{equation}
	\mathcal{I}_\varepsilon(x\,,t) := \int_{\mathbf{B}_\beta(x,t;\varepsilon)}
	(\nabla_\varepsilon p_{t-s})(y-x)\,\xi(\d s\,\d y),
\end{equation}
for the sake of notational simplicity.

The proof of Corollary \ref{cor:CLT} hinges on
the work that has been developed so far, together with a rather crude estimate
on the temporal modulus of continuity of $u$, namely~\ref{pro:Modulus:t}.
See also \cite[Theorem 2]{DebbiDozzi}. 
We  need to know only that the following holds for every $k\in[2\,,\infty)$ and $T>0$ fixed:
Uniformly for all $\varepsilon\in(0\,,1)$,
\begin{equation}
	\left\| \left\{\sigma(u_t(x))-\sigma(u_{t- \varepsilon^\alpha}(x))\right\}\cdot
	\mathcal{I}_\varepsilon(x\,,t)\right\|_k
	\le \text{const}\cdot \varepsilon^{(\alpha-1)/2}
	\left\| \mathcal{I}_\varepsilon(x\,,t)\right\|_{2k},\label{that}
\end{equation}
thanks to the Cauchy--Schwarz inequality
and the already-mentioned fact that $\sup_{x\in\R}\sup_{t\in[0,T]}
\E(|\sigma(u_t(x))|^k)<\infty$. We apply Proposition \ref{pr:localize:Z} next in order
to see that
\begin{equation}\begin{split}
	\left\| \mathcal{I}_\varepsilon(x\,,t)\right\|_{2k}
		&\le\textnormal{const}\cdot\sqrt{\frac{\varepsilon^{\alpha-1}}{\log(1/\varepsilon)}}
		+ \left\|(\nabla_\varepsilon Z_t)(x)\right\|_k\\
	&\le \text{const}\cdot\varepsilon^{(\alpha-1)/2}.
	\label{that2}
\end{split}\end{equation}
In the last two inequalities, we have used Lemma \ref{(0,t)timesR} and
also the fact that the $L^2$-norm of a Gaussian random variable
determines all of its $L^k$-norms. This can be combined with \eqref{that},
Corollary \ref{cor:localize:u},
and Lemma \ref{lem:localize:u:1} in order to show that
\begin{equation}\label{eq:localize:u:2}
	\left\| (\nabla_\varepsilon u_t)(x) - \sigma(u_{t-\beta\varepsilon^\alpha}(x))
	\mathcal{I}_\varepsilon(x\,,t)\right\|_k
	\le \text{const}\cdot \frac{\varepsilon^{(\alpha-1)/2}}{\left|\log(1/\varepsilon)\right|^{1/4}},
\end{equation}
uniformly for $x\in\R$, $t\in[0\,,T]$, and $\varepsilon\in(0\,,1)$.
Consequently, we see that $\varepsilon^{-(\alpha-1)/2}(\nabla_\varepsilon u_t)(x)$
and $\varepsilon^{-(\alpha-1)/2}\sigma(u_{t-\beta\varepsilon^\alpha}(x))
\mathcal{I}_\varepsilon(x\,,t)$ have the same asymptotic
behavior. 

Next let us observe that, because of the independence properties of space-time 
white noise, $\sigma(u_{t-\beta\varepsilon^\alpha}(x))$ is independent of
$\varepsilon^{-(\alpha-1)/2}\mathcal{I}_\varepsilon(x\,,t)$, and converges to
$\sigma(u_t(x))$ a.s.\ as $\varepsilon\downarrow 0$, by continuity. [The requisite
continuity is assured by Proposition~\ref{pro:Modulus:t} and the Kolmogorov continuity theorem.]
Therefore, the asymptotic behavior of $\varepsilon^{-(\alpha-1)/2}
(\nabla_\varepsilon p_{t-s})(y-x)$ is the same as the asymptotic behavior of
$\sigma(\tilde{u}_t(x))\times \varepsilon^{-(\alpha-1)/2}\mathcal{I}_\varepsilon
(x\,,t)$, where $\tilde{u}$ is independent
of $\mathcal{I}_\varepsilon(x\,,t)$ and has the same law as $u$. Proposition
\ref{pr:localize:Z} assures that the asymptotic behavior of
$\varepsilon^{-(\alpha-1)/2}\mathcal{I}_\varepsilon(x\,,t)$ is
the same as that of $\varepsilon^{-(\alpha-1)/2}(\nabla_\varepsilon Z_t)(x)$.
According to Proposition \ref{pr:fBm},
\begin{equation}
	\varepsilon^{-(\alpha-1)/2}(\nabla_\varepsilon Z_t)(x)=\mathfrak{A}_\alpha
	\varepsilon^{-(\alpha-1)/2}(\nabla_\varepsilon F)(x) + O\left( \varepsilon^{(3-\alpha)/2}\right)
	\qquad\text{a.s.},
\end{equation}
and Corollary \ref{cor:CLT} follows from the defining property of fBm.
In this case, that property implies that $\varepsilon^{-(\alpha-1)/2}
(\nabla_\varepsilon F)(x)$ has a standard normal law.\qed

\section{Proof of Corollary \ref{cor:VAR}}
Throughout this proof, we use the short-hand notation:
\begin{equation}\label{epsbeta}
	\varepsilon := \varepsilon(n) := 2^{-n}
	\quad\text{and}\quad
	\beta :=\beta(n) := n^{16/(\alpha-1)}.
\end{equation}

We wish to prove that
\begin{equation}\begin{split}
	&\lim_{n\to\infty} \sum_{a\le j\varepsilon \le b} 
		\phi( u_t (j \varepsilon)) \left| (\nabla_\varepsilon u_t)((j+1)\varepsilon)
		\right|^{2/(\alpha-1)}\\
	&\hskip1.7in
		=\mathfrak{B}_\alpha \int_a^b \phi(u_t(x))
		\left[ \sigma(u_t(x))\right]^{2/(\alpha-1)}\d x,
\end{split}\end{equation}
a.s.\ and in $L^2(\P)$. This is done via a series of 1-step reductions.
First, let us apply the Cauchy-Schwarz inequality to see that
\begin{equation}\begin{split}
	&\left\| \left| \phi( u_t (j \varepsilon)) - \phi(
		u_{t-\beta\varepsilon^\alpha}(j\varepsilon))\right|
		\times \left| (\nabla_\varepsilon u_t)((j+1)\varepsilon)
		\right|^{2/(\alpha-1)}\right\|_k\\
	&\le \sup_{x\in\R} \left\| \phi( u_t (x)) - \phi(
		u_{t-\beta\varepsilon^\alpha}(x)\right\|_{2k}\times
		\sup_{y\in\R}\left\| (\nabla_\varepsilon u_t)(y)
		\right\|_{4k/(\alpha-1)}^{2/(\alpha-1)}\\
	&\le\text{const}\cdot n^{16\delta/(\alpha-1)}2^{-n(1+\delta)},
\end{split}\end{equation}
thanks to \eqref{pro:Modulus:t} and Corollary \ref{cor:Modulus:x}.
Therefore, the Borel--Cantelli reduces our problem to verifying that,
almost surely and in $L^2(\P)$,
\begin{equation}\begin{split}
	&\lim_{n\to\infty} \sum_{a\le j\varepsilon \le b} 
		\phi( u_{t-\beta\varepsilon^\alpha} (j \varepsilon)) \left| (\nabla_\varepsilon u_t)((j+1)\varepsilon)
		\right|^{2/(\alpha-1)}\\
	&\hskip1.7in
		=\mathfrak{B}_\alpha \int_a^b \phi(u_t(x))
		\left[ \sigma(u_t(x))\right]^{2/(\alpha-1)}\d x.
\end{split}\end{equation}
Similarly, we apply \eqref{eq:localize:u:2} in order to reduce the problem further
to one about showing that, almost surely and in $L^2(\P)$,
\begin{equation}\begin{split}
	&\lim_{n\to\infty} \sum_{a\le j\varepsilon \le b} 
		\phi( u_{t-\beta\varepsilon^\alpha} (j \varepsilon)) 
		\left|\sigma(u_{t-\beta\varepsilon^\alpha} (j \varepsilon)) \right|^{2/(\alpha-1)}
		\left| \mathcal{I}_\varepsilon(j\varepsilon\,;t)\right|^{2/(\alpha-1)}\\
	&\hskip1.7in
		=\mathfrak{B}_\alpha \int_a^b \phi(u_t(x))
		\left[ \sigma(u_t(x))\right]^{2/(\alpha-1)}\d x.
\end{split}\end{equation}

Let us define the function
\begin{equation}
	\mathcal{H}(x) := \phi(x)\left|\sigma(x) \right|^{2/(\alpha-1)}
	\qquad(x\in\R),
\end{equation}
in order to simplify some of the typography. We emphasize once
again that:
\textbf{(i)} $\sup_{x\in\R}\sup_{t\in[0,T]}\E(|\mathcal{H}(u_t(x))|^k)<\infty$
for all $T>0$; and \textbf{(ii)} Our goal is to prove that,
a.s.\ and in $L^k(\P)$,
\begin{equation}\label{H:0}
	\lim_{n\to\infty} \sum_{a\le j\varepsilon \le b} 
	\mathcal{H}( u_{t-\beta\varepsilon^\alpha} (j \varepsilon))
	\left| \mathcal{I}_\varepsilon(j\varepsilon\,;t)\right|^{2/(\alpha-1)}
	=\mathfrak{B}_\alpha \int_a^b \mathcal{H}(u_t(x))\,\d x.
\end{equation}
Because $\mathcal{I}_\varepsilon(j\varepsilon\,;t)$ is independent of
$\{u_{t-\beta\varepsilon^\alpha}(i\varepsilon)\}_{i\le j}$,
\textbf{(i)} implies that
\begin{align}
	&\text{Var}\sum_{a\le j\varepsilon \le b} 
		\mathcal{H}( u_{t-\beta\varepsilon^\alpha} (j \varepsilon))
		\left| \mathcal{I}_\varepsilon(j\varepsilon\,;t)\right|^{2/(\alpha-1)}\\\notag
	&\le\text{const}\cdot\sum_{a\le j\varepsilon\le b}
		\text{Var}\left(\left| \mathcal{I}_\varepsilon(j\varepsilon\,;t)
		\right|^{2/(\alpha-1)}\right) \le\text{const}\cdot\sum_{a\le j\varepsilon\le b}
		\E\left(\left| \mathcal{I}_\varepsilon(j\varepsilon\,;t)
		\right|^{4/(\alpha-1)}\right),
\end{align}
uniformly for all $t\in[0\,,T]$ and $n\ge 1$ [whence also $\varepsilon\in(0\,,1)$
and $\beta>1$, as defined in \eqref{epsbeta}]. The
right-most quantity is $O(n^{-2})$; this is shown very much as \eqref{that2}
was, but the parameter $\beta$ has to now be adjusted.
In this way we reduce our problem \eqref{H:0}---%
thanks to the Borel--Cantelli lemma---to one
about proving  that, a.s.\ and in $L^2(\P)$,
\begin{equation}\label{H:1}
	\lim_{n\to\infty} \sum_{a\le j\varepsilon \le b} 
	\mathcal{H}( u_{t-\beta\varepsilon^\alpha} (j \varepsilon))
	\E\left(\left| \mathcal{I}_\varepsilon(j\varepsilon\,;t)\right|^{2/(\alpha-1)}\right)
	=\mathfrak{B}_\alpha \int_a^b \mathcal{H}(u_t(x))\,\d x.
\end{equation}
In accord with Proposition \ref{pr:localize:Z},
\begin{equation}
	\E\left( \left| (\nabla_\varepsilon Z_t)(x) - \mathcal{I}_\varepsilon(x\,;t)\right|^{2/
	(\alpha-1)}\right) \le\text{const}\cdot  n^{-2}2^{-n},
\end{equation}
uniformly for all $t\in[0\,,T]$ and $n\ge 1$.
Therefore, we may appeal to Corollary \ref{cor:Mean:Linear} in order to see
that
\begin{equation}
	\E\left(\left| \mathcal{I}_\varepsilon(j\varepsilon\,;t)\right|^{2/(\alpha-1)}\right) 
	= \mathfrak{B}_\alpha 2^{-n}(1+o(1))
	\qquad(n\to\infty).
\end{equation}
This and the continuity of the random function $(t\,,x)\mapsto u_t(x)$ together
show that \eqref{H:1} holds, thanks to a Riemann--sum approximation.
This completes our proof of the corollary.

\section{Comments on variance stabilization}\label{sec:VS}

Corollary \ref{cor:CLT} implies that if $\varepsilon$ is small, then
$(\nabla_\varepsilon u_t)(x)$ behaves as $\varepsilon^{(\alpha-1)/2}$
times $\sigma(u_t(x))Z$ where $Z$ is an independent standard normal 
random variable.
The aim of this section is to show how one
can apply a conditional form of what statisticians call a
``variance stabilizing transformation''  in order to simplify the preceding
into a bona fide central limit theorem with a Gaussian limit.

Let us first recall a classical definition that has also been recently adapted
to the potential theory of stochastic PDEs
\cite{DalangNualart}.
 
\begin{definition}
	A Borel set $E\subset\R$ is \emph{polar} for $u$ if
	\begin{equation}
		\P\left\{ u_t(x) \in E \text{ for some $t\ge 0$ and $x\in\R$}\right\}=0.
	\end{equation}
\end{definition}
Our definition of polar sets is slightly different from its progenitor in
probabilistic potential theory: Note that $t=0$ {\it is} included.
\begin{lemma}
	If $\sigma^{-1}\{0\}:=\{z\in\R:\ \sigma(z)=0\}$ 
	is polar for $u$, then the following is a continuous
	random field indexed by $\R_+\times\R$:
	\begin{equation}\label{X}
		X_t(x) := \int_{u_t(0)}^{u_t(x)}\frac{\d y}{\sigma(y)}
		\qquad(t\ge 0,\,x\in\R).
	\end{equation}
\end{lemma}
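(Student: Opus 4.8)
The plan is to work on a single almost-sure event and argue pointwise. First I would fix a jointly continuous version of $u$: combining the spatial modulus of Corollary~\ref{cor:Modulus:x} with the temporal modulus of Proposition~\ref{pro:Modulus:t} through the triangle inequality gives $\|u_t(x)-u_s(y)\|_k\le C(|x-y|^{(\alpha-1)/2}+|t-s|^{(\alpha-1)/(2\alpha)})$, and the two-parameter Kolmogorov continuity theorem then furnishes a modification of $(t,x)\mapsto u_t(x)$ with continuous trajectories on $\R_+\times\R$; I fix such a version throughout. The polarity hypothesis supplies the second ingredient: by the very definition of polarity, the event
\[
	\Omega_0:=\left\{ u_t(x)\notin\sigma^{-1}\{0\}\text{ for every }t\ge0\text{ and }x\in\R\right\}
\]
has $\P(\Omega_0)=1$, and I will show that $X$ is well defined and continuous on all of $\Omega_0$.

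Next I would check well-definedness. Fix $\omega\in\Omega_0$ and a point $(t,x)$; assume $x\ge0$, the opposite case being identical. Since $s\mapsto u_t(s)$ is continuous, it carries $[0,x]$ onto a compact interval $J$ that contains both $u_t(0)$ and $u_t(x)$, and hence the entire segment joining them. Every point of $J$ is a value of the field $u$, so on $\Omega_0$ we have $J\cap\sigma^{-1}\{0\}=\emptyset$; as $\sigma$ is continuous and $J$ is compact, $\inf_{y\in J}|\sigma(y)|>0$, and $1/\sigma$ is therefore bounded and continuous along the segment, making the integral in \eqref{X} finite. I expect this to be the main obstacle: polarity only forbids $u$ from \emph{taking values} in $\sigma^{-1}\{0\}$, whereas the integral runs over an entire segment, so the passage from ``endpoints avoid the zero set'' to ``the segment avoids the zero set'' must be extracted from the intermediate-value property of the continuous spatial section $s\mapsto u_t(s)$.

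Finally I would prove continuity at an arbitrary $(t_0,x_0)$. Put $m_0:=u_{t_0}(0)\wedge u_{t_0}(x_0)$ and $M_0:=u_{t_0}(0)\vee u_{t_0}(x_0)$; by the previous step $[m_0,M_0]$ avoids $\sigma^{-1}\{0\}$, so continuity of $\sigma$ yields $\eta>0$ and $\delta>0$ with $|\sigma|\ge\delta$ on $I:=[m_0-\eta,M_0+\eta]$. The antiderivative $G(z):=\int_{m_0}^z\d y/\sigma(y)$ is then well defined and Lipschitz on $I$ with constant $\delta^{-1}$. Joint continuity of $u$ guarantees that $u_t(0)$ and $u_t(x)$ both lie in $I$ once $(t,x)$ is close enough to $(t_0,x_0)$, whence
\[
	X_t(x)=G(u_t(x))-G(u_t(0)),
\]
and consequently
\[
	|X_t(x)-X_{t_0}(x_0)|\le\delta^{-1}\big(|u_t(x)-u_{t_0}(x_0)|+|u_t(0)-u_{t_0}(0)|\big).
\]
The right-hand side tends to $0$ as $(t,x)\to(t_0,x_0)$, once more by joint continuity of $u$. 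Since $(t_0,x_0)$ is arbitrary and $\P(\Omega_0)=1$, this would complete the proof.
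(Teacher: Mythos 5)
Your proof is correct and follows essentially the same route as the paper: both arguments hinge on the intermediate value theorem applied to the continuous spatial section $s\mapsto u_t(s)$ to upgrade polarity of the endpoints to the statement that the whole segment $J(t,x)$ joining $u_t(0)$ and $u_t(x)$ avoids $\sigma^{-1}\{0\}$, and then use compactness and continuity of $\sigma$ to get $\inf_{y\in J(t,x)}|\sigma(y)|>0$ almost surely. The paper dismisses the remaining well-definedness and continuity of $X$ as ``easy''; your local Lipschitz antiderivative argument is a correct filling-in of exactly that step.
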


\begin{proof}
	Recall that $u$ is a continuous random function of $(t\,,x)\in\R_+\times\R$.
	Define $J(t\,,x)$ as the closed random subinterval of $\R$ whose endpoints are
	$u_t(0)$ and $u_t(x)$.
	Since $\sigma^{-1}\{0\}$ is polar for $u$, there is a $\P$-null set off which
	$J(t\,,x)\cap\sigma^{-1}\{0\}=\varnothing$ simultaneously for all $x\in\R$
	and $t\ge 0$; this holds because of the mean value theorem.  Therefore,
	the continuity of $\sigma$ ensures that, in addition, $\inf_{y\in J(t\,,x)}|\sigma(y)|>0$
	a.s. The rest of the proof is easy.
\end{proof}

Let $X$ denote the random field that was defined in \eqref{X}. If
$\sigma^{-1}\{0\}$ were polar for $u$, then $X$ would be well defined and
\begin{equation}
	X_t(x) - X_t(x-\varepsilon) = \frac{u_t(x)-u_t(x-\varepsilon)}{\sigma(u_t(x))}+
	O\left(\left| u_t(x)-u_t(x-\varepsilon)\right|^2\right),
\end{equation}
uniformly for every $x\in[a\,,b]$ and $t\in[0\,,T]$. This observation
and Theorem \ref{th:Differentiate} together yield the following.

\begin{theorem}\label{th:X:Differentiate}
	If $\sigma^{-1}\{0\}$ is polar for $u$,
	then for all $t>0$ and $x\in\R$: (i) 
	\begin{equation}
		\lim_{\varepsilon\downarrow0}\sup_{y\in\R}\P\left\{\left|
		\frac{X_t(y)-X_t(y-\varepsilon)}{F(y)-F(y-\varepsilon)}-\mathfrak{A}_\alpha
		\right|>\lambda\right\}=0\qquad\text{for all $\lambda>0$};
	\end{equation}
	(ii)	With probability one,
	\begin{equation}
		\limsup_{\varepsilon\downarrow0}\frac{X_t(x)-X_t(x-\varepsilon)}{\sqrt{2
		\varepsilon^{\alpha-1}\log\log(1/\varepsilon)}}= - \liminf_{\varepsilon\downarrow0}
		\frac{X_t(x)-X_t(x-\varepsilon)}{\sqrt{2
		\varepsilon^{\alpha-1}\log\log(1/\varepsilon)}}=\mathfrak{A}_\alpha;
	\end{equation}
	(iii) For all $a\in\R$,
	\begin{equation}
		\lim_{\varepsilon\downarrow0}\P\left\{
		\frac{X_t(x)-X_t(x-\varepsilon)}{\varepsilon^{(\alpha-1)/2}}\le a\right\}
		=\P\left\{ \mathcal{N} \le a/\mathfrak{A}_\alpha\right\},
	\end{equation}
	where $\mathcal{N}$ denotes a standard Gaussian random variables; and
	(iv) For all Lipschitz-continuous functions $\varphi:\R\to\R$ and $b>a$ and $t>0$,
	all non random, we have
	\begin{equation}\begin{split}
		&\sum_{a2^n\le j\le 2^nb}\varphi(X_t(j2^{-n}))\left[
			X_t((j+1)2^{-n}) - X_t(j2^{-n})\right]^{2/(\alpha-1)}\\
		&\hskip2.5in=\mathfrak{B}_\alpha\int_a^b\varphi(X_t(x))\,\d x,
	\end{split}\end{equation}
	almost surely and in $L^2(\P)$.
\end{theorem}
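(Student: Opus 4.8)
The plan is to deduce all four assertions from their established counterparts for $u$ by exploiting a single first-order identity for the stabilizing transformation. The starting point is the expansion recorded just before the theorem: off a single $\P$-null set, and uniformly for $x$ in a fixed compact interval and $t\in[0\,,T]$,
\begin{equation}\label{eq:stab}
	(\nabla_\varepsilon X_t)(x) = \frac{(\nabla_\varepsilon u_t)(x)}{\sigma(u_t(x))}
	+ O\left(\left|(\nabla_\varepsilon u_t)(x)\right|^2\right)\qquad(\varepsilon\downarrow0).
\end{equation}
The polarity hypothesis is exactly what renders \eqref{eq:stab} legitimate: just as in the lemma preceding the theorem, the mean value theorem shows that, off one null set, $\inf_{y\in J(t,x)}|\sigma(y)|>0$ simultaneously over the compact interval, so that $1/\sigma(u_t(x))$ is a.s.\ bounded there and the remainder is genuinely controlled. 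The one quantitative fact I would use throughout is that, by Corollary \ref{cor:Modulus:x}, $(\nabla_\varepsilon u_t)(x)$ is of order $\varepsilon^{(\alpha-1)/2}$, so the remainder in \eqref{eq:stab} is of order $\varepsilon^{\alpha-1}=o(\varepsilon^{(\alpha-1)/2})$, \emph{strictly} smaller than the principal term because $\alpha>1$.

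For part (i) I would divide \eqref{eq:stab} by $(\nabla_\varepsilon F)(x)$ and write
\begin{equation}
	\frac{(\nabla_\varepsilon X_t)(x)}{(\nabla_\varepsilon F)(x)} - \mathfrak{A}_\alpha
	= \frac{1}{\sigma(u_t(x))}\left(
	\frac{(\nabla_\varepsilon u_t)(x)}{(\nabla_\varepsilon F)(x)}
	-\mathfrak{A}_\alpha\sigma(u_t(x))\right)
	+ \frac{O\left(|(\nabla_\varepsilon u_t)(x)|^2\right)}{(\nabla_\varepsilon F)(x)}.
\end{equation}
The first term tends to $0$ in probability, uniformly in $x$, by Theorem \ref{th:Differentiate} together with the a.s.\ positivity of $|\sigma(u_t(x))|$; the second equals $O(|(\nabla_\varepsilon u_t)(x)|)$ times the ratio $|(\nabla_\varepsilon u_t)(x)|/|(\nabla_\varepsilon F)(x)|$, which is bounded in probability (it converges to $\mathfrak{A}_\alpha|\sigma(u_t(x))|$), so the product vanishes since the prefactor does.

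Parts (ii) and (iii) are the same substitution read off against different normalizers. For (ii), dividing \eqref{eq:stab} by $\sqrt{2\varepsilon^{\alpha-1}\log\log(1/\varepsilon)}$ turns Corollary \ref{cor:LIL} into the claim: the principal term contributes $\mathfrak{A}_\alpha|\sigma(u_t(x))|/\sigma(u_t(x))$, and a short sign check shows that dividing by $\sigma(u_t(x))$ of either sign interchanges $\limsup$ and $\liminf$ exactly so as to yield $+\mathfrak{A}_\alpha$ and $-\mathfrak{A}_\alpha$, while the quadratic remainder is $O(\sqrt{\varepsilon^{\alpha-1}\log\log(1/\varepsilon)})=o(1)$ by the law of the iterated logarithm itself. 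For (iii), Corollary \ref{cor:CLT} gives $\varepsilon^{-(\alpha-1)/2}(\nabla_\varepsilon u_t)(x)\Rightarrow\mathfrak{A}_\alpha|\sigma(u_t(x))|\mathcal N$ with $\mathcal N$ independent of $u_t(x)$; dividing by $\sigma(u_t(x))$ produces $\mathfrak{A}_\alpha\,\mathrm{sgn}(\sigma(u_t(x)))\mathcal N$, which has the law of $\mathfrak{A}_\alpha\mathcal N$ by the symmetry and independence of $\mathcal N$, and the remainder is $o(1)$ in probability, so Slutsky's theorem finishes the weak limit.

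Part (iv) is the assertion I expect to cost the most, and I would not route it through Corollary \ref{cor:VAR} directly, since the natural test function carrying a random $1/|\sigma|^{2/(\alpha-1)}$ factor sits awkwardly in the Lipschitz framework. Instead I would re-run the proof of Corollary \ref{cor:VAR} from the localized representation \eqref{eq:localize:u:2}, $(\nabla_\varepsilon u_t)(x)\approx\sigma(u_{t-\beta\varepsilon^\alpha}(x))\,\mathcal{I}_\varepsilon(x\,;t)$. Combined with \eqref{eq:stab} and the temporal continuity $\sigma(u_{t-\beta\varepsilon^\alpha}(x))\to\sigma(u_t(x))$, this yields the decisive cancellation $(\nabla_\varepsilon X_t)(x)\approx\mathcal{I}_\varepsilon(x\,;t)$, with $\sigma$ now entirely absent. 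After replacing $\varphi(X_t(j\varepsilon))$ by $\varphi(X_{t-\beta\varepsilon^\alpha}(j\varepsilon))$---legitimate by the temporal continuity of $X$ and a Borel--Cantelli step---the factor in front is measurable with respect to the white noise before time $t-\beta\varepsilon^\alpha$, hence independent of $\mathcal{I}_\varepsilon(j\varepsilon\,;t)$. The variation sum thus becomes $\sum_j\varphi(X_{t-\beta\varepsilon^\alpha}(j\varepsilon))|\mathcal{I}_\varepsilon(j\varepsilon\,;t)|^{2/(\alpha-1)}$, and the verbatim variance/Borel--Cantelli reduction of that proof---using $\E(|\mathcal{I}_\varepsilon(j\varepsilon\,;t)|^{2/(\alpha-1)})=\mathfrak{B}_\alpha 2^{-n}(1+o(1))$ from Corollary \ref{cor:Mean:Linear} and Proposition \ref{pr:localize:Z}, together with the sample continuity of $x\mapsto\varphi(X_t(x))$---delivers $\mathfrak{B}_\alpha\int_a^b\varphi(X_t(x))\,\d x$. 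The disappearance of the weight $[\sigma(u_t(x))]^{2/(\alpha-1)}$ present in Corollary \ref{cor:VAR} is precisely the payoff of the variance stabilization. The delicate point is checking that $\varphi\circ X$ has the moment and continuity properties the argument needs; this is where polarity re-enters, since it keeps the relevant range of $u$ away from $\sigma^{-1}\{0\}$ and hence keeps the antiderivative of $1/\sigma$ smooth there.
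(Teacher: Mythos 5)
Your proposal is correct and, for parts (i)--(iii), is essentially the paper's own argument: the paper proves the theorem by exactly the first-order expansion you start from (the display just before the theorem, justified by polarity through the mean value theorem, uniformly on compact intervals), and then treats (i)--(iii) as immediate consequences of Theorem \ref{th:Differentiate} and Corollaries \ref{cor:LIL} and \ref{cor:CLT}; your sign bookkeeping in (ii), the Slutsky step in (iii), and the estimates showing the quadratic remainder is negligible against each normalizer are precisely the details the paper suppresses. Where you genuinely depart from the paper is part (iv): the paper offers no proof at all (``a real-variable argument that we leave to the interested reader''), whereas you re-run the proof of Corollary \ref{cor:VAR} starting from the localized representation \eqref{eq:localize:u:2}, so that the factor $\sigma(u_{t-\beta\varepsilon^\alpha})$ cancels against the $1/\sigma(u_t)$ produced by the stabilizing transformation, the variation sum collapses to $\sum_j\varphi(X_{t-\beta\varepsilon^\alpha}(j\varepsilon))\left|\mathcal{I}_\varepsilon(j\varepsilon\,;t)\right|^{2/(\alpha-1)}$, and the variance/Borel--Cantelli machinery together with Corollary \ref{cor:Mean:Linear} and Proposition \ref{pr:localize:Z} finishes the computation. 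This is a sensible completion of what the paper omits, and you correctly diagnose why Corollary \ref{cor:VAR} cannot be cited as a black box: the natural weight $|\sigma|^{-2/(\alpha-1)}$ is random and falls outside the admissible Lipschitz test functions. Two caveats, both inherited from the paper rather than introduced by you: the expansion is uniform only on compact $x$-intervals and $1/|\sigma(u_t(y))|$ is only a.s.\ finite, so the $\sup_{y\in\R}$ in (i) strictly requires a uniform-in-$y$ tightness of $1/|\sigma(u_t(y))|$ that neither you nor the paper establishes; and the $L^2(\P)$ half of (iv) needs moment bounds on $\varphi(X_t(\cdot))$, i.e.\ on $1/\sigma$ along the range of $u$, which a.s.\ finiteness alone does not give, so your quantitative replacement steps should be carried out after localizing on events of the form $\left\{\inf_{x\in[a,b]}|\sigma(u_t(x))|>\delta\right\}$, whose probabilities tend to one by polarity, continuity, and compactness.
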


Part  (iv) of the preceding requires a real-variable argument that we leave
to the interested reader. The rest are immediate corollaries of Theorem \ref{th:Differentiate}.

For an interesting example, let us consider the ``parabolic Anderson model 
for the Laplacian, driven by space-time white noise,''
$$
	\frac{\partial}{\partial t} u_t(x) = \frac{\partial^2}{\partial x^2}u_t(x) + u_t(x)\xi.
	\eqno{(\text{PAM})}
$$
 That is, (PAM) is the specialization of
the stochastic heat equation (SHE) to the case that 
$\alpha=2$ and $\sigma(x):= x$.
In this case, Mueller (see \cite{Mueller} for a related result,
and \cite[Theorem 5.1, p.\ 130]{Minicourse} for the one that is applicable here) 
has proved that, if in addition $u_0(x)>0$ for all $x\in\R$, then
$\sigma^{-1}\{0\}=\{0\}$ is polar for $u$. The process $X$ is thus well defined,
and is given by
\begin{equation}
	X_t(x) = h_t(x) - h_t(0)\qquad(t\ge 0,\,x\in\R),
\end{equation}
where
\begin{equation}
	h_t(x) := \log u_t(x)\qquad(t\ge 0,\,x\in\R)
\end{equation}
is the so-called Hopf--Cole solution to the KPZ equation \cite{KPZ},
$$
	\frac{\partial}{\partial t} h_t(x) = \frac{\partial^2}{\partial x^2} h_t(x) 
	+\left( \frac{\partial}{\partial x}h_t(x)\right)^2 + \xi.
	\eqno{(\text{KPZ})}
$$
[This is an ill-posed stochastic PDE.]
Since fBm($\nicefrac12$) is standard Brownian motion,
$\mathfrak{A}_2=1/\sqrt 2$, and $\mathfrak{B}_2=1/2$
[see \eqref{A(alpha)} and \eqref{B(alpha)}], we arrive at the following
ready consequence of Theorem \ref{th:X:Differentiate}.

\begin{corollary}\label{cor:KPZ}
	Consider the Hopf--Cole solution $h$ to \textnormal{(KPZ)}, 
	subject to an initial profile $h_0:\R\to\R$ that is non random, 
	uniformly bounded from above,
	and Lipschitz continuous. Then, $h$
	satisfies the following for all $t>0$ and $x\in\R$:
	(i) There exists a Brownian motion $\{B(y)\}_{y\in\R}$ such that
		\begin{equation}
			\lim_{\varepsilon\downarrow0}\sup_{y\in\R}\P\left\{\left|
			\frac{h_t(y)-h_t(y-\varepsilon)}{B(y)-B(y-\varepsilon)}- \frac{1}{\sqrt 2}
			\right|>\lambda\right\}=0\qquad\text{for all $\lambda>0$};
		\end{equation}
	\begin{enumerate}
		\item[(ii)] With probability one,
		\begin{equation}
			\limsup_{\varepsilon\downarrow0}\frac{h_t(x)-h_t(x-\varepsilon)}{%
			\sqrt{\varepsilon\log\log(1/\varepsilon)}}= - \liminf_{\varepsilon\downarrow0}
			\frac{h_t(x)-h_t(x-\varepsilon)}{\sqrt{
			\varepsilon\log\log(1/\varepsilon)}}=1;
		\end{equation}
		\item[(iii)] For all $a\in\R$,
		\begin{equation}
			\lim_{\varepsilon\downarrow0}\P\left\{
			\frac{h_t(x)-h_t(x-\varepsilon)}{\sqrt{\varepsilon/2}}\le a\right\}
			=\frac{1}{\sqrt{2\pi}}\int_{-\infty}^a\exp(-x^2/2)\,\d x;
		\end{equation}
		\item[(iv)] For all Lipschitz-continuous functions $\varphi:\R\to\R$ and $b>a$ and $t>0$,
		all non random, the following holds almost surely and in $L^2(\P)$:
		\begin{align}
			&\sum_{a2^n\le j\le 2^nb}\varphi(h_t(j2^{-n}))\left[
				h_t((j+1)2^{-n}) - h_t(j2^{-n})\right]^2\\\notag
			&\hskip3in= \frac12\int_a^b\varphi(h_t(x))\,\d x.
		\end{align}
	\end{enumerate}
\end{corollary}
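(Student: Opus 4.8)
The plan is to obtain all four assertions by specializing the machinery already in place to $\alpha=2$ and $\sigma(x)=x$, at which point (PAM) is exactly the instance of (SHE) under study. First I would verify the hypotheses: with $u_0:=\e^{h_0}$, the assumption that $h_0$ is non random, Lipschitz, and bounded above makes $u_0$ non random, strictly positive, and bounded, so the standing assumptions on (SHE) hold. Because $u_0>0$ everywhere, Mueller's theorem (\cite{Mueller}; see \cite[Theorem 5.1]{Minicourse}) shows $\sigma^{-1}\{0\}=\{0\}$ is polar for $u$, so the field $X$ of \eqref{X} is well defined; and since $\int_{u_t(0)}^{u_t(x)}\d y/y=\log u_t(x)-\log u_t(0)$ we get $X_t(x)=h_t(x)-h_t(0)$. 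I would then record the numerics $\mathfrak{A}_2=1/\sqrt2$ and $\mathfrak{B}_2=\nicefrac12$, and that fBm$(\nicefrac12)$ is a Brownian motion, which I name $B$.

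Parts (i)--(iii) rest on a single observation: $X_t$ and $h_t$ differ by the $x$-independent random constant $h_t(0)$, so $(\nabla_\varepsilon X_t)(x)=(\nabla_\varepsilon h_t)(x)$. Part (i) is then immediate from Theorem~\ref{th:X:Differentiate}(i) with $F=B$ and constant $\mathfrak{A}_2=1/\sqrt2$. Parts (ii) and (iii) follow from the corresponding parts of the theorem after a one-line rescaling of the normalizing denominator: dividing the central-limit statement by $\mathfrak{A}_2$ converts the denominator $\varepsilon^{1/2}$ into $\sqrt{\varepsilon/2}$ and the limit $\mathfrak{A}_2\mathcal N$ into a standard normal, and the same bookkeeping turns the iterated-logarithm constant $\mathfrak{A}_2$ with denominator $\sqrt{2\varepsilon\log\log(1/\varepsilon)}$ into the constant $1$ with denominator $\sqrt{\varepsilon\log\log(1/\varepsilon)}$.

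Part (iv) is where the real work lies, and I would not route it through Theorem~\ref{th:X:Differentiate}(iv): that statement composes $\varphi$ with $X_t$, and rewriting $\varphi(X_t(\cdot))=\varphi(h_t(\cdot)-h_t(0))$ would force an application of the theorem with the \emph{random} shift $h_t(0)$, which the ``for each fixed $\varphi$, a.s.'' formulation does not license. Instead I would return to Corollary~\ref{cor:VAR}, which with $\alpha=2$, $\sigma(x)=x$, exponent $2/(\alpha-1)=2$, and $\mathfrak{B}_2=\nicefrac12$ yields $\sum_{a2^n\le j\le b2^n}\phi(u_t(j2^{-n}))|(\nabla_\varepsilon u_t)((j+1)2^{-n})|^2\to\frac12\int_a^b\phi(u_t(x))\,u_t(x)^2\,\d x$. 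The passage from $u$ to $h=\log u$ is a Taylor expansion: on $[a\,,b]$ the solution is continuous and, by polarity, bounded away from $0$, so its increments vanish uniformly and
\begin{equation}
	\left[ h_t((j+1)2^{-n})-h_t(j2^{-n})\right]^2
	=\frac{\left|(\nabla_\varepsilon u_t)((j+1)2^{-n})\right|^2}{u_t(j2^{-n})^2}(1+o(1)),
\end{equation}
uniformly in $j$. Taking the test function $\phi(y):=\varphi(\log y)/y^2$ makes $\phi(u_t(j2^{-n}))\,u_t(j2^{-n})^2=\varphi(h_t(j2^{-n}))$ and $\phi(u_t(x))\,u_t(x)^2=\varphi(h_t(x))$, so Corollary~\ref{cor:VAR} produces exactly the limit $\frac12\int_a^b\varphi(h_t(x))\,\d x$.

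The hard part will be the two legitimacy checks hidden in that last step. First, $\phi(y)=\varphi(\log y)/y^2$ is not globally Lipschitz, since it blows up as $y\downarrow0$, so it cannot be fed directly into Corollary~\ref{cor:VAR}; I would localize, taking a fixed sequence $\delta_m\downarrow0$ and Lipschitz functions $\phi_m$ agreeing with $\phi$ on $[\delta_m\,,1/\delta_m]$, applying Corollary~\ref{cor:VAR} to each $\phi_m$, and intersecting the countably many null sets, so that on one good event---after choosing $m$ large enough that the compact range $u_t([a\,,b])$ sits inside $(\delta_m\,,1/\delta_m)$---the grid values and the integrand agree for $\phi$ and $\phi_m$. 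Second, I must check that the summed $o(1)$ error is negligible; this holds because $\sum_j|(\nabla_\varepsilon u_t)((j+1)2^{-n})|^2$ stays bounded (it converges by Corollary~\ref{cor:VAR} with $\phi\equiv1$) while $\sup_j|(\nabla_\varepsilon u_t)((j+1)2^{-n})|\to0$ by uniform continuity. These are precisely the ``real-variable'' details deferred to the reader, and both the almost-sure and the $L^2(\P)$ modes of convergence are inherited from Corollary~\ref{cor:VAR}.
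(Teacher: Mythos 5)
Your setup and your handling of parts (i)--(iii) coincide with the paper's own argument: the paper likewise passes to $u_0=\e^{h_0}$ (bounded, strictly positive), invokes Mueller's theorem to make $\sigma^{-1}\{0\}=\{0\}$ polar, identifies $X_t(x)=h_t(x)-h_t(0)$, records $\mathfrak{A}_2=1/\sqrt 2$ and $\mathfrak{B}_2=1/2$, and reads (i)--(iii) off Theorem \ref{th:X:Differentiate}, using that $h_t$ and $X_t$ have identical increments.

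For part (iv) you take a genuinely different, and more careful, route. The paper obtains (iv) by citing Theorem \ref{th:X:Differentiate}(iv)---whose proof it explicitly defers to the reader as a ``real-variable argument''---and it never addresses the point you raise: since $\varphi(h_t(\cdot))=\varphi(X_t(\cdot)+h_t(0))$, a literal application of that theorem would require the test function $y\mapsto\varphi(y+h_t(0))$, which is random, while the theorem's exceptional null set is allowed to depend on the (non-random) test function. Your objection is correct, though not fatal to the paper's route: one can apply the theorem to $\varphi(\cdot+c)$ for all rational $c$, intersect the countably many null sets, and use the a.s.\ boundedness in $n$ of the quadratic sums (the case $\varphi\equiv 1$) together with $\lip_\varphi<\infty$ to pass to the random shift $c=h_t(0)$. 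Your alternative---returning to Corollary \ref{cor:VAR} for $u$ itself with the test function $\phi(y)=\varphi(\log y)/y^2$, localized to Lipschitz truncations $\phi_m$, plus a Taylor expansion of $\log$---is sound, and it buys a self-contained proof of (iv) in place of a citation to an unproved statement; the two checks you isolate (localization away from $y=0$, and negligibility of the summed $o(1)$ error via uniform continuity and boundedness of $\sum_j|u_t((j+1)2^{-n})-u_t(j2^{-n})|^2$) are exactly the content of the argument the paper omits. The one soft spot, which you inherit from the paper rather than introduce, is the $L^2(\P)$ mode of convergence: your error terms are controlled by random quantities such as $1/\inf_{[a,b+1]}u_t$ and $\sup_j|\varphi(h_t(j2^{-n}))|$, so upgrading the almost-sure statement to $L^2(\P)$ requires uniform integrability, hence negative moments of the (PAM) solution, which the paper's toolkit does not supply; saying the $L^2$ convergence is ``inherited'' from Corollary \ref{cor:VAR} glosses over this, just as the paper's deferral does.
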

\begin{small}

\noindent\\[2mm]

\noindent\textbf{Mohammud Foondun}.
\noindent School of Mathematics, Loughborough University, Leicestershire, UK,
LE11 3TU, {\tt m.i.foondun@lboro.ac.uk}\\

\noindent\textbf{D. Khoshnevisan \&\ P. Mahboubi}.
\noindent Department of Mathematics, The University of Utah,
155 South 1400 East, JWB 233, Salt Lake City, Utah 84105--0090,
{\tt davar@math.utah.edu} \&\ {\tt pejman@math.ucla.edu}

\end{small}

\end{document}